\newtheorem{lemma}{Lemma}[section]
\newtheorem{theorem}{Theorem}[section]
\newtheorem{proposition}{Proposition}[section]
\newtheorem{remark}{Remark}[section]
\numberwithin{equation}{section}
\newcommand{\dis}{\displaystyle}
\newcommand{\rmi}{{\mathrm i}}
\newcommand{\rmre}{{\rm Re}}
\newcommand{\R}{\mathbb{R}}
\newcommand{\Z}{\mathbb{Z}}
\renewcommand{\S}{\mathbb{S}}
\newcommand{\T}{\mathbb{T}}
\newcommand{\FP}{\mathbf{P}}
\newcommand{\CE}{\mathcal{E}}
\newcommand{\ep}{\epsilon}
\newcommand{\na}{\nabla}
\newcommand{\al}{\alpha}
\newcommand{\be}{\beta}
\newcommand{\om}{\omega}
\newcommand{\la}{\lambda}
\newcommand{\de}{\delta}
\newcommand{\si}{\sigma}
\newcommand{\pa}{\partial}
\newcommand{\ka}{\kappa}
\newcommand{\eps}{\epsilon}
\newcommand{\De}{\Delta}
\newcommand{\Ga}{\Gamma}
\begin{document}
\title[Boltzmann equation modelling polyatomic gas]{Global bounded solutions to the Boltzmann equation for a polyatomic gas}

\author[R.-J. Duan]{Renjun Duan}
\address[RJD]{Department of Mathematics, The Chinese University of Hong Kong,
	Shatin, Hong Kong, P.R.~China}
\email{rjduan@math.cuhk.edu.hk}

\author[Z.-G. Li]{Zongguang Li}
\address[ZGL]{Department of Mathematics, The Chinese University of Hong Kong,
Shatin, Hong Kong, P.R.~China}
\email{zgli@math.cuhk.edu.hk}

\begin{abstract}
In this paper we consider the Boltzmann equation modelling the motion of a polyatomic gas where the integration collision operator in comparison with the classical one involves an additional internal energy variable $I\in\R_+$ and a parameter $\de\geq 2$ standing for the degree of freedom. In perturbation framework, we establish the global well-posedness  for bounded mild solutions near global equilibria on torus. The proof is based on the $L^2\cap L^\infty$ approach. Precisely, we first study the $L^2$ decay property for the linearized equation, then use the iteration technique for the linear integral operator to get the linear weighted $L^\infty$ decay, and in the end obtain $L^\infty$ bounds as well as exponential time decay of solutions for the nonlinear problem with the help of the Duhamel's principle. Throughout the proof, we present a careful analysis for treating the extra effect of internal energy variable $I$ and the parameter $\delta$.
\end{abstract}


\subjclass[2020]{35Q20, 35B35}


\keywords{Boltzmann equation, polyatomic gas, global bounded solution, exponential stability}
\maketitle
\thispagestyle{empty}

\tableofcontents

\section{Introduction}
The classical Boltzmann equation is a fundamental mathematical model in collisional kinetic theory which describes the motion of a rarefied monatomic gas in non-equilibrium states. In comparison with the classical case, the corresponding models for the polyatomic gas have wider applications in engineering. In the situation where the internal energy is introduced by a continuous variable, initiated by \cite{BL}, Bourgat,  Desvillettes, Le Tallec and Perthame \cite{BDTP} and later Desvillettes \cite{Desvillettes} derived general models for polyatomic gases including the so-called Borgnakke-Larsen type model. Specifically, those works gave a precise description on how the degrees of freedom enters into the Boltzmann collision operation, showed that the collision at the molecule level is reversible and symmetric, and also proved the $H$-theorem. For a mixture of reactive gases, Desvillettes, Monaco and Salvarani \cite{DMS} proposed a kinetic model which can describe chemical reversible reactions. See also \cite{Berroir,Bird,MBKK,Pullin,Kuscer} and the references therein for more original models. 

In the paper, we consider the following Cauchy problem on the Boltzmann equation modelling the motion of a polyatomic gas
 \begin{eqnarray}\label{BE}
&\dis \pa_tF+v\cdot \na_x F=Q(F,F),   \quad &\dis F(0,x,v,I)=F_0(x,v,I),
\end{eqnarray}
where $F(t,x,v,I)\geq0$ is an unknown velocity  distribution function of particles with position $x\in \Omega=\T^3$, velocity $v\in \R^3$ and internal energy $I\in (0,\infty)$ at time $t> 0$ and initial data $F_0(x,v,I)$ is given. The collision operator $Q(F,G)$ that is local in $(t,x)$ takes the form of
\begin{align}\label{DefQ}
	Q(F,G)&=\int_{(\R^3)^3\times (\R_+)^3} W(v,v_*,I,I_*|v^\prime,v^\prime_*,I^\prime,I^\prime_*)\left(\frac{F^\prime G^\prime_*}{(I^\prime I^\prime_*)^{\de/2-1}}-\frac{FG_*}{(II_*)^{\de/2-1}}\right)dv_*dv^\prime dv^\prime_* dI_*dI^\prime dI^\prime_*\notag\\
	&=Q_+(F,F)-Q_-(F,F),
\end{align}
where $\de\in [2,\infty)$ is the internal degrees of freedom, $G_*=G(t,x,v_*,I_*)$, $F^\prime=F(t,x,v^\prime,I^\prime)$ and $G^\prime_*=G(t,x,v^\prime_*,I^\prime_*)$. Denoting $\de_3$ and $\de_1$ to be the Dirac's delta function in $\R^3$ and $\R$ respectively, we assume $W(v,v_*,I,I_*|v^\prime,v^\prime_*,I^\prime,I^\prime_*)$ satisfies 
\begin{multline}\label{DefW}
	W(v,v_*,I,I_*|v^\prime,v^\prime_*,I^\prime,I^\prime_*)=4(II_*)^{\de/2-1}\si(|v-v_*|,\big|\frac{v-v_*}{|v-v_*|}\cdot\frac{v^\prime-v^\prime_*}{|v^\prime-v^\prime_*|}\big|,I,I_*,I^\prime,I^\prime_*)\frac{|v-v_*|}{|v^\prime-v^\prime_*|}\\
	\times\de_3(v+v_*-v^\prime-v^\prime_*)\de_1(\frac{|v|^2}{2}+\frac{|v_*|^2}{2}+I+I_*-\frac{|v^\prime|^2}{2}-\frac{|v^\prime_*|^2}{2}-I^\prime-I^\prime_*),
\end{multline}
and
\begin{align}\label{ProW}
	&W(v,v_*,I,I_*|v^\prime,v^\prime_*,I^\prime,I^\prime_*)=W(v_*,v,I_*,I|v^\prime_*,v^\prime,I^\prime_*,I^\prime)\notag\\
	&W(v,v_*,I,I_*|v^\prime,v^\prime_*,I^\prime,I^\prime_*)=W(v^\prime,v^\prime_*,I^\prime,I^\prime_*|v,v_*,I,I_*)\notag\\
	&W(v,v_*,I,I_*|v^\prime,v^\prime_*,I^\prime,I^\prime_*)=W(v,v_*,I,I_*|v^\prime_*,v^\prime,I^\prime_*,I^\prime).
\end{align}
For convenience, we denote $u=v-v_*$, $u^\prime=v^\prime-v^\prime_*$, $\Phi=\frac{|u|^2}{4}+I+I_*$ and $\Phi^\prime=\frac{|u^\prime|^2}{4}+I^\prime+I^\prime_*$. Throughout the paper, we consider hard potentials with cut-off like models proposed in \cite{Bernhoff} which takes the form 
\begin{align}\label{si}
	\si=C\frac{\sqrt{|u|^2+4I+4I_*-4I^\prime-4I^\prime_*}}{|u|\Phi^{\de+(\al-1)/2}}(I^\prime I^\prime_*)^{\de/2-1},
\end{align} 
where $C$ is a constant and $\al\in[0,2]$. In fact, all results still hold if $\si$ is equivalent to the right hand side of \eqref{si}. Throughout the paper, we restrict to only the assumption \eqref{si} for convenience. If we make changes of variables $R=\frac{|v^\prime-v^\prime_*|}{4\Phi}$, $r=\frac{I^\prime}{(1-R)\Phi}$ and $\Phi^\prime=\frac{|u^\prime|^2}{4}+I^\prime+I^\prime_*$, we deduce from the definition of $Q$ \eqref{DefQ} that
\begin{align*}
	Q(F,G)&=\int_{\S^2\times [0,1]^2\times \R^3\times \R_+} B\left(\frac{F^\prime G^\prime_*}{(I^\prime I^\prime_*)^{\de/2-1}}-\frac{FG_*}{(II_*)^{\de/2-1}}\right)\notag\\
	&\qquad\times(r(1-r))^{\de/2-1}(1-R)^{\de-1}R^{1/2}(II_*)^{\de/2-1}d\omega dR dr dv^\prime_* dI^\prime_*,
\end{align*}
where
$$
B=C\,\Phi^{1-\al/2}=C\left(\frac{|v-v_*|^2}{4}+I+I_*\right)^{1-\al/2}.
$$
One may refer to \cite{Bernhoff} for detailed deduction.

According to \cite{Bernhoff}, the global equilibrium $M$ such that $Q(F,F)\equiv 0$ is given by
\begin{align*}
	M=M(v,I)=\frac{I^{\de/2-1}}{(2\pi)^{3/2}\Ga(\de/2)}e^{-|v|^2/2-I},
\end{align*}
where $\Ga(s)$ is the Gamma function defined by $\Ga(s)=\int_0^\infty x^{s-1}e^{-x}dx$. Due to the macroscopic conservation laws from collision invariants of $Q$ (cf.~\cite{Bernhoff}), we introduce as in \cite{Guo} the initial defect mass, momentum and energy, which are respectively defined by
\begin{align} 
	M_0:=&\int_{\T^3\times\R^3\times\R_+} \{F(t,x,v,I)-M(v,I) \}dxdvdI\notag\\=&\int_{\T^3\times\R^3\times\R_+} \{F_0(x,v,I)-M(v,I) \}dxdvdI, \label{M}\\
	J_0:=&\int_{\T^3\times\R^3\times\R_+} v\{F(t,x,v,I)-M(v,I) \}dxdvdI\notag\\=&\int_{\T^3\times\R^3\times\R_+} v\{F_0(x,v,I)-M(v,I) \}dxdvdI,\label{J}\\
    E_0:=&\int_{\T^3\times\R^3\times\R_+} (|v|^2+2I)\{F(t,x,v,I)-M(v,I) \}dxdvdI\notag\\=&\int_{\T^3\times\R^3\times\R_+} (|v|^2+2I)\{F_0(x,v,I)-M(v,I) \}dxdvdI\label{E}.
\end{align}

In what follows we recall some closely related mathematical literature for the polyatomic Boltzmann model. In the space homogeneous case, Gamba and Pavi\'c-{C}oli\'c \cite{GP} proved the global existence and uniqueness under the extended Grad assumption through a careful analysis on polynomial and exponential moments. In the space inhomogeneous case under the perturbation framework near global Maxwellian, Bernhoff \cite{Bernhoff} studied the linearized collision operator. The coercivity estimate of the collision frequency and compactness of integral operator are obtained by the pointwise estimates with the help of some lemmas as in \cite{Glassey,Drange}. See also \cite{BST,BST1} for the compactness and Fredholm properties of the linearized operator. Similar results were obtained by Bernhoff \cite{Bernhoff1} for the discrete internal energy variable model which provides another way to describe the internal energy. Interested readers may refer to \cite{CC,EG} for related models.

For the moment we also recall some related studies that have been done with the classical Boltzmann equation modelling a single monatomic gas. DiPerna and Lions \cite{DL} proved the global existence of renormalized solutions for general $L^1_{x,v}$ initial data with finite mass, energy and entropy; though, the uniqueness of such solutions remains unknown. To study the perturbation near global Maxwellians, there are extensive works on the linearized operator, see Grad \cite{Grad}, Ellis and Pinsky \cite{EP}, and Baranger and Mouhot \cite{BM}. Ukai \cite{Ukai} obtained the global classical solutions and large-time behavior through the semi-group theory. We also mention \cite{UA,NI,Shizuta,UY}. In Sobolev spaces, using the macro-micro decomposition, Liu, Yang and Yu \cite{LYY} and Guo \cite{Guo04} developed energy methods independently. For general $L^\infty$ solutions near vacuum, Kaniel and Shinbrot \cite{KS} first obtained unique local solutions and the global existence is proven by Illner and Shinbrot \cite{IS}. In general bounded domains, Guo \cite{GuoY} developed the well-posedness theory through an $L^2\cap L^\infty$ approach. When the spacial variable is in the whole space or torus, Guo \cite{Guo} proved that the smallness of $L^\infty$ norm can be replaced by the smallness of relative entropy in the existence and uniqueness theory. In the above results concerning the $L^\infty$ solutions, the amplitude of the initial data is required to be small, while \cite{DHWY} also established an $L^\infty_{x,v}\cap L^1_x L^\infty_v$ method to prove the well-posedness for a class of initial data allowing to have large oscillations. Later, the result was extended to $L^p_vL^\infty_TL^\infty_x$ spaces, where $p$ can be finite, by Nishimura \cite{Nishimura} and Li \cite{Li} for hard and soft potentials, respectively. Despite the massive studies on the classical Boltzmann equation, there are few results on the well-posedness theory for the Boltzmann model in the polyatomic case.

We set up the perturbation $f=f(t,x,v,I)$ by
\begin{align}\label{perturb}
	f=\frac{F-M}{\sqrt{M}}.
\end{align}
In the similar way as how we define $F_*$, we denote $M_*=M(v_*,I_*)$, $M^\prime=M(v^\prime,I^\prime)$,  $M^\prime_*=M(v^\prime_*,I^\prime_*)$, $f_*=f(t,x,v_*,I_*)$, $f^\prime=f(t,x,v^\prime,I^\prime)$ and $f^\prime_*=f(t,x,v^\prime_*,I^\prime_*)$.
Substituting \eqref{perturb} into \eqref{BE}, we rewrite the Cauchy problem \eqref{BE} as 
\begin{align}\label{PBE}
	\pa_tf+v\cdot \na_x f+L f=\Ga (f,f),   \quad &\dis f(0,x,v,I)=f_0(x,v,I).
\end{align}
The linear operator $L$ and the nonlinear term $\Ga$ are respectively given by
\begin{align*}
	Lf=-\frac{1}{\sqrt{M}}Q(M,\sqrt{M}f)-\frac{1}{\sqrt{M}}Q(\sqrt{M}f,M),
\end{align*}
and
\begin{align}\label{DefGa}
	\Ga(f,g)=\frac{1}{\sqrt{M}}Q(\sqrt{M}f,\sqrt{M}g).
\end{align}
We denote
\begin{align*}
	\Ga_{\pm}(f,g)=\frac{1}{\sqrt{M}}Q_{\pm}(\sqrt{M}f,\sqrt{M}g).
\end{align*}
As in the monatomic case, one can write
	$L=\nu-K$.
Here, $\nu$ is the collision frequency  written as
\begin{align*}
	\nu(v,I)=\int_{(\R^3)^3\times (\R_+)^3} W(v,v_*,I,I_*|v^\prime,v^\prime_*,I^\prime,I^\prime_*)\frac{M_*}{(I I_*)^{\de/2-1}}dv_*dv^\prime dv^\prime_* dI_*dI^\prime dI^\prime_*,
\end{align*}
and the integral operator $K=K_2-K_1$ is defined as
\begin{align}\label{DefK1}
	K_1f=\int_{(\R^3)^3\times (\R_+)^3} W(v,v_*,I,I_*|v^\prime,v^\prime_*,I^\prime,I^\prime_*)\frac{\sqrt{MM_*}}{(I I_*)^{\de/2-1}}f_*dv_*dv^\prime dv^\prime_* dI_*dI^\prime dI^\prime_*
\end{align}
and
\begin{align}\label{DefK2}
	K_2f=&\frac{1}{\sqrt{M}}\int_{(\R^3)^3\times (\R_+)^3} W(v,v_*,I,I_*|v^\prime,v^\prime_*,I^\prime,I^\prime_*)\notag\\
	&\qquad\qquad\qquad\times\left(\frac{M^\prime\sqrt{M^\prime_*}}{(I^\prime I^\prime_*)^{\de/2-1}}f^\prime_*+\frac{M^\prime_*\sqrt{M^\prime}}{(I^\prime I^\prime_*)^{\de/2-1}}f^\prime\right)dv_*dv^\prime dv^\prime_* dI_*dI^\prime dI^\prime_*.
\end{align}
From \eqref{PBE}, by integrating along the backward trajectory, we obtain the mild form
\begin{align}\label{mild}
	\dis f(t,x,v,I)=&e^{-\nu(v,I)t}f_0(x-vt,v,I)+\int_0^t e^{-\nu(v,I)(t-s)}(Kf)(s,x-v(t-s),v,I)ds \notag\\
	&+\int_0^t e^{-\nu(v,I)(t-s)}\Ga(f,f)(s,x-v(t-s),v,I)ds.
\end{align}

Given a function $f=f(t,x,v,I)$, the $L^\infty$ norm in $(x,v,I)$ is defined by
\begin{align*}
	&\|f(t)\|_\infty:=\sup_{x\in \T^3,v\in\R^3,I\in\R_+} |f(t,x,v,I)|.
\end{align*}
The two main results of the paper are stated below.

\begin{theorem}[Local existence]\label{local}
	Let $w(v,I)=(1+|v|+\sqrt{I})^\be$ with $\be > 5$. Assume $F_0(x,v,I)=M+\sqrt{M}f_0(x,v,I)\geq 0$ with $\|w f_0\|_{\infty}<\infty$. There is a positive constant $C_1>0$ which depends only in $\be$, $\al$ and $\de$, and a positive time $T_1>0$ where
	\begin{align}\label{T_1}
		T_1=\frac{1}{8C_1(1+\|w f_0\|_{\infty})}>0,
	\end{align}
	such that  the Cauchy problem \eqref{BE} has a unique mild solution $F(t,x,v,I)=M+\sqrt{M}f(t,x,v,I)\geq0$ for $(t,x,v,I)\in[0,T_1]\times \T^3 \times \R^3\times \R_+$  in the sense of \eqref{mild} satisfying
	\begin{align}\label{LE}
		\sup_{0\leq t \leq T_1}\left\|w 
		f(t)\right\|_{\infty}\leq 2\left\|w f_0\right\|_{\infty}.
	\end{align}
\end{theorem}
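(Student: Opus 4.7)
I would establish the theorem by a Picard iteration in the mild form \eqref{mild}, carried out in the weighted space $L^\infty_w$. Starting from $f^0\equiv 0$, define
$$f^{n+1}(t,x,v,I) = e^{-\nu(v,I)t}f_0(x-vt,v,I) + \int_0^t e^{-\nu(v,I)(t-s)}\bigl[Kf^n + \Ga(f^n,f^n)\bigr](s,x-v(t-s),v,I)\,ds,$$
and aim to show that this sequence is uniformly bounded and contractive on $[0,T_1]$. The argument hinges on two pointwise kernel estimates uniform in $(v,I)$:
$$|w(v,I)\,(Kf)(v,I)|\le C_*\, \|wf\|_\infty,\qquad |w(v,I)\,\Ga(f,g)(v,I)|\le C_*\, \|wf\|_\infty\|wg\|_\infty,$$
with $C_*=C_*(\al,\be,\de)$. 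These are established via the $(R,r,\omega)$-change of variables written just after \eqref{si}: one bounds $|f'|$, $|g'_*|$ pointwise by $\|wf\|_\infty/w(v',I')$ etc., and then checks that the Gaussian factors $M_*$, $M'$, $M'_*$ together with the algebraic decay from $w(v_*,I_*)^{-2}$ (resp.\ $w(v',I')^{-1}w(v'_*,I'_*)^{-1}$) dominate the growth from the cross section $B=C\Phi^{1-\al/2}$, the $(II_*)^{\de/2-1}$ factor, and the $(r(1-r))^{\de/2-1}(1-R)^{\de-1}R^{1/2}$ measure. The restriction $\be>5$ is precisely what forces these integrations in $(v_*,I_*,I',I'_*,R,r,\omega)$ to be finite and uniform in $(v,I)$.

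\textbf{Closing the scheme.} Plugging the two bounds into the mild form and using $e^{-\nu(t-s)}\le 1$ yields
$$\|wf^{n+1}(t)\|_\infty \le \|wf_0\|_\infty + C_* t\bigl(\|wf^n(t)\|_\infty + \|wf^n(t)\|_\infty^2\bigr),$$
and a short induction with hypothesis $\|wf^n\|_\infty\le 2\|wf_0\|_\infty$ closes the iteration provided $4C_* T_1(1+\|wf_0\|_\infty)\le 1$, which coincides with \eqref{T_1} after absorbing $C_*$ into $C_1$. Bilinearity of $\Ga$ gives in the same way
$$\|w(f^{n+1}-f^n)(t)\|_\infty \le C_*t\bigl(1+2\|wf^n\|_\infty+2\|wf^{n-1}\|_\infty\bigr)\|w(f^n-f^{n-1})\|_\infty \le \tfrac{1}{2}\|w(f^n-f^{n-1})\|_\infty$$
on $[0,T_1]$, so $\{f^n\}$ is Cauchy in $L^\infty([0,T_1];L^\infty_w)$ and its limit $f$ solves \eqref{mild} and verifies \eqref{LE}. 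Uniqueness follows from the same bilinearity bound applied to the difference of two solutions with identical data.

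\textbf{Positivity and main obstacle.} To guarantee $F=M+\sqrt M\, f\ge 0$ I would run a parallel positivity-preserving iteration at the $F$-level,
$$(\pa_t + v\cdot\na_x + \nu(F^n))F^{n+1} = Q_+(F^n,F^n),\qquad F^{n+1}(0)=F_0,\quad F^0\equiv M,$$
where $\nu(F^n)$ is the $F^n$-loss collision frequency; since $Q_+(F^n,F^n)\ge 0$ whenever $F^n\ge 0$, Duhamel's formula yields $F^{n+1}\ge 0$ inductively, and the same kernel bounds show this sequence converges to the same mild solution, hence $F\ge 0$. The technical heart of the proof lies in the two uniform kernel bounds above: the weaker $\nu$-absorbing versions $|w\nu^{-1}Kf|\le C\|wf\|_\infty$, $|w\nu^{-1}\Ga(f,g)|\le C\|wf\|_\infty\|wg\|_\infty$, which are comparatively standard, would only close the iteration for small data since $\int_0^t e^{-\nu(t-s)}\nu\,ds\le 1$ is $t$-independent. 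Producing a genuine factor of $t$ so as to reach local existence for arbitrary bounded initial data requires the uniform-in-$\nu$ bounds, and in the polyatomic framework these demand careful bookkeeping of the internal-energy integration variables $I_*,I',I'_*$ and of the parameter $\de\ge 2$ entering the kernel \eqref{si}.
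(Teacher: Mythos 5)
Your overall architecture (iteration in $L^\infty_w$, uniform kernel bounds producing a factor of $t$, Kaniel--Shinbrot-type scheme for positivity) is close to the paper's, but there is a genuine gap in the main line of your argument. The uniform bound you claim for the \emph{full} bilinear term,
\begin{align*}
|w(v,I)\,\Ga(f,g)(v,I)|\le C_*\|wf\|_\infty\|wg\|_\infty ,
\end{align*}
is false for the loss part. Indeed $\Ga_-(f,g)=f\cdot\int W\sqrt{M_*}\,g_*(II_*)^{1-\de/2}\,dv_*\cdots$, and even after bounding $|g_*|\le\|wg\|_\infty/w(v_*,I_*)$ the remaining integral still grows like the collision frequency $\nu(v,I)\sim(1+|v|+\sqrt I)^{2-\al}$ for hard potentials (the Maxwellian $\sqrt{M_*}$ already controls the $v_*$-integration, so the extra polynomial decay in $(v_*,I_*)$ does not remove the growth in $(v,I)$; compare \eqref{estGa-1}--\eqref{estGa-2}). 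Consequently $\int_0^t e^{-\nu(t-s)}|w\Ga_-(f^n,f^n)|\,ds\le C\|wf^n\|_\infty^2$ with \emph{no} factor of $t$, your induction inequality acquires an $O(1)$ quadratic term, and the scheme only closes for small data — exactly the obstruction you flag but do not actually overcome. The uniform (no-$\nu$) bound is attainable only for $K$ and for the gain term $\Ga_+$ (via $w(v,I)\le C(w(v',I')+w(v'_*,I'_*))$ and the integrability forced by $\be>5$).

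The fix is the scheme you describe only as a "parallel" positivity device: the paper takes the $F$-level iteration $(\pa_t+v\cdot\na_x)F^{n+1}+F^{n+1}g^n=Q_+(F^n,F^n)$ as the \emph{primary} construction, so that after integrating along characteristics the nonlinear loss term sits inside the damping exponential $e^{-\int g^n}\le 1$ and only $K f^n$ and $\Ga_+(f^n,f^n)$ appear as sources; both admit $\nu$-free bounds and yield the factor $T$ needed for \eqref{T_1} with large data. You should therefore promote that scheme to the main argument rather than running a plain Duhamel iteration with $e^{-\nu t}$. A secondary technical point: in the contraction step the difference of the exponentials $|e^{-\int g^{n+1}}-e^{-\int g^n}|$ produces an extra factor $\nu(v,I)$, which the paper absorbs by measuring the differences in the weaker weight $\sqrt w$ (using $\sqrt{w}\,\nu\le Cw$ and $\sqrt{w(v',I')w(v'_*,I'_*)}\ge C\sqrt{w(v,I)}$); your claimed contraction in the full $w$-norm would need a justification of this absorption.
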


\begin{theorem}[Global existence]\label{global}
	Assume the same conditions as Theorem \ref{local}. In addition, suppose that the initial data satisfies $F_0(x,v,I)=M+\sqrt{M}f_0(x,v,I)\geq 0$  and the conservation laws \eqref{M}, \eqref{J} and \eqref{E} with $(M_0,J_0,E_0)=0$. Then there are constants $\eps>0$ and $\lambda>0$ such that if
	$
	\left\|w f_0\right\|_{\infty}\leq \eps, 
	$
	the Boltzmann equation \eqref{BE} has a unique global mild solution in the sense of \eqref{mild}, denoted by $F(t,x,v,I)=M+\sqrt{M}f(t,x,v,I)\geq0$ for $(t,x,v,I)\in \R_+\times \T^3 \times \R^3\times \R_+$, which satisfies
	\begin{align}\label{GE}
		\left\|w f(t)\right\|_{\infty}\leq Ce^{-\la t}\left\|w f_0\right\|_{\infty}
	\end{align}
	for any $t\geq 0$, where $C$ is a generic constant.
\end{theorem}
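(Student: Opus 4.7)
The plan is to deploy Guo's $L^2 \cap L^\infty$ framework, adapted to the polyatomic setting: first establish linear $L^2_{x,v,I}$ decay using coercivity of $L$ and the zero-defect conservation laws; next upgrade this to weighted $L^\infty$ decay for the linear problem via a double Duhamel iteration along backward characteristics that exploits a compactness-type property of $K = K_2 - K_1$; finally absorb $\Ga(f,f)$ by a continuity bootstrap, with Theorem \ref{local} providing short-time existence. For the first step, I would use that $L$ is symmetric and nonnegative on $L^2_{v,I}$ with kernel spanned by the collision invariants $\{\sqrt{M},\,v_i\sqrt{M}\,(i=1,2,3),\,(|v|^2+2I)\sqrt{M}\}$ and enjoys a spectral gap on the orthogonal complement (both already available from \cite{Bernhoff}). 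The conservation condition $(M_0,J_0,E_0)=0$ is propagated in time and, via a macroscopic Poincar\'e-type estimate on $\T^3$ in the spirit of \cite{Guo}, controls the hydrodynamic projection $\mathbf{P}f$ by $(I-\mathbf{P})f$ in the $\nu$-weighted norm. The standard energy identity, with the nonlinear contribution $(\Ga(f,f),f)$ absorbed by the a priori smallness of $\|w f\|_\infty$, then yields $\|f(t)\|_{L^2} \le C e^{-\la_0 t}\|f_0\|_{L^2}$ along any small solution.

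Setting $h = w f$, the weighted linearized mild form reads
\begin{align*}
h(t,x,v,I) = e^{-\nu(v,I) t}h_0(x-vt,v,I) + \int_0^t e^{-\nu(v,I)(t-s)}(K_w h)(s, x-v(t-s), v, I)\,ds,
\end{align*}
where $K_w g := w\, K(w^{-1} g)$. Iterating this identity for $K_w h$ once more produces a double time integral with kernel of the form $k_w(v,I;v_1,I_1)\, k_w(v_1,I_1;v_2,I_2)$; a pointwise bound on this iterated kernel that extends the monatomic estimates of Glassey and Drange to the extra internal-energy variables allows one to split the intermediate $(v_1,I_1)$ integration into a large bounded set, on which Cauchy--Schwarz together with the phase-one $L^2$ decay applies, and its complement, on which the Gaussian factor in $M$ against the polynomial weight $w$ supplies the required smallness. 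This delivers the linear decay $\|h(t)\|_\infty \le Ce^{-\la t}\|h_0\|_\infty$.

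For the nonlinear step, a standard estimate on $\Ga_\pm$ (paralleling the monatomic case but carrying the $(I,I_*,I^\prime,I^\prime_*)$ factors through $w$) gives $|w\Ga(f,f)(v,I)| \le C\,\nu(v,I)\,\|wf\|_\infty^2$. Letting $\CM(t) := \sup_{0\le s\le t} e^{\la s}\|wf(s)\|_\infty$ for some $\la \in (0,\la_0)$ and substituting into \eqref{mild} together with the phase-two linear decay produces a closed inequality $\CM(t) \le C\|wf_0\|_\infty + C\CM(t)^2$. Once $\eps$ is chosen small enough, this yields a uniform bound on $\CM(t)$, which is precisely \eqref{GE}; combining with Theorem \ref{local} and a standard continuation argument extends the solution to $[0,\infty)$, with nonnegativity inherited from the local result at each step.

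The principal technical obstacle is the iterated-kernel estimate in the second step. Each application of $K$ involves a six-fold integration over $(v_*, v^\prime, v^\prime_*, I_*, I^\prime, I^\prime_*)$ coupled through the Dirac masses in \eqref{DefW}, and the composition $K_w \circ K_w$ requires changes of variables adapted both to the momentum-energy shell and to the internal-energy splitting $(r,R)$ used to derive the $B$-form of $Q$. One must then verify that the resulting kernel has integrable decay in the relative velocity and in the $I$-variables, weighted by $w(v,I)/w(v_2,I_2)$, with constants uniform in $\de \ge 2$, so that the $L^\infty$ iteration closes independently of the degree of freedom.
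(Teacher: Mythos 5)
Your overall architecture --- linear $L^2$ decay, linear weighted $L^\infty$ decay by a double Duhamel iteration along characteristics, then a nonlinear bootstrap combined with the local theorem --- matches the paper's, and the first two phases are sketched essentially correctly (the paper derives the macroscopic dissipation via a Fourier--Kawashima argument on the moment system rather than a Poincar\'e-type estimate, and keeps the $L^2$ phase purely linear, but these are routine variants). The genuine gap is in your nonlinear closing step. You assert that substituting $|w\Ga(f,f)|\le C\nu(v,I)\|wf\|_\infty^2$ into Duhamel ``together with the phase-two linear decay produces a closed inequality $\CM(t)\le C\|wf_0\|_\infty+C\CM(t)^2$.'' This does not follow as stated: the source $w\Ga(f,f)(s)$ is \emph{not} bounded in $(v,I)$, since the estimate carries the unbounded factor $\nu(v,I)\sim(1+|v|+\sqrt I)^{2-\al}$, so you cannot apply the linear decay $\|S(t-s)g\|_\infty\le Ce^{-\la(t-s)}\|g\|_\infty$ with $g=w\Ga(f,f)(s)$.

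The paper resolves this by expanding $S(t-s)$ once more through its own mild formulation, splitting the nonlinear Duhamel contribution into $J_1=\int_0^t e^{-\nu(v,I)(t-s)}w\Ga\,ds$, where $e^{-\nu(t-s)}\nu$ integrates in $s$ to an $O(1)$ constant, and $J_2=\int_0^t\int_s^t e^{-\nu(v,I)(t-s_1)}K_w\{S(s_1-s)w\Ga\}\,ds_1\,ds$. In $J_2$ the linear decay is applied to $S(s_1-s)\,\nu^{-1}w\Ga$, and one must then control $\int k_w(v,v_*,I,I_*)\,\nu(v_*,I_*)\,dv_*dI_*$; the kernel estimate \eqref{estk} alone does not absorb $\nu(v_*,I_*)$ (its admissible growth in $I_*$ is only $(1+I_*)^{1/8}$, unlike the monatomic bound $\int k_w(v,v_*)(1+|v_*|)\,dv_*\le C$ used in \cite{GuoY,DHWY}), and the extra factor $\nu(v,I)^{-1}$ gained from the $s_1$-integration of $e^{-\nu(v,I)(t-s_1)}$ is exactly what is needed to reduce the task to $\int k_w\,\nu(v_*,I_*)/\nu(v,I)\,dv_*dI_*\le C$ as in \eqref{J22}. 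The paper explicitly flags this as the point where the polyatomic case departs from the monatomic literature; your proposal instead locates the principal obstacle in a $\de$-uniform iterated-kernel bound, which is not required (the theorem is for fixed $\de$ and all constants may depend on it).
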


\begin{remark}
In the paper we follow the strategy of \cite{GuoY} and only focus on the global existence of mild solutions in $L^\infty$ framework instead of classical solutions in high-order Sobolev spaces as in \cite{Guo02,Guo03,Guo04}, because we start from the original collision operator of the form \eqref{DefQ} with $W$ involving two Dirac delta functions in \eqref{DefW}. However, if one uses another formulation with a smooth kernel like \cite{GP}, then it would be possible to use the energy method to work on establishing the existence of  more regular solutions assuming that initial data is  more regular.
\end{remark}

Now we give some notations which will be used later. Throughout the paper, if a constant $C$ depends on some parameters $\be_1,\be_2\cdots$, then we denote it by $C_{\be_1,\be_2,\cdots}$ to emphasize the explicit dependence. Without further explanation, we use $\chi$ as the characteristic function where $\chi_E(x)=1$ if $x\in E$ and $\chi_E(x)=0$ if $x\notin E$. Furthermore, we define some norms for given functions $f=f(t,x,v,I)$ and $g=g(t,x)$ as follows:
\begin{align*}
	&\|f(t)\|:=\left( \int_{\T^3\times\R^3\times\R_+} |f(t,x,v,I)|^2 dxdvdI\right)^{\frac{1}{2}}=\|f(t)\|_{L^2_{x,v,I}},\\
	&\|f(t,x)\|_{L^2_{v,I}}:=\left( \int_{\R^3\times\R_+} |f(t,x,v,I)|^2 dvdI\right)^{\frac{1}{2}},\\
	&\|g(t)\|:=\left( \int_{\T^3} |g(t,x)|^2 dx\right)^{\frac{1}{2}}.
\end{align*}
Also, for $g$, its Fourier transformation $\hat{g}$ is defined by
$$
\hat{g}(t,k)=\int_{\T^3}e^{-ix\cdot k}g(t,x)dx, \quad k\in \Z^3.
$$

We introduce the strategy of the proof. The $L^2\cap L^\infty$ framework we use in this paper is initially established in \cite{GuoY}. The major difficulty focuses on the additional continuous internal energy variable. In linear $L^\infty$ estimate, similar as in \cite{GuoY}, after double integration, the main problem is how to estimate the term
$$
\int_0^t e^{-\nu(v,I)(t-s)}\int k_w(v,v_*,I,I_*)\int_0^s e^{-\nu(v_*,I_*)(s-s_1)} K_wU(s_1)h_0ds_1dv_*dI_* ds.
$$
Notice that in the classical case, the corresponding term is similar as above except that  there is no $I$ and $I_*$.
Then when the velocity $|v|$ and relative velocities $|v-v_*|$ and $|v_*-v_{**}|$ are large, the above integral is small due to the property
$$
\int_{\R^3}k_w(v,v_*)e^{\frac{|v-v_*|^2}{20}}dv_* \leq C(1+|v|)^{-1}.
$$ 
Then we may reduce our problem to the case that all velocity variables are bounded, where the rest of the integral can be bounded by the $L^2$ norm. However, when there is one more unbounded internal energy variable, we will need other decay properties in the internal energy variables $I$ and $I_*$ to reduce the case to $L^2$ norms. Also, since our current weight function $w=w(v,I)$ contains $I$, we should give more precise estimates on the ratio $w(v,I)/w(v_*,I_*)$, which require us to use a series changes of variables and some properties of Dirac delta functions to absorb this term by some exponential decay in the explicit form of the integral operator. To overcome the difficulty mentioned above, we need the decay in large internal energies and it turns out that we do have it in Lemma \ref{lemmaK}. In fact, Lemma \ref{lemmaK} implies that even if ones multiply the integral kernel $k_w$ with some exponential growth term in the relative velocity $v-v_*$ and some polynomial growth term in the internal energy $I_*$, one can still obtain the decay in both velocity $v$ and internal energy $I$. Then we only need to focus on the case that all velocity and internal energy variables are bounded and using the Cauchy-Schwarz inequality we can turn the integral to $L^2$ norms of $f$.

For the linear $L^2$ decay, motivated by \cite{Duan,Guo02,LY,LYY}, we estimate the microscopic and macroscopic parts respectively. For the microscopic part, the coercivity estimate derived in \cite{Bernhoff} is important. We improve the inequality to get the dissipation in the microscopic part $P_2f$, where we have decomposed $f$ into $f=P_1f+P_2f$ with $P_1$ being the orthogonal projection to $\ker L$. For the dissipation of the macroscopic
part, we first write the equation for $P_1f$ in terms of the Fourier transformation, which is essentially a hyperbolic-parabolic system. Then we can apply Kawashima's technique \cite{Kawashima} to obtain the dissipation of $P_1f$, or equivalently $a$, $b$ and $c$. Combining the estimates of the microscopic and macroscopic parts, we construct a temporal Lyapunov functional which is equivalent with the desired total energy functional to derive the $L^2$ estimate.

For nonlinear problem, we use the solution operator of the linear problem to write the solution to the nonlinear equation using Duhamel's principle and double iteration. We point out that although we can prove for the nonlinear term $\Ga$ that  
$$
\left|w\Ga_+(f,f)\right|\leq C\nu(v,I)\|wf\|^2_\infty,
$$
our estimate on $K_w$ does not provide enough decay to cancel the $\nu$ above so we need to find other ways with good properties on $I$ to replace $\int k_w(v,v_*)(1+|v_*|)dv_* \leq C$ which is applied in \cite{GuoY,DHWY}. Therefore, the estimate on $\int^t_0\int^t_se^{-\nu(v,I)(t-s_1)}K_w\left\{S(s_1-s)w\Ga(\frac{h}{w},\frac{h}{w})(s)\right\}d{s_1}ds$ is different from the literature mentioned above. We seek an additional decay in $\nu$ from $e^{-\nu(v,I)(t-s_1)}$ in the integral, which needs a more careful analysis.

The rest of the paper is organized as follows. In Section 2 we prove some properties of the operators in the perturbed equation which will frequently appear in the next few sections. After finishing the preliminaries, there are four major steps to prove the global decay which is our Theorem \ref{global}. The local solution is constructed in Section 3 using a similar approximation sequence as in \cite{GuoY,DHWY}. We prove the boundedness of the approximation sequence and use it to deduce that it is a Cauchy sequence. Then, after taking the limit we obtain a local-in-time solution. Next, we should consider the linearized Boltzmann equation first. With some assumptions on the initial data including the conservation of mass, momentum and energy, we obtain the $L^2$ decay in Section 4. Then, in Section 5, we follow the strategy stated above to get the $L^\infty$ decay. By the Duhamel's principle, we deduce the exponential decay in time for the nonlinear problem in Section 6 to finish the proof.

\section{Properties of operators $L$ and $\Ga$}
In this section, we study the structure of the perturbed equation near global Maxwellians and prove useful properties of operators $\nu$, $K$ and $\Ga$ which will be frequently used in the following sections. The first Lemma we state below is about the equivalence between the collision frequency $\nu(v,I)$ and $(1+|v|+\sqrt{I})^{2-\al}$. We point out that the upper bound we obtain has removed the restriction of an additional parameter $\ep$  in \cite[Theorem 2]{Bernhoff} such that $\nu(v,I)$ and $\nu(v_*,I_*)$ are respectively equivalent to some polynomials with the same order and we can use Lemma 2.2 to bound $\int_{\R^3\times\R_+}k_w(v,v_*,I,I_*)\frac{\nu(v_*,I_*)}{\nu(v,I)}dv_*dI_*$ in the nonlinear estimate \eqref{J22} later.

\begin{lemma}\label{lenu}
	There exist constants $C$ and $\nu_0$ such that
	\begin{align}\label{estnu}
		\nu_0\leq\frac{1}{C}(1+|v|+\sqrt{I})^{2-\al}\leq \nu(v,I) \leq C(1+|v|+\sqrt{I})^{2-\al}.
	\end{align}
\end{lemma}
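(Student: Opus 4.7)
The plan is to reduce $\nu(v,I)$ to a simple integral against $M_*$ and then control it by moment bounds on $M$. Starting from the reformulated collision expression in $(R,r,\omega)$--coordinates displayed just before the definition of $M$, the loss-term contribution to $\nu$ is
\[
\nu(v,I)=c_\de\int_{\R^3\times\R_+}B(v,v_*,I,I_*)\,M(v_*,I_*)\,dv_*dI_*,\qquad B=C\,\Phi^{1-\al/2},
\]
with $\Phi=|v-v_*|^2/4+I+I_*$, where the constant $c_\de=\int_{\S^2\times[0,1]^2}(r(1-r))^{\de/2-1}(1-R)^{\de-1}R^{1/2}\,d\om dR dr$ is finite and positive thanks to $\de\geq 2$. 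Thus both bounds in \eqref{estnu} are reduced to two-sided estimates of $\int \Phi^{1-\al/2}M_*\,dv_*dI_*$.

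For the upper bound, since $1-\al/2\in[0,1]$, I would use the subadditivity $\Phi^{1-\al/2}\leq (|v-v_*|/2)^{2-\al}+I^{1-\al/2}+I_*^{1-\al/2}$, together with $|v-v_*|^{2-\al}\leq C(|v|^{2-\al}+|v_*|^{2-\al})$ (valid because $2-\al\in[0,2]$). Integration against $M_*$ yields finite contributions from the $v_*$ and $I_*$ moments of $M$, because the Gaussian factor $e^{-|v_*|^2/2}$ dominates $|v_*|^{2-\al}$ and the factor $I_*^{\de/2-1}e^{-I_*}$ dominates $I_*^{1-\al/2}$. This produces
\[
\nu(v,I)\leq C\bigl(1+|v|^{2-\al}+(\sqrt{I})^{2-\al}\bigr)\leq C(1+|v|+\sqrt{I})^{2-\al},
\]
using the elementary comparison $1+a^{p}+b^{p}\leq 3(1+a+b)^{p}$ for $a,b\geq 0$ and $p\geq 0$.

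For the lower bound, I exploit the two different pointwise minorants of $\Phi$. First, $\Phi\geq I$ gives immediately $\nu(v,I)\geq c(\sqrt{I})^{2-\al}$ because $\int M_*\,dv_*dI_*=1$. Second, restricting integration to the set $\{|v_*|\leq |v|/2,\ I_*\leq 1\}$ (on which $|v-v_*|\geq |v|/2$ and $M_*$ has strictly positive mass for $|v|\geq 2$) yields $\nu(v,I)\geq c|v|^{2-\al}$ for $|v|$ large. Finally, on any bounded region of $(v,I)$ the integrand $\Phi^{1-\al/2}M_*$ is strictly positive, so $\nu(v,I)\geq \nu_0>0$ uniformly. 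Combining the three via $\max(1,|v|^{2-\al},(\sqrt{I})^{2-\al})\geq 3^{-(2-\al)}(1+|v|+\sqrt{I})^{2-\al}$ supplies the desired lower bound with a uniform constant.

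The main point to watch, rather than a genuine obstacle, is the matching exponent $2-\al$ on both sides—this is precisely the improvement the authors flag over \cite[Theorem~2]{Bernhoff}. It is essential that the subadditivity and the moment estimates be applied directly, so that no extra $\vps>0$ enters the power: the $|v-v_*|^{2-\al}$ term integrates against $M_*$ to exactly $(1+|v|)^{2-\al}$ up to a constant, and the $I^{1-\al/2}$ term is already in the sharp form. Once this sharp exponent is in hand, the comparison with $(1+|v|+\sqrt I)^{2-\al}$ is purely elementary.
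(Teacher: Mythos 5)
Your argument is correct and follows essentially the same route as the paper: both reduce the claim to a two-sided estimate of $\int(\tfrac{|v-v_*|^2}{4}+I+I_*)^{1-\alpha/2}M_*\,dv_*dI_*$ and then conclude by moment bounds (upper bound) and restriction to regions where $|v-v_*|\gtrsim |v|$ (lower bound); the only difference is that you reach this intermediate identity in one line from the stated $(R,r,\omega)$ reformulation with the constant $c_\delta$ factored out, whereas the paper re-derives it by integrating out the Dirac deltas and the $I',I'_*$ variables with indicator-function bounds. One minor point: the uniform bound $\nu\geq\nu_0$ does not follow from positivity of the integrand ``on bounded regions'' alone, but it follows at once from the pointwise minorant $\Phi\geq I_*$, which gives $\nu(v,I)\geq c_\delta C\int I_*^{1-\alpha/2}M_*\,dv_*dI_*>0$ for all $(v,I)$.
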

\begin{proof}
	Using the properties of Dirac delta function, denoting $V=\frac{v+v_*}{2}$, $V^\prime=\frac{v^\prime+v^\prime_*}{2}$, we first rewrite 
	\begin{align}\label{1nu}
		\nu(v,I)&=C\int_{(\R^3)^3\times (\R_+)^3} M_*\si \frac{|u|}{|u^\prime|}\de_3(v+v_*-v^\prime-v^\prime_*)\notag\\
		&\qquad\qquad\times\de_1(\frac{|v|^2}{2}+\frac{|v_*|^2}{2}+I+I_*-\frac{|v^\prime|^2}{2}-\frac{|v^\prime_*|^2}{2}-I^\prime-I^\prime_*)dv_*dv^\prime dv^\prime_* dI_*dI^\prime dI^\prime_*\notag\\
		&=C\int_{(\R^3)^3\times (\R_+)^3} M_*\si \chi_{\{|u|^2+4I+4I_*-4I^\prime-4I^\prime_*>0\}} \frac{|u|}{|u^\prime|^2}\de_3(V-V^\prime)\notag\\
		&\qquad\qquad\times\de_1(\sqrt{|u|^2+4I+4I_*-4I^\prime-4I^\prime_*}-|u^\prime|)dv_*du^\prime dV^\prime dI_*dI^\prime dI^\prime_*.
	\end{align}
	Letting $\om=\frac{u^\prime}{|u^\prime|}$ and using \eqref{si}, we have
		\begin{align}\label{2nu}
		\nu(v,I)&=C\int_{\R^3\times\R_+\times\S^2\times\R^3\times (\R_+)^3} M_*\frac{\sqrt{\Phi-(I^\prime+I^\prime_*)}}{\Phi^{\de+(\al-1)/2}}(I^\prime I^\prime_*)^{\de/2-1} \de_3(V-V^\prime)\notag\\
		&\ \times\de_1(\sqrt{|u|^2+4I+4I_*-4I^\prime-4I^\prime_*}-|u^\prime|)\chi_{\{|u|^2+4I+4I_*-4I^\prime-4I^\prime_*>0\}}dv_*d|u^\prime|d\om dV^\prime dI_*dI^\prime dI^\prime_*\notag\\
		&=C\int_{\R^3\times (\R_+)^3} M_*\frac{\sqrt{\Phi-(I^\prime+I^\prime_*)}}{\Phi^{\de+(\al-1)/2}}(I^\prime I^\prime_*)^{\de/2-1} \chi_{\{I^\prime+I^\prime_*<\Phi\}}dv_* dI_*dI^\prime dI^\prime_*\notag\\
		&=C\int_{\R^3\times (\R_+)^3} e^{-|v_*|^2/2-I_*}\frac{\sqrt{\Phi-(I^\prime+I^\prime_*)}}{\Phi^{\de+(\al-1)/2}}(I_*I^\prime I^\prime_*)^{\de/2-1} \chi_{\{I^\prime+I^\prime_*<\Phi\}}dv_* dI_*dI^\prime dI^\prime_*.
	\end{align}
	We now prove the lower bound of $\nu(v,I)$. Observing that $\chi_{\{I^\prime+I^\prime_*<\Phi\}}\geq \chi_{\{I^\prime<\Phi/4\}}\chi_{\{I^\prime_*<\Phi/4\}}$ and $\left(\Phi-(I^\prime+I^\prime_*)\right)\chi_{\{I^\prime<\Phi/4\}}\chi_{\{I^\prime_*<\Phi/4\}}\geq \Phi/2$, one has
	\begin{align*}
		\nu(v,I)&\geq C\int_{\R^3\times (\R_+)^3} e^{-|v_*|^2/2-I_*}\frac{\Phi^{1/2}}{\Phi^{\de+(\al-1)/2}}(I_*I^\prime I^\prime_*)^{\de/2-1} \chi_{\{I^\prime<\Phi/4\}}\chi_{\{I^\prime_*<\Phi/4\}}dv_* dI_*dI^\prime dI^\prime_*\notag\\
		&\geq C\int_{\R^3\times \R_+} e^{-|v_*|^2/2-I_*}\frac{\Phi^{1/2}}{\Phi^{\de+(\al-1)/2}}I_*^{\de/2-1}\left(\int_{\R_+}(I^\prime )^{\de/2-1}\chi_{\{I^\prime<\Phi/4\}}dI^\prime\right)^2 dv_* dI_* \notag\\
		&= C\int_{\R^3\times \R_+} e^{-|v_*|^2/2-I_*}I_*^{\de/2-1}\Phi^{\frac{2-\al}{2}} dv_* dI_*.
	\end{align*}
    The definition of $\Phi=\frac{|u|^2}{4}+I^\prime+I^\prime_*$ yields 
    \begin{align*}
    	\nu(v,I)&\geq C\int_{\R^3\times \R_+} e^{-|v_*|^2/2-I_*}I_*^{\de/2-1}(|v-v_*|+\sqrt{I}+\sqrt{I_*})^{2-\al} dv_* dI_*\notag\\
    	&\geq C\int_{\R^3} e^{-|v_*|^2/2}(|v-v_*|+\sqrt{I})^{2-\al} dv_*.
    \end{align*}
    By the inequality $1\geq \chi_{\{|v|\geq 1\}}\chi_{\{|v_*|\leq 1/2\}}+\chi_{\{|v|< 1\}}\chi_{\{|v_*|> 2\}}$, we have
        \begin{align}\label{lower}
    	\nu(v,I)&\geq C\int_{\R^3\times \R_+} e^{-|v_*|^2/2-I_*}I_*^{\de/2-1}(|v-v_*|+\sqrt{I}+\sqrt{I_*})^{2-\al} dv_* dI_*\notag\\
    	&\geq C\int_{\R^3} e^{-|v_*|^2/2}(\big||v|-|v_*|\big|+\sqrt{I})^{2-\al}\left(\chi_{\{|v|\geq 1\}}\chi_{\{|v_*|\leq 1/2\}}+\chi_{\{|v|< 1\}}\chi_{\{|v_*|> 2\}}\right) dv_*\notag\\
    	&\geq C\int_0^{1/2} e^{-|v_*|^2/2}dv_*(|v|+\sqrt{I})^{2-\al}\chi_{\{|v|\geq 1\}}+C\int_2^{\infty}e^{-|v_*|^2/2}dv_*(1+\sqrt{I})^{2-\al}\chi_{\{|v|< 1\}}\notag\\
    	&\geq C(1+|v|+\sqrt{I})^{2-\al} .
    \end{align}
    The upper bound can be obtained in the similar way. Using $\chi_{\{I^\prime+I^\prime_*<\Phi\}}\leq \chi_{\{I^\prime\leq \Phi\}}\chi_{\{I^\prime_*\leq \Phi\}}$ and $\sqrt{\Phi-(I^\prime+I^\prime_*)}\leq \sqrt{\Phi}$, it holds that
    	\begin{align}\label{3nu}
    	\nu(v,I)&\leq C\int_{\R^3\times (\R_+)^3} e^{-|v_*|^2/2-I_*}\frac{\Phi^{1/2}}{\Phi^{\de+(\al-1)/2}}(I_*I^\prime I^\prime_*)^{\de/2-1} \chi_{\{I^\prime\leq \Phi\}}\chi_{\{I^\prime_*\leq \Phi\}}dv_* dI_*dI^\prime dI^\prime_*\notag\\
    	&\leq C\int_{\R^3\times \R_+} e^{-|v_*|^2/2-I_*}\frac{\Phi^{1/2}}{\Phi^{\de+(\al-1)/2}}I_*^{\de/2-1}\left(\int_{\R_+}(I^\prime )^{\de/2-1}\chi_{\{I^\prime<\Phi\}}dI^\prime\right)^2 dv_* dI_*\notag\\
    	&=C\int_{\R^3\times \R_+} e^{-|v_*|^2/2-I_*}I_*^{\de/2-1}(|v-v_*|+\sqrt{I}+\sqrt{I_*})^{2-\al} dv_* dI_*.
    \end{align}
    Clearly $|v-v_*|+\sqrt{I}+\sqrt{I_*}\leq (1+|v_*|)(1+\sqrt{I_*})(1+|v|+\sqrt{I})$, then
    \begin{align}\label{upper}
    	\nu(v,I)&\leq C(1+|v|+\sqrt{I})^{2-\al}\int_{\R_+} e^{-I_*}I_*^{\de/2-1}(1+\sqrt{I_*})^{2-\al}dI_*\int_{\R^3}e^{-|v_*|^2/2}(1+|v_*|)^{2-\al} dv_*\notag\\
    	&\leq C(1+|v|+\sqrt{I})^{2-\al} .
    \end{align}
Collecting \eqref{lower} and \eqref{upper}, we obtain the desired estimate \eqref{estnu}.
\end{proof}

In $L^2\cap L^\infty$ approach for polyatomic gases, we can bound our integral by $L^2$ norm in the case that all the velocity and internal energy variables are bounded, so when these variables are large, the $L^\infty$ norm should be small. The following lemma implies such property. But, different from the monatomic case, we need some kind of growth property in the integral for the internal energy $I_*$ which is an integral variable to take care of the case when $I_*$ is large. Thus, we deduce our lemma with a polynomial increase in $I_*$ where the order can be strictly positive in order to get the decay. 

\begin{lemma}\label{lemmaK}
	Let $K=K_2-K_1$ be defined in \eqref{DefK1} and \eqref{DefK2}. There exist functions $k_i=k_i(v,v_*,I,I_*)$, $i=1,2$, such that 
	$$
	K_if(t,x,v,I)=\int_{\R^3\times \R_+}k_i(v,v_*,I,I_*)f(t,x,v_*,I_*)dv_*dI_*.
	$$
	Moreover, if we define $k(v,v_*,I,I_*)=k_2(v,v_*,I,I_*)-k_1(v,v_*,I,I_*)$ and 
	\begin{align}\label{defkw}
		k_w(v,v_*,I,I_*)=k(v,v_*,I,I_*)\frac{w(v,I)}{w(v_*,I_*)},
	\end{align}
    for $\be\in \R$, $0\leq\ep\leq\frac{1}{64}$ and $0\leq m\leq 1/8$, we have the estimate
    \begin{align}\label{estk}
    	\int_{\R^3\times \R_+}k_w(v,v_*,I,I_*)e^{\ep|v-v_*|^2}(1+I_*)^{m}dv_*dI_*\leq \frac{C_{\be,\ep,m}}{1+|v|+I^{1/4}}.
    \end{align}
    \end{lemma}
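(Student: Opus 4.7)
The plan is to proceed in two stages: first derive explicit integral representations for $k_1$ and $k_2$ by integrating out the momentum and energy Dirac deltas in \eqref{DefW}, then obtain pointwise bounds and perform the final integration. For $k_1$, the change of variables is essentially identical to the one used for $\nu(v,I)$ in the proof of Lemma \ref{lenu}: fixing $(v_*, I_*)$ and integrating out $(v', v'_*, I', I'_*)$ yields an expression of the form
\begin{align*}
k_1(v,v_*,I,I_*) = C\,\frac{\sqrt{M M_*}}{(II_*)^{\de/2-1}} \int_{I'+I'_*<\Phi} \frac{\sqrt{\Phi-(I'+I'_*)}}{\Phi^{\de+(\al-1)/2}} (I'I'_*)^{\de/2-1}\,dI'dI'_*,
\end{align*}
which inherits the full Gaussian decay of $\sqrt{M M_*}$ together with a polynomial-in-$\Phi$ factor. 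For $k_2$ the derivation is more delicate: we change variables so that either $(v'_*, I'_*)$ (in the $f'_*$ integral) or $(v', I')$ (in the $f'$ integral) plays the role of $(v_*, I_*)$; this is the polyatomic analogue of the Carleman/Grad representation. After introducing angular-radial coordinates for $u'$, eliminating the Dirac deltas, and tracking the $(II_*I'I'_*)^{\de/2-1}$ factors carefully, one arrives at a bound of the schematic form
\begin{align*}
k_2(v,v_*,I,I_*) \leq \frac{C\,P(I,I_*)}{|v-v_*|}\exp\Bigl(-\tfrac{|v-v_*|^2}{8} - \tfrac{(|v|^2-|v_*|^2+2(I-I_*))^2}{32|v-v_*|^2} - \tfrac{I+I_*}{4}\Bigr),
\end{align*}
where $P$ is polynomial; the exponential decay in $I$ and $I_*$ is extracted by invoking energy conservation $\tfrac{|v|^2+|v_*|^2}{2}+I+I_* = \tfrac{|v'|^2+|v'_*|^2}{2}+I'+I'_*$ to rewrite $M'\sqrt{M'_*}/\sqrt{M}$ as $\sqrt{MM_*}$ times a quotient one can control.

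Next, the weight ratio $w(v,I)/w(v_*,I_*)$ is dominated by a polynomial factor in $(1+|v-v_*|)$ and $(1+|v_*|+\sqrt{I_*})$ (with exponent $|\be|$), and is therefore harmlessly absorbed into the exponential decay of $k_1, k_2$ in $|v-v_*|$ and $I_*$. Since the constraints $\eps \leq 1/64$ and $m \leq 1/8$ are strictly smaller than the corresponding exponential-decay constants, the extra factor $e^{\eps|v-v_*|^2}(1+I_*)^m$ is likewise absorbed with room to spare, leaving
\begin{align*}
k_w(v,v_*,I,I_*) e^{\eps|v-v_*|^2}(1+I_*)^m \leq \frac{C_{\be,\eps,m}}{|v-v_*|}\exp\Bigl(-c\bigl(|v-v_*|^2 + \tfrac{(|v|^2-|v_*|^2)^2}{|v-v_*|^2} + I + I_*\bigr)\Bigr)
\end{align*}
for some $c>0$. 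Integrating in $I_*$ gives a finite constant from the $e^{-cI_*}$ decay, while integrating in $v_*$ yields decay $\sim 1/(1+|v|)$ by the standard Carleman estimate (as in \cite{Glassey}). The residual $e^{-cI}$ factor, which is independent of the integration variables, satisfies $e^{-cI} \leq C/(1+I^{1/4})$, and combining the two decay factors yields $\frac{1}{(1+|v|)(1+I^{1/4})} \leq \frac{2}{1+|v|+I^{1/4}}$, as claimed.

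The main obstacle is the explicit derivation of $k_2$ in the polyatomic setting. In the monatomic case the Carleman representation is classical, but here we must track the dependence on four independent internal energy variables $I, I_*, I', I'_*$ through the $(II_*I'I'_*)^{\de/2-1}$ factors appearing in both $W$ and the denominators of \eqref{DefK1}--\eqref{DefK2}. A subtle point is that the factor $1/\sqrt{M}$ in the definition of $K_2$ grows exponentially in $I$, so to produce decay in $I$ one must use energy conservation to rewrite $M'\sqrt{M'_*}/\sqrt{M}$ in a form exhibiting Gaussian decay in $I$; this rewriting is ultimately what yields the $I^{1/4}$ decay in the stated bound.
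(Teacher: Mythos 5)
Your overall architecture --- explicit kernels obtained by integrating out the Dirac masses, a Carleman/Grad-type rewriting of $K_2$ so that $(v_*,I_*)$ becomes the integration variable, pointwise bounds, and a final $v_*$-integration split into $|u|\ge|v|$ and $|u|\le|v|$ --- is the same as the paper's, and your treatment of $k_1$ is correct. However, there is a genuine error in your pointwise bound for $k_2$: the factor $e^{-(I+I_*)/4}$ is not there, and the mechanism you propose for producing it fails. Rewriting $M'\sqrt{M'_*}/\sqrt{M}$ via energy conservation gives
$$\frac{M'\sqrt{M'_*}}{\sqrt M}\ \propto\ (\cdots)\; e^{-\frac{|v|^2}{4}-\frac{|v_*|^2}{2}-\frac{I}{2}-I_*}\; e^{+\frac{|v'_*|^2}{4}+\frac{I'_*}{2}},$$
so the Gaussian decay in $(v,v_*,I,I_*)$ is bought at the price of exponential \emph{growth} in the integration variables $(v'_*,I'_*)$, which cannot be controlled. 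This is the classical obstruction for the gain part of the linearized operator: already in the monatomic case $k_2(v,v_*)$ decays only in $|v-v_*|$ and in the Carleman direction $(|v|^2-|v_*|^2)/|v-v_*|$, with no decay on the diagonal $|v|=|v_*|$ beyond the relative velocity. The same happens here in the internal energies: on configurations with $\Delta J\approx 0$ there is no exponential decay in $I$ or $I_*$. Indeed, if $e^{-c(I+I_*)}$ were available, the lemma would give exponential decay in $I$ and allow arbitrary polynomial growth $m$; the stated $1/(1+I^{1/4})$ and the restriction $m\le 1/8$ should have been a warning sign.

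Consequently your last step (``the residual $e^{-cI}$ factor satisfies $e^{-cI}\le C/(1+I^{1/4})$'') has no source. In the paper the $I^{-1/4}$ decay and the integrability in $I_*,I',I'_*$ come from an entirely different place: the cross-section contributes $\Psi^{-(\de+(\al-1)/2)}$ with $\Psi=\frac{|\xi|^2}{4}+I+I'\ge \max(I,\,I_*+I'_*-\text{terms})$, and after splitting into the four regions $\{I\lessgtr1\}\times\{I_*\lessgtr1\}$ one trades part of this power for polynomial decay in $I$ and part for polynomial integrability in $I_*,I',I'_*$ (the function $\phi$ in \eqref{Defphi}); the hypothesis $\de\ge2$ enters exactly through $\de/2-3/4\ge1/4$ in \eqref{int4I}, and the bound $m\le1/8$ is tied to the resulting power $I_*^{-9/8}$, not to any exponential decay. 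A second, smaller inaccuracy: the weight ratio $w(v,I)/w(v_*,I_*)$ is \emph{not} dominated by a polynomial in $(1+|v-v_*|)$ and $(1+|v_*|+\sqrt{I_*})$ (take $v=v_*$, $I_*\to0$, $I\to\infty$); it must instead be bounded, via the conservation laws, by $C(1+|v'|+|v'_*|+\sqrt{I'}+\sqrt{I'_*})^{\be}$ and absorbed by the Gaussian decay in the primed \emph{integration} variables, as in \eqref{west}.
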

\begin{proof}
	 We have directly from the definition of $K_1$ \eqref{DefK1} that
	\begin{align*}
		k_1(v,v_*,I,I_*)=\int_{(\R^3)^2\times (\R_+)^2} W(v,v_*,I,I_*|v^\prime,v^\prime_*,I^\prime,I^\prime_*)\frac{\sqrt{MM_*}}{(I I_*)^{\de/2-1}}dv^\prime dv^\prime_* dI^\prime dI^\prime_*.
	\end{align*}
    Using similar arguments as in \eqref{1nu}, we have
    \begin{align}\label{1k1}
    	k_1&(v,v_*,I,I_*)\leq C\int_{\times\R_+\times\S^2\times\R^3\times (\R_+)^2} e^{-\frac{|v|^2}{4}-\frac{|v_*|^2}{4}-\frac{I}{2}-\frac{I_*}{2}}\frac{1}{\Phi^{\de+(\al-2)/2}}(I I_*)^{\de/4-1/2}(I^\prime I^\prime_*)^{\de/2-1} \notag\\
    	&\ \times\de_3(V-V^\prime)\de_1(\sqrt{|u|^2+4I+4I_*-4I^\prime-4I^\prime_*}-|u^\prime|)\chi_{\{|u|^2+4I+4I_*-4I^\prime-4I^\prime_*>0\}}d|u^\prime|d\om dV^\prime dI^\prime dI^\prime_*.
    \end{align}
We notice that $\de_3(V-V^\prime)\de_1(\sqrt{|u|^2+4I+4I_*-4I^\prime-4I^\prime_*}-|u^\prime|)$ implies $\Phi=\Phi^\prime$ and $$e^{-\frac{|v|^2}{8}-\frac{|v_*|^2}{8}-\frac{I}{4}-\frac{I_*}{4}}=e^{-\frac{|v^\prime|^2}{8}-\frac{|v^\prime_*|^2}{8}-\frac{I^\prime}{4}-\frac{I^\prime_*}{4}}$$ in the above integral. Then for the $\Phi^\de$ in the integral on the right hand side of \eqref{1k1}, we have 
\begin{align}\label{1E}
	\Phi^\de=\Phi^{1/2}(\Phi^\prime)^{2(\de/2-1/4)}=(\frac{|u|^2}{4}+I+I_*)^{1/2}(\frac{|u^\prime|^2}{4}+I^\prime+I^\prime_*)^{2(\de/2-1/4)}\geq(I_*)^{1/2}(I^\prime I^\prime_*)^{\de/2-1/4}
\end{align}
Combining \eqref{1k1} and \eqref{1E}, one has
\begin{align}\label{pointk1}
	k_1(v,v_*,I,I_*)&\leq C\Phi^{(2-\al)/2}I ^{\de/4-1/2}(I_*)^{\de/4-1} e^{-\frac{|v|^2}{8}-\frac{|v_*|^2}{8}-\frac{I}{4}-\frac{I_*}{4}}\int_{(\R^3)^2\times (\R_+)^2} e^{-\frac{|v^\prime|^2}{8}-\frac{|v^\prime_*|^2}{8}-\frac{I^\prime}{4}-\frac{I^\prime_*}{4}}\notag\\
	&\quad \times(I^\prime I^\prime_*)^{-3/4}\de_3(V-V^\prime)\de_1(\sqrt{|u|^2+4I+4I_*-4I^\prime-4I^\prime_*}-|u^\prime|)d|u^\prime|d\om dV^\prime dI^\prime dI^\prime_*\notag\\
	&\leq C(|v-v_*|+\sqrt{I}+\sqrt{I_*})^{(2-\al)/2}I ^{\de/4-1/2}(I_*)^{\de/4-1} e^{-\frac{|v|^2}{8}-\frac{|v_*|^2}{8}-\frac{I}{4}-\frac{I_*}{4}}\notag\\
	&\leq C(I_*)^{\de/4-1}e^{-\frac{|v|^2}{16}-\frac{|v_*|^2}{16}-\frac{I}{8}-\frac{I_*}{8}}.
\end{align}
Then we obtain
\begin{align}\label{estk1}
	&\int_{\R^3\times \R_+}k_1(v,v_*,I,I_*)\frac{w(v,I)}{w(v_*,I_*)}e^{\ep|v-v_*|^2}(1+I_*)^{m}dv_*dI_*\notag\\
	&\leq C\int_{\R^3\times \R_+}(I_*)^{\de/4-1}e^{-\frac{|v|^2}{16}-\frac{|v_*|^2}{16}-\frac{I}{8}-\frac{I_*}{8}}\frac{w(v,I)}{w(v_*,I_*)}e^{\ep|v-v_*|^2}(1+I_*)^{1/8}dv_*dI_*\notag\\
	&\leq Ce^{-\frac{|v|^2}{32}-\frac{I}{16}}
\end{align}
since $(I_*)^{\de/4-1}$ is integrable near $0$ by $\de/4-1\geq -1/2$.

Next we turn to $k_2$ which needs to be treated more carefully. First we notice in the integral of the definition of $K_2$ \eqref{DefK2}, the term $W(v,v_*,I,I_*|v^\prime,v^\prime_*,I^\prime,I^\prime_*)$ contains $$\de_3(v+v_*-v^\prime-v^\prime_*)\de_1(\frac{|v|^2}{2}+\frac{|v_*|^2}{2}+I+I_*-\frac{|v^\prime|^2}{2}-\frac{|v^\prime_*|^2}{2}-I^\prime-I^\prime_*),$$ which implies
\begin{align*}
	\sqrt{M^\prime}={I^\prime}^{\de/4-1/2}e^{-\frac{|v^\prime|^2}{4}-\frac{I^\prime}{2}}={I^\prime}^{\de/4-1/2}e^{-\frac{|v|^2}{4}-\frac{I}{2}-\frac{|v_*|^2}{4}-\frac{I_*}{2}+\frac{|v^\prime_*|^2}{4}+\frac{I^\prime_*}{2}}=\frac{(I^\prime I^\prime_*)^{\de/4-1/2}}{(I I_*)^{\de/4-1/2}}\frac{\sqrt{MM_*}}{\sqrt{M^\prime_*}},
\end{align*}
and
\begin{align*}
	\sqrt{M^\prime_*}=\frac{(I^\prime I^\prime_*)^{\de/4-1/2}}{(I I_*)^{\de/4-1/2}}\frac{\sqrt{MM_*}}{\sqrt{M^\prime}}.
\end{align*}
Then we can rewrite 
\begin{align}\label{1reK2}
	K_2f=&\frac{1}{\sqrt{M}}\int_{(\R^3)^3\times (\R_+)^3} W(v,v_*,I,I_*|v^\prime,v^\prime_*,I^\prime,I^\prime_*)\frac{\sqrt{MM_*M^\prime M^\prime_*}}{(II_*I^\prime I^\prime_*)^{\de/4-1/2}}\notag\\
	&\qquad\qquad\qquad\qquad\qquad\times\left(\frac{f^\prime_*}{\sqrt{M^\prime_*}}+\frac{f^\prime}{\sqrt{M^\prime}}\right)dv_*dv^\prime dv^\prime_* dI_*dI^\prime dI^\prime_*.
\end{align}
For the first part on the right hand side of \eqref{1reK2} containing $f^\prime_*$, we use the third equality of \eqref{ProW}, exchange $(v^\prime,I^\prime)$ and $(v^\prime_*,I^\prime_*)$, then exchange $(v^\prime,I^\prime)$ and $(v_*,I_*)$ to get 
\begin{align}\label{1partK2}
	&\frac{1}{\sqrt{M}}\int_{(\R^3)^3\times (\R_+)^3} W(v,v_*,I,I_*|v^\prime,v^\prime_*,I^\prime,I^\prime_*)\frac{\sqrt{MM_*M^\prime M^\prime_*}}{(II_*I^\prime I^\prime_*)^{\de/4-1/2}}\frac{f^\prime_*}{\sqrt{M^\prime_*}}dv_*dv^\prime dv^\prime_* dI_*dI^\prime dI^\prime_*\notag\\
	&=\frac{1}{\sqrt{M}}\int_{(\R^3)^3\times (\R_+)^3} W(v,v^\prime,I,I^\prime|v_*,v^\prime_*,I_*,I^\prime_*)\frac{\sqrt{MM_*M^\prime M^\prime_*}}{(II_*I^\prime I^\prime_*)^{\de/4-1/2}}\frac{f_*}{\sqrt{M_*}}dv_*dv^\prime dv^\prime_* dI_*dI^\prime dI^\prime_*\notag\\
	&=\int_{(\R^3)^3\times (\R_+)^3} W(v,v^\prime,I,I^\prime|v_*,v^\prime_*,I_*,I^\prime_*)\frac{\sqrt{M^\prime M^\prime_*}}{(II_*I^\prime I^\prime_*)^{\de/4-1/2}}f_*dv_*dv^\prime dv^\prime_* dI_*dI^\prime dI^\prime_*.
\end{align}
Likewise, we exchange $(v^\prime,I^\prime)$ and $(v_*,I_*)$ to get 
\begin{align}\label{2partK2}
	&\frac{1}{\sqrt{M}}\int_{(\R^3)^3\times (\R_+)^3} W(v,v_*,I,I_*|v^\prime,v^\prime_*,I^\prime,I^\prime_*)\frac{\sqrt{MM_*M^\prime M^\prime_*}}{(II_*I^\prime I^\prime_*)^{\de/4-1/2}}\frac{f^\prime}{\sqrt{M^\prime}}dv_*dv^\prime dv^\prime_* dI_*dI^\prime dI^\prime_*\notag\\
	&=\int_{(\R^3)^3\times (\R_+)^3} W(v,v^\prime,I,I^\prime|v_*,v^\prime_*,I_*,I^\prime_*)\frac{\sqrt{M^\prime M^\prime_*}}{(II_*I^\prime I^\prime_*)^{\de/4-1/2}}f_*dv_*dv^\prime dv^\prime_* dI_*dI^\prime dI^\prime_*.
\end{align}
Combining \eqref{1reK2}, \eqref{1partK2} and \eqref{2partK2}, we rewrite
\begin{align*}
	K_2f&=2\int_{(\R^3)^3\times (\R_+)^3} W(v,v^\prime,I,I^\prime|v_*,v^\prime_*,I_*,I^\prime_*)\frac{\sqrt{M^\prime M^\prime_*}}{(II_*I^\prime I^\prime_*)^{\de/4-1/2}}f_*dv_*dv^\prime dv^\prime_* dI_*dI^\prime dI^\prime_*,
\end{align*}
which yields
\begin{align}\label{rek2}
	k_2(v,v_*,I,I_*)&=2\int_{(\R^3)^2\times (\R_+)^2} W(v,v^\prime,I,I^\prime|v_*,v^\prime_*,I_*,I^\prime_*)\frac{\sqrt{M^\prime M^\prime_*}}{(II_*I^\prime I^\prime_*)^{\de/4-1/2}}dv^\prime dv^\prime_* dI^\prime dI^\prime_*.
\end{align}
Denoting $\xi=v-v^\prime$, $\xi_*=v_*-v^\prime_*$, $\Psi=\frac{|\xi|^2}{4}+I+I^\prime$,  $\zeta=(v_*-v^\prime)\cdot\frac{u}{|u|}$ and $\De J=I_*+I^\prime_*-I-I^\prime$, we have from \eqref{DefW}, \eqref{si} and properties of Dirac delta function that
\begin{align}\label{repW}
	W(v,v^\prime,I,I^\prime|v_*,v^\prime_*,I_*,I^\prime_*)=C\frac{(II_*I^\prime I^\prime_*)^{\de/2-1}}{|u|}\frac{\chi_{\{|\xi|+4I+4I^\prime>4I_*+4I^\prime_*\}}}{\Psi^{\de+(\al-1)/2}}\de_1(\zeta-\frac{\De J}{|u|})\de_3(u+u^\prime).
\end{align}
We resolve
\begin{align}\label{resolve}
	v^\prime-v_*=\eta-\zeta n,\quad n=\frac{u}{|u|}.
\end{align}
It holds that $dv^\prime dv^\prime_*=du^\prime d\zeta d\eta$.
We have from \eqref{rek2} and \eqref{repW} that
\begin{align}\label{k2}
	k_2(v,v_*,I,I_*)&=C\int_{\R^3\times\R_+\times(\R^3)^{\perp n}\times (\R_+)^2}\sqrt{M^\prime M^\prime_*} \frac{(II_*I^\prime I^\prime_*)^{\de/4-1/2}}{|u|}\frac{\chi_{\{|\xi|+4I+4I^\prime>4I_*+4I^\prime_*\}}}{\Psi^{\de+(\al-1)/2}}\notag\\
	&\qquad\qquad\qquad\times\de_1(\zeta-\frac{\De J}{|u|})\de_3(u+u^\prime)du^\prime d\zeta d\eta dI^\prime dI^\prime_*\notag\\
	&=C\int_{\R^3\times\R_+\times(\R^3)^{\perp n}\times (\R_+)^2}e^{-\frac{|v^\prime|^2}{4}-\frac{|v^\prime_*|^2}{4}-\frac{I^\prime}{2}-\frac{I^\prime_*}{2}} \frac{(II_*)^{\de/4-1/2}(I^\prime I^\prime_*)^{\de/2-1}}{|u|}\notag\\
	&\qquad\qquad\qquad\times\frac{\chi_{\{|\xi|+4I+4I^\prime>4I_*+4I^\prime_*\}}}{\Psi^{\de+(\al-1)/2}}\de_1(\zeta-\frac{\De J}{|u|})\de_3(u+u^\prime)du^\prime d\zeta d\eta dI^\prime dI^\prime_*.
\end{align}
Observe that we have $\frac{|v|^2}{2}+\frac{|v^\prime|^2}{2}+I+I^\prime=\frac{|v_*|^2}{2}+\frac{|v^\prime_*|^2}{2}+I_*+I^\prime_*$ in the integral of \eqref{k2} due to the term $\de_1(\zeta-\frac{\De J}{|u|})\de_3(u+u^\prime)$. It holds that
\begin{align*}
	1+|v|^2+I&=1+|v_*|^2+I_*+|v|^2-|v_*|^2+I-I_*\notag\\
	&\leq C(1+|v_*|^2+I_*)(1+\big|\frac{|v|^2}{2}-\frac{|v_*|^2}{2}+I-I_* \big|)\notag\\
	&= C(1+|v_*|^2+I_*)(1+\big|\frac{|v^\prime|^2}{2}-\frac{|v^\prime_*|^2}{2}+I^\prime-I^\prime_* \big|),
\end{align*}
which yields
\begin{align}\label{west}
	\frac{w(v,I)}{w(v_*,I_*)}&\leq C(1+|v^\prime|+|v^\prime_*|+\sqrt{I^\prime}+\sqrt{I^\prime_*} )^\be.
\end{align}
Also by the two Dirac delta functions we have $v-v_*=v^\prime_*-v^\prime$, which implies
\begin{align}\label{eest}
	e^{\ep|v-v_*|^2}=e^{\ep|v^\prime-v^\prime_*|^2}
\end{align}
in the integral of \eqref{k2} .
Combining \eqref{k2}, \eqref{west} and \eqref{eest}, one has
\begin{align}\label{1k2}
	&k_2(v,v_*,I,I_*)\frac{w(v,I)}{w(v_*,I_*)}e^{\ep|v-v_*|^2}\leq Ck_2(v,v_*,I,I_*)e^{\frac{|v^\prime|^2}{32}+\frac{|v^\prime_*|^2}{32}+\frac{I^\prime}{32}+\frac{I^\prime_*}{32}}\notag\\
	&\leq C\int_{\R^3\times\R_+\times(\R^3)^{\perp n}\times (\R_+)^2}e^{-\frac{|v^\prime|^2}{8}-\frac{|v^\prime_*|^2}{8}-\frac{I^\prime}{4}-\frac{I^\prime_*}{4}} \frac{(II_*)^{\de/4-1/2}(I^\prime I^\prime_*)^{\de/2-1}}{|u|}\notag\\
	&\qquad\qquad\qquad\times\frac{\chi_{\{|\xi|+4I+4I^\prime>4I_*+4I^\prime_*\}}}{\Psi^{\de+(\al-1)/2}}\de_1(\zeta-\frac{\De J}{|u|})\de_3(u+u^\prime)du^\prime d\zeta d\eta dI^\prime dI^\prime_*.
\end{align}
Resolving $V=V_\shortparallel+V_\perp$, $V_{\shortparallel}=V\cdot n$ and denoting $\cos\theta=n\cdot\frac{u}{|u|}$, it holds that
\begin{align}\label{resolve1}
	\frac{1}{8}(|v^\prime|^2+|v^\prime_*|^2)=\frac{1}{4}\left( |V_\perp+\eta|^2+\frac{(|u|-2|v|\cos\theta+2\zeta)^2}{4}+\frac{|u|^2}{4} \right).
\end{align}
Since $\Psi=\frac{|\xi|^2}{4}+I+I^\prime=\frac{|\xi_*|^2}{4}+I_*+I^\prime_*$ by $\de_1(\zeta-\frac{\De J}{|u|})\de_3(u+u^\prime)$, we have
\begin{align}\label{Psi1}
	\Psi=\frac{|\xi|^2}{4}+I+I^\prime\geq C|\xi|\sqrt{\frac{|\xi|^2}{4}+I+I^\prime-I_*-I^\prime_*}=C|\xi||\xi_*|\geq C|\eta|^2,
\end{align}
and
\begin{align*}
	\Psi^{\de-1/2}\geq \phi_i(I,I_*,I^\prime,I^\prime_*),\quad i=1,2,3,4,
\end{align*}
where the functions
\begin{align*}
	\phi_1(I,I_*,I^\prime,I^\prime_*)=&(II_*)^{\de/4-1/2}(I^\prime I^\prime_*)^{\de/4+1/4},\notag\\
	\phi_2(I,I_*,I^\prime,I^\prime_*)=&I^{\de/4-1/2}I_*^{3\de/4},\notag\\
	\phi_3(I,I_*,I^\prime,I^\prime_*)=&I^{3\de/4}I_*^{\de/4-1/2},\notag\\
	\phi_4(I,I_*,I^\prime,I^\prime_*)=&I^{3\de/4-5/4}I_*^{\de/4+3/4}.
\end{align*}
We set
\begin{align}\label{Defphi}
	\phi(I,I_*,I^\prime,I^\prime_*)=&\frac{1}{\phi_1(I,I_*,I^\prime,I^\prime_*)}\chi_{\{I\leq 1\}}\chi_{\{I_*\leq 1\}}+\frac{1}{\phi_2(I,I_*,I^\prime,I^\prime_*)}\chi_{\{I\leq 1\}}\chi_{\{I_*\geq 1\}}\notag\\
	&\qquad+\frac{1}{\phi_3(I,I_*,I^\prime,I^\prime_*)}\chi_{\{I\geq 1\}}\chi_{\{I_*\leq 1\}}+\frac{1}{\phi_4(I,I_*,I^\prime,I^\prime_*)}\chi_{\{I\geq 1\}}\chi_{\{I_*\geq 1\}}
\end{align}
to get
\begin{align}\label{Psi2}
	\frac{1}{\Psi^{\de-1/2}}\leq \phi(I,I_*,I^\prime,I^\prime_*).
\end{align}
It follows from \eqref{1k2}, \eqref{resolve1}, \eqref{Psi1} and \eqref{Psi2} that
\begin{align}\label{pointk2}
	&k_2(v,v_*,I,I_*)\frac{w(v,I)}{w(v_*,I_*)}e^{\ep|v-v_*|^2}\notag\\
	&\leq \frac{C}{|u|}\int_{\R^3\times\R_+\times(\R^3)^{\perp n}\times (\R_+)^2}e^{-\frac{(|u|-2|v|\cos\theta+2\zeta)^2+|u|^2}{16}}e^{-\frac{I^\prime}{4}-\frac{I^\prime_*}{4}} (II_*)^{\de/4-1/2}(I^\prime I^\prime_*)^{\de/2-1}\notag\\
	&\qquad\qquad\qquad\qquad\times\phi(I,I_*,I^\prime,I^\prime_*)\frac{e^{-|V_\perp+\eta|^2/4}}{|\eta|^\al}\de_1(\zeta-\frac{\De J}{|u|})\de_3(u+u^\prime)du^\prime d\zeta d\eta dI^\prime dI^\prime_*\notag\\
	&\leq \frac{C}{|u|}\int_{(\R^3)^{\perp n}\times (\R_+)^2}e^{-\frac{(|u|-2|v|\cos\theta+2\De J/|u|)^2+|u|^2}{16}}e^{-\frac{I^\prime}{4}-\frac{I^\prime_*}{4}} (II_*)^{\de/4-1/2}(I^\prime I^\prime_*)^{\de/2-1}\notag\\
	&\qquad\qquad\qquad\qquad\qquad\qquad\qquad\qquad\qquad\quad\times\phi(I,I_*,I^\prime,I^\prime_*)\frac{e^{-|V_\perp+\eta|^2/4}}{|\eta|^\al}d\eta dI^\prime dI^\prime_*\notag\\
	&\leq \frac{C}{|u|}\int_{(\R_+)^2}e^{-\frac{(|u|-2|v|\cos\theta+2\De J/|u|)^2+|u|^2}{16}}e^{-\frac{I^\prime}{4}-\frac{I^\prime_*}{4}} (II_*)^{\de/4-1/2}(I^\prime I^\prime_*)^{\de/2-1}\phi(I,I_*,I^\prime,I^\prime_*)dI^\prime dI^\prime_*.
\end{align}
We integrate to get
\begin{align}\label{k2we}
&\int_{\R^3\times\R_+}k_2(v,v_*,I,I_*)\frac{w(v,I)}{w(v_*,I_*)}e^{\ep|v-v_*|^2}(1+I_*)^mdv_*dI_*\notag\\
&\leq C\int_{(\R_+)^3\times\R^3}\frac{1}{|u|}e^{-\frac{(|u|-2|v|\cos\theta+2\De J/|u|)^2+|u|^2}{16}} \notag\\
&\qquad\qquad\qquad\qquad\times e^{-\frac{I^\prime}{4}-\frac{I^\prime_*}{4}}(II_*)^{\de/4-1/2}(I^\prime I^\prime_*)^{\de/2-1}(1+I_*)^\frac{1}{8}\phi(I,I_*,I^\prime,I^\prime_*)dI^\prime dI^\prime_*dI_*dv_*.
\end{align}
Since the term $\De J$ contains $I_*$, $I^\prime$ and $I^\prime_*$, it is difficult to integrate on these three variables. However, we can study the integral $$\int_{(\R_+)^3}e^{-\frac{I^\prime}{4}-\frac{I^\prime_*}{4}} (II_*)^{\de/4-1/2}(I^\prime I^\prime_*)^{\de/2-1}(1+I_*)^\frac{1}{8}\phi(I,I_*,I^\prime,I^\prime_*)dI^\prime dI^\prime_*dI_* $$
first. Recalling the definition of $\phi$ \eqref{Defphi}, we obtain
\begin{align}\label{int4I}
	\int&_{(\R_+)^3}e^{-\frac{I^\prime}{4}-\frac{I^\prime_*}{4}} (II_*)^{\de/4-1/2}(I^\prime I^\prime_*)^{\de/2-1}(1+I_*)^\frac{1}{8}\phi(I,I_*,I^\prime,I^\prime_*)dI^\prime dI^\prime_*dI_*\notag\\
	\leq& C\int_{(\R_+)^3}e^{-\frac{I^\prime}{4}-\frac{I^\prime_*}{4}}\left((I^\prime I^\prime_*)^{\de/4-5/4}\chi_{\{I\leq 1\}}\chi_{\{I_*\leq 1\}}+I_*^{-\de/2-3/8}(I^\prime I^\prime_*)^{\de/2-1}\chi_{\{I\leq 1\}}\chi_{\{I_*\geq 1\}}\right.\notag\\
	&\left.+I^{-\de/2-1/2}(I^\prime I^\prime_*)^{\de/2-1}\chi_{\{I\geq 1\}}\chi_{\{I_*\leq 1\}}+I^{-\de/2+3/4}I_*^{-9/8}(I^\prime I^\prime_*)^{\de/2-1}\chi_{\{I\geq 1\}}\chi_{\{I_*\geq 1\}} \right)dI^\prime dI^\prime_*dI_*\notag\\
	\leq& C\int_{\R_+}\left(\chi_{\{I\leq 1\}}\chi_{\{I_*\leq 1\}}+I_*^{-\de/2-3/8}\chi_{\{I\leq 1\}}\chi_{\{I_*\geq 1\}}\right.\notag\\
	&\left.\qquad\qquad\qquad+I^{-\de/2-1/2}\chi_{\{I\geq 1\}}\chi_{\{I_*\leq 1\}}+I^{-\de/2+3/4}I_*^{-9/8}\chi_{\{I\geq 1\}}\chi_{\{I_*\geq 1\}} \right)dI_*\notag\\
	\leq& \frac{C}{1+I^{1/4}}.
\end{align} 
The last inequality above holds since $\de/2-3/4\geq 1/4$. Now we consider \eqref{k2we}. First we have directly from \eqref{k2we} and \eqref{int4I} that
\begin{align}\label{k21}
	&\int_{\R^3\times\R_+}k_2(v,v_*,I,I_*)\frac{w(v,I)}{w(v_*,I_*)}e^{\ep|v-v_*|^2}(1+I_*)^\frac{1}{8}dv_*dI_*\notag\\
	&\leq C\int_{(\R_+)^3\times\R^3}\frac{1}{|u|}e^{-\frac{|u|^2}{16}}e^{-\frac{I^\prime}{4}-\frac{I^\prime_*}{4}} (II_*)^{\de/4-1/2}(I^\prime I^\prime_*)^{\de/2-1}(1+I_*)^\frac{1}{8}\phi(I,I_*,I^\prime,I^\prime_*)dI^\prime dI^\prime_*dI_*dv_*\notag\\
	&\leq \frac{C}{(1+I^{1/4})}.
\end{align}
Then if $|u|\geq |v|$,
\begin{align}\label{k22}
	&\int_{\R^3\times\R_+}k_2(v,v_*,I,I_*)\frac{w(v,I)}{w(v_*,I_*)}e^{\ep|v-v_*|^2}(1+I_*)^\frac{1}{8}dv_*dI_*\notag\\
	&\leq \frac{1}{|v|}\int_{(\R_+)^3\times\R^3}e^{-\frac{|u|^2}{16}}e^{-\frac{I^\prime}{4}-\frac{I^\prime_*}{4}} (II_*)^{\de/4-1/2}(I^\prime I^\prime_*)^{\de/2-1}(1+I_*)^\frac{1}{8}\phi(I,I_*,I^\prime,I^\prime_*)dI^\prime dI^\prime_*dI_*dv_*\notag\\
	&\leq \frac{C}{|v|(1+I^{1/4})}.
\end{align}
If $|u|\leq |v|$, we set $r=|u|$ and use the relation $v\cdot(v-v_*)=r|v|\cos\theta$ to get
\begin{align}\label{k23}
	&\int_{\R^3\times\R_+}k_2(v,v_*,I,I_*)\frac{w(v,I)}{w(v_*,I_*)}e^{\ep|v-v_*|^2}(1+I_*)^\frac{1}{8}dv_*dI_*\notag\\
	&\leq C\int_{(\R_+)^3}\left(\int_0^{|v|}\int_0^\pi re^{-\frac{(r-2|v|\cos\theta+2\De J/r)^2+r^2}{16}}\sin\theta d\theta dr\right)\notag\\
	&\qquad\qquad\qquad\qquad\times e^{-\frac{I^\prime}{4}-\frac{I^\prime_*}{4}} (II_*)^{\de/4-1/2}(I^\prime I^\prime_*)^{\de/2-1}(1+I_*)^\frac{1}{8}\phi(I,I_*,I^\prime,I^\prime_*)dI^\prime dI^\prime_*dI_*\notag\\
	&\leq \frac{C}{|v|}\int_{(\R_+)^3}\left(\int_0^{|v|}\int_{r+2\De J/r-2|v|}^{r+2\De J/r+2|v|} re^{-\frac{y^2+r^2}{16}}dy dr\right)\notag\\
	&\qquad\qquad\qquad\qquad\times e^{-\frac{I^\prime}{4}-\frac{I^\prime_*}{4}} \frac{(II_*)^{\de/4-1/2}(I^\prime I^\prime_*)^{\de/2-1}(1+I_*)^\frac{1}{8}}{\phi(I,I_*,I^\prime,I^\prime_*)}dI^\prime dI^\prime_*dI_*\notag\\
	&\leq \frac{C}{|v|(1+I^{1/4})}.
\end{align}
It follows from \eqref{k21}, \eqref{k22} and \eqref{k23} that
\begin{align}\label{estk2}
	&\int_{\R^3\times\R_+}k_2(v,v_*,I,I_*)\frac{w(v,I)}{w(v_*,I_*)}e^{\ep|v-v_*|^2}(1+I_*)^\frac{1}{8}dv_*dI_*\notag
\\	
	&\leq \frac{C}{1+|v|+I^{1/4}}.
\end{align}
We obtain \eqref{estk} by \eqref{estk1} and \eqref{estk2}. Then Lemma \ref{lemmaK} is proven.
\end{proof}

For the nonlinear operator $\Ga$, we have the following lemma.
\begin{lemma}\label{lebound}
	Recall the definition of $\Ga$ \eqref{DefGa}. We have
	\begin{align}\label{estGa}
		\left|w(v,I)\Ga(f,f)(t,x,v,I) \right|\leq C\nu(v,I)\|(wf)(t)\|^2_\infty.
	\end{align}
\end{lemma}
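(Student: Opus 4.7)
The plan is to split $\Gamma(f,f) = \Gamma_+(f,f)-\Gamma_-(f,f)$ following \eqref{DefGa} and bound each piece by $C\nu(v,I)\|(wf)(t)\|_\infty^2$ separately. In both cases the idea is to pull factors of the form $\|wf\|_\infty/w$ out of $f$, $f_*$, $f'$, $f_*'$ and then reduce the remaining integral to an integral that can be evaluated by exactly the same Dirac-delta manipulations already performed in \eqref{1nu}--\eqref{upper} of Lemma \ref{lenu}.

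For the loss term, observe that $f$ does not depend on the integration variables, so
$$
\Gamma_-(f,f)(v,I)=f(v,I)\int W\,\frac{\sqrt{M_*}\,f_*}{(II_*)^{\de/2-1}}\,dv_*\,dv'\,dv'_*\,dI_*\,dI'\,dI'_*.
$$
Using $|f|\le \|wf\|_\infty/w(v,I)$ and $|f_*|\le \|wf\|_\infty/w(v_*,I_*)$ gives
$$
w(v,I)|\Gamma_-(f,f)|\le \|wf\|_\infty^2\int W\,\frac{\sqrt{M_*}}{w(v_*,I_*)(II_*)^{\de/2-1}}\,dv_*\,dv'\,dv'_*\,dI_*\,dI'\,dI'_*.
$$
The right-hand side has the same structural form as $\nu(v,I)$ except that $M_*$ is replaced by $\sqrt{M_*}/w(v_*,I_*)$. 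Repeating \eqref{1nu}--\eqref{3nu} verbatim and using the upper estimate \eqref{upper} (the polynomial weight $1/w(v_*,I_*)$ is absorbed into the Gaussian $e^{-|v_*|^2/4}$ and the exponential $e^{-I_*/2}$) yields the bound $C(1+|v|+\sqrt{I})^{2-\al}\le C\nu(v,I)$ by Lemma \ref{lenu}.

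For the gain term, the key observation is that the Dirac deltas in $W$ enforce the microscopic conservation $\tfrac12|v|^2+\tfrac12|v_*|^2+I+I_*=\tfrac12|v'|^2+\tfrac12|v'_*|^2+I'+I'_*$, which gives the identity
$$
\frac{M'M'_*}{MM_*}=\frac{(I'I'_*)^{\de/2-1}}{(II_*)^{\de/2-1}}
$$
on the collision manifold. Consequently,
$$
\frac{1}{\sqrt{M}}\cdot\frac{\sqrt{M'M'_*}}{(I'I'_*)^{\de/2-1}}=\frac{\sqrt{M_*}}{(II_*)^{(\de/2-1)/2}(I'I'_*)^{(\de/2-1)/2}}.
$$
The same conservation law yields $1+|v|+\sqrt{I}\le C(1+|v'|+\sqrt{I'})(1+|v'_*|+\sqrt{I'_*})$ and therefore $w(v,I)\le C\,w(v',I')\,w(v'_*,I'_*)$. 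Plugging the bounds $|f'|\le\|wf\|_\infty/w(v',I')$ and $|f'_*|\le\|wf\|_\infty/w(v'_*,I'_*)$ into the integral for $\Gamma_+$ then gives
$$
w(v,I)|\Gamma_+(f,f)|\le C\|wf\|_\infty^2\int W\,\frac{\sqrt{M_*}}{(II_*)^{(\de/2-1)/2}(I'I'_*)^{(\de/2-1)/2}}\,dv_*\,dv'\,dv'_*\,dI_*\,dI'\,dI'_*,
$$
and this integral is again of the type treated in Lemma \ref{lenu}; carrying out the Dirac-delta reductions exactly as in \eqref{1nu}--\eqref{3nu} bounds it by $C(1+|v|+\sqrt{I})^{2-\al}\le C\nu(v,I)$.

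The main technical obstacle is the book-keeping of the $I$, $I_*$, $I'$, $I'_*$ exponents in the gain term: after the delta-reductions one must verify that the surviving powers of the internal-energy variables remain integrable against the exponential factor $e^{-I'/2-I'_*/2}$ (and, for the upper cutoff $\chi_{\{I'+I'_*<\Phi\}}$, against $\Phi$). This is where the standing assumption $\de\ge 2$ enters crucially, since it prevents the appearance of non-integrable singularities at $I'=0$ or $I'_*=0$, exactly as in the upper-bound computation \eqref{3nu}--\eqref{upper} of Lemma \ref{lenu}.
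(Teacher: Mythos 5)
Your argument is correct and is essentially the paper's own proof: the same splitting into gain and loss terms, the same collision-manifold identities used to rewrite $\sqrt{M^\prime M^\prime_*}/\sqrt{M}$ and to obtain $w(v,I)\le C\,w(v^\prime,I^\prime)w(v^\prime_*,I^\prime_*)$, and the same reduction of the remaining integrals to the Dirac-delta computation of Lemma \ref{lenu}. The one detail you gesture at rather than carry out is that the reduced gain-term integral is not literally of the form in \eqref{1nu}--\eqref{3nu}: it carries an extra factor $I^{\de/4-1/2}$ growing in the non-integrated variable $I$, which the paper absorbs through the surplus negative power of $\Phi$ via $\Phi^{\de/2-1}\ge C(II_*)^{\de/4-1/2}$ in \eqref{estGa+3} --- precisely the ``bookkeeping against $\Phi$'' you flag in your closing paragraph.
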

\begin{proof}
	We write
	\begin{align*}
		&w(v,I)\Ga(f,f)(t,x,v,I)\notag\\
		&=\frac{w(v,I)}{\sqrt{M}}\int_{(\R^3)^3\times (\R_+)^3} W(v,v_*,I,I_*|v^\prime,v^\prime_*,I^\prime,I^\prime_*)\notag\\&\qquad\qquad\qquad\qquad\qquad\times\left(\frac{\sqrt{M^\prime M^\prime_*}}{(I^\prime I^\prime_*)^{\de/2-1}}f^\prime f^\prime_*-\frac{\sqrt{MM_*}}{(II_*)^{\de/2-1}}ff_*\right)dv_*dv^\prime dv^\prime_* dI_*dI^\prime dI^\prime_*\notag\\
		&=w(v,I)\Ga_+(f,f)(t,x,v,I)-w(v,I)\Ga_-(f,f)(t,x,v,I).
	\end{align*}
We first prove \eqref{estGa} for $\Ga_-$. A direct calculation shows that
\begin{align}\label{estGa-1}
	\left|[w\Ga_-(f,f)](t,x,v,I)\right|\leq \|(wf)(t)\|^2_\infty \int_{(\R^3)^3\times (\R_+)^3} W(v&,v_*,I,I_*|v^\prime,v^\prime_*,I^\prime,I^\prime_*)\notag\\
	&\times\frac{\sqrt{M_*}}{(I I_*)^{\de/2-1}}dv_*dv^\prime dv^\prime_* dI_*dI^\prime dI^\prime_*.
\end{align}
Applying change of variables $(v^\prime,v^\prime_*)\rightarrow(u^\prime,V^\prime)$ and using similar arguments in \eqref{1nu}, \eqref{2nu} and \eqref{3nu}, it holds that
\begin{align}\label{estGa-2}
	&\int_{(\R^3)^3\times (\R_+)^3} W(v,v_*,I,I_*|v^\prime,v^\prime_*,I^\prime,I^\prime_*)\frac{\sqrt{M_*}}{(I I_*)^{\de/2-1}}dv_*dv^\prime dv^\prime_* dI_*dI^\prime dI^\prime_*\notag\\
	&\leq C\int_{\R^3\times (\R_+)^3} e^{-|v_*|^2/4-I_*/2}\frac{\sqrt{\Phi-(I^\prime+I^\prime_*)}}{\Phi^{\de+(\al-1)/2}}{I_*}^{\de/4-1/2}(I^\prime I^\prime_* )^{\de/2-1} \chi_{\{I^\prime+I^\prime_*<\Phi\}}dv_* dI_*dI^\prime dI^\prime_*\notag\\
	&\leq C\int_{\R^3\times \R_+} e^{-|v_*|^2/4-I_*/2}I_*^{\de/4-1/2}(|v-v_*|+\sqrt{I}+\sqrt{I_*})^{2-\al} dv_* dI_*\notag\\
	& \leq C \nu(v,I).
\end{align}
It follows from \eqref{estGa-1} and \eqref{estGa-2} that
	\begin{align}\label{estGa-}
	\left|w(v,I)\Ga_-(f,f)(t,x,v,I) \right|\leq C\nu(v,I)\|(wf)(t)\|^2_\infty.
\end{align}
For the gain term, notice, we have
\begin{align*}
	w(v,I)\leq C(1+|v|^2+I)^{\frac{\be}{2}}\leq C(1+|v^\prime|^2+I^\prime+|v^\prime_*|^2+I^\prime_*)^\frac{\be}{2}\leq Cw(v^\prime,I^\prime)w(v^\prime_*,I^\prime_*)
\end{align*}
 by the Dirac delta functions in the term $W(v,v_*,I,I_*|v^\prime,v^\prime_*,I^\prime,I^\prime_*)$.
Then we use the above observation and take the $L^\infty$ norm to get
\begin{align}\label{estGa+1}
	\left|[w\Ga_+(f,f)](t,x,v,I)\right|&\leq C\|(wf)(t)\|^2_\infty \int_{(\R^3)^3\times (\R_+)^3} W(v,v_*,I,I_*|v^\prime,v^\prime_*,I^\prime,I^\prime_*)\notag\\
	&\qquad\qquad\qquad\qquad\times\frac{\sqrt{M^\prime M^\prime_*}}{\sqrt{M}}\frac{1}{(I^\prime I^\prime_*)^{\de/2-1}}dv_*dv^\prime dv^\prime_* dI_*dI^\prime dI^\prime_*\notag\\
	&=C\|(wf)(t)\|^2_\infty \int_{(\R^3)^3\times (\R_+)^3} W(v,v_*,I,I_*|v^\prime,v^\prime_*,I^\prime,I^\prime_*)\notag\\
	&\qquad\qquad\quad \times e^{-|v_*|^2/4-I_*/2}\frac{1}{(II^\prime I^\prime_*)^{\de/4-1/2}}dv_*dv^\prime dv^\prime_* dI_*dI^\prime dI^\prime_*.
\end{align}
Again we use the change of variables to get  
\begin{align}\label{estGa+2}
	&\int_{(\R^3)^3\times (\R_+)^3} W(v,v_*,I,I_*|v^\prime,v^\prime_*,I^\prime,I^\prime_*)e^{-|v_*|^2/4-I_*/2}\frac{1}{(II^\prime I^\prime_*)^{\de/4-1/2}}dv_*dv^\prime dv^\prime_* dI_*dI^\prime dI^\prime_*\notag\\
	&\leq C\int_{\R^3\times (\R_+)^3} e^{-|v_*|^2/4-I_*/2}\frac{\sqrt{\Phi-(I^\prime+I^\prime_*)}}{\Phi^{\de+(\al-1)/2}}I^{\de/4-1/2}{I_*}^{\de/2-1}(I^\prime I^\prime_* )^{\de/4-1/2} \chi_{\{I^\prime+I^\prime_*<\Phi\}}dv_* dI_*dI^\prime dI^\prime_*\notag\\
	&\leq C\int_{\R^3\times \R_+} e^{-|v_*|^2/4-I_*/2}\frac{1}{\Phi^{\de+(\al-2)/2}}I^{\de/4-1/2}{I_*}^{\de/2-1}\notag\\
	&\qquad\qquad\qquad\qquad\qquad\qquad\qquad\qquad\times\left(\int_{\R_+}(I^\prime )^{\de/4-1/2}\chi_{\{I^\prime<\Phi/4\}}dI^\prime\right)^2  \chi_{\{I^\prime+I^\prime_*<\Phi\}}dv_* dI_*\notag\\
	&\leq C\int_{\R^3\times\R_+} e^{-|v_*|^2/4-I_*/2}I^{\de/4-1/2}{I_*}^{\de/2-1}\frac{1}{\Phi^{\de/2-1}}\Phi^{(2-\al)/2}dv_* dI_*.
\end{align}
Using $\Phi^{\de/2-1}\geq C(II_*)^{\de/4-1/2}$, one has
\begin{align}\label{estGa+3}
	&C\int_{\R^3\times\R_+} e^{-|v_*|^2/4-I_*/2}I^{\de/4-1/2}{I_*}^{\de/2-1}\frac{1}{\Phi^{\de/2-1}}\Phi^{(2-\al)/2}dv_* dI_*\notag\\
	&\leq C\int_{\R^3\times\R_+} e^{-|v_*|^2/4-I_*/2}{I_*}^{\de/4-1/2}\Phi^{(2-\al)/2}dv_* dI_*\notag\\
	&\leq C\nu(v,I).
\end{align}
Then by \eqref{estGa+1}, \eqref{estGa+2} and \eqref{estGa+3}, we obtain
\begin{align}\label{estGa+}
	\left|w(v,I)\Ga_+(f,f)(t,x,v,I) \right|\leq C\nu(v,I)\|(wf)(t)\|^2_\infty.
\end{align}
Thus, \eqref{estGa} follows from \eqref{estGa-} and \eqref{estGa+}.
\end{proof}

\section{Local-in-time Existence}
In this section, we prove Theorem \ref{local}. Construct the iteration sequence in the following way for $n=0,1,2,\dots$
\begin{align*}
	&\dis \{\pa_t+v\cdot \na_x\} F^{n+1}+F^{n+1}\int_{(\R^3)^3\times (\R_+)^3} W(v,v_*,I,I_*|v^\prime,v^\prime_*,I^\prime,I^\prime_*)\frac{F_*^n}{(II_*)^{\de/2-1}}dv_*dv^\prime dv^\prime_* dI_*dI^\prime dI^\prime_*\notag\\&=Q_+(F^{n},F^{n}), 
\end{align*}
$F(0,x,v,I)=F_0(x,v,I)$ and $F^0(t,x,v,I)=M(v,I)$.

Integrating along the characteristic, we get
\begin{align}\label{1stappro}
	\dis F^{n+1}(t,x,v,I)=&e^{-{\int^t_0g^n(\tau,x-v(t-\tau),v,I)d\tau}}F_0(x-vt,v,I) \notag\\
	&+\int_0^t e^{-{\int^t_sg^n(\tau,x-v(t-\tau),v,I)d\tau}}Q_+(F^n,F^n)(s,x-v(t-s),v,I)ds,
\end{align}
where 
\begin{align*}
	g^n(t,x,v,I)=\int_{(\R^3)^3\times (\R_+)^3} W(v,v_*,I,I_*|v^\prime,v^\prime_*,I^\prime,I^\prime_*)\frac{F_*^n}{(II_*)^{\de/2-1}}dv_*dv^\prime dv^\prime_* dI_*dI^\prime dI^\prime_*.
\end{align*}
The positivity is clear to be observed from \eqref{1stappro} since if $F^n\geq 0$, then $Q_+(F^n,F^n)\geq 0$. Now we rewrite the approximation sequence in terms of $f^n$. Substituting $F^n=M+\sqrt{M}f^n$ into \eqref{1stappro}, we have
\begin{align}\label{appro}
	\dis f^{n+1}(t,x,v,I)=&e^{-{\int^t_0g^n(\tau,x-v(t-\tau),v,I)d\tau}}f_0(x-vt,v,I) \notag\\
	&+\int_0^t e^{-{\int^t_sg^n(\tau,x-v(t-\tau),v,I)d\tau}}(Kf^n)(s,x-v(t-s),v,I)ds \notag\\
	&+\int_0^t e^{-{\int^t_sg^n(\tau,x-v(t-\tau),v,I)d\tau}}\Ga_+(f^n,f^n)(s,x-v(t-s),v,I)ds, 
\end{align}
and $$g^n(t,x,v,I)=\int_{(\R^3)^3\times (\R_+)^3} W(v,v_*,I,I_*|v^\prime,v^\prime_*,I^\prime,I^\prime_*)\frac{\left( M_*+\sqrt{M_*}f_*^n\right)}{(II_*)^{\de/2-1}}dv_*dv^\prime dv^\prime_* dI_*dI^\prime dI^\prime_*,$$
with $f^{n+1}(0,x,v)=f_0(x,v)$ and $f^0(t,x,v)=0$.

From \eqref{appro}, we have the following lemmas, which imply that our approximation sequence is bounded and also a Cauchy sequence.
\begin{lemma}
	For $\be>5$, we have
	\begin{align}\label{LE2}
		\dis \sup_{0\leq t\leq T_1}\left\|(w f^{n+1})(t)\right\|_{\infty}\leq 2\|w_\be f_0\|_{\infty},
	\end{align}
    where $T_1$ is defined in \eqref{T_1}.
\end{lemma}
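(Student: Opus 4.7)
The plan is to prove \eqref{LE2} by induction on $n$, starting from $f^0\equiv 0$ and exploiting the explicit representation \eqref{appro}. For the base case $n=0$, \eqref{appro} collapses to $f^1(t,x,v,I)=e^{-\nu(v,I)t}f_0(x-vt,v,I)$, so $\|wf^1(t)\|_\infty\leq\|wf_0\|_\infty\leq 2\|wf_0\|_\infty$ trivially. Positivity $F^n\geq 0$ propagates along the iteration by inspection of \eqref{1stappro}, which ensures $g^n\geq 0$ and hence that the integrating factor satisfies $e^{-\int_s^t g^n(\tau)\,d\tau}\leq 1$; I use this freely below.

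For the inductive step I assume $\sup_{0\leq s\leq T_1}\|wf^n(s)\|_\infty\leq 2\|wf_0\|_\infty$ and multiply \eqref{appro} by $w(v,I)$ to estimate three terms. The streamed initial datum is bounded by $\|wf_0\|_\infty$. For the linear integral term I write $w(v,I)Kf^n(s,\cdot,v,I)=\int k_w(v,v_*,I,I_*)(wf^n)(s,\cdot,v_*,I_*)\,dv_*dI_*$ and apply Lemma \ref{lemmaK} with $\ep=m=0$ to get $\int k_w\,dv_*dI_*\leq C$, yielding the bound $2CT_1\|wf_0\|_\infty$.

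The nonlinear term is the delicate one. Lemma \ref{lebound} furnishes only the pointwise estimate $|w\Ga_+(f^n,f^n)|\leq C\nu(v,I)\|wf^n(s)\|^2_\infty$, and $\nu(v,I)$ is unbounded in $(v,I)$ whenever $\al<2$. The plan is to absorb this growth through the exponential factor $e^{-\int_s^t g^n d\tau}$ rather than simply discarding it. Decomposing $g^n=\nu+\tilde K_1^{(0)}f^n$ with $\tilde K_1^{(0)}f^n=\int W\sqrt{M_*}f^n_*/(II_*)^{\de/2-1}\,dv_*dv'dv'_*dI_*dI'dI'_*$ and running arguments parallel to those of Lemma \ref{lenu}, one gets $|\tilde K_1^{(0)}f^n|\leq C\nu(v,I)\|wf^n\|_\infty$, so that
\[
\int_s^t g^n(\tau,\cdot,v,I)\,d\tau\geq (t-s)\nu(v,I)\bigl(1-C\|wf^n\|_\infty\bigr).
\]
Combined with the identity $\int_0^t e^{-(t-s)\nu}\nu\,ds=1-e^{-t\nu}\leq 1$, this yields $\int_0^t e^{-\int_s^t g^n d\tau}\nu(v,I)\,ds\leq C$ uniformly in $(v,I)$ on the short interval $[0,T_1]$, so the nonlinear integral is controlled by $C\|wf^n\|^2_\infty\leq 4C\|wf_0\|^2_\infty$.

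Summing the three contributions gives
\[
\|wf^{n+1}(t)\|_\infty\leq \|wf_0\|_\infty\bigl(1+2CT_1+4C\|wf_0\|_\infty\bigr)
\]
on $[0,T_1]$, and the constant $C_1$ in \eqref{T_1} is chosen so that $2CT_1+4C\|wf_0\|_\infty\leq 1$, thereby closing the induction $\|wf^{n+1}(t)\|_\infty\leq 2\|wf_0\|_\infty$. The main obstacle throughout is the third term: without using the exponential weight $e^{-\int g^n}$ to cancel the unbounded $\nu(v,I)$ coming from Lemma \ref{lebound}, the naive bound would produce a factor that blows up in $(v,I)$, so the careful comparison of $g^n$ and $\nu$ via the structure of $\tilde K_1^{(0)}f^n$, made possible by the smallness of $T_1(1+\|wf_0\|_\infty)$, is the crux of the argument.
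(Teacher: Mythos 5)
Your treatment of the streamed datum and of the $K$ term matches the paper's, but your handling of the gain term $\Ga_+$ contains a genuine gap. You try to absorb the unbounded factor $\nu(v,I)$ from Lemma \ref{lebound} into the damping $e^{-\int_s^t g^n\,d\tau}$ via the lower bound $g^n\geq\nu(v,I)\bigl(1-C\|wf^n\|_\infty\bigr)$. This bound is only useful when $C\|wf^n\|_\infty<1$, i.e.\ when the data are small; but Theorem \ref{local} is a local existence result for \emph{arbitrary} bounded data ($\|wf_0\|_\infty<\infty$, no smallness), and the inductive hypothesis only gives $\|wf^n\|_\infty\leq 2\|wf_0\|_\infty$, which may be huge. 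When $1-C\|wf^n\|_\infty\leq 0$ the best you retain is $e^{-\int g^n}\leq 1$, and then $\int_0^t\nu(v,I)\,ds=t\,\nu(v,I)$ is unbounded in $(v,I)$, so your claimed uniform bound $\int_0^t e^{-\int_s^t g^n}\nu\,ds\leq C$ fails. Moreover, even granting small data, the identity $\int_0^t e^{-(t-s)\nu\theta}\nu\,ds\leq 1/\theta$ produces a contribution $C\|wf^n\|^2_\infty$ with \emph{no} factor of $T_1$, so your final display $\|wf^{n+1}\|_\infty\leq\|wf_0\|_\infty(1+2CT_1+4C\|wf_0\|_\infty)$ cannot be closed: the constant $C_1$ enters only through $T_1$, so choosing it large shrinks $T_1$ but does nothing to force $4C\|wf_0\|_\infty\leq 1$. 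That inequality is a smallness condition on the data, which is not available.

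The paper circumvents this entirely by \emph{not} invoking Lemma \ref{lebound} for this step. Instead it distributes the weight onto the post-collisional variables via $w(v,I)\leq C\bigl(w(v^\prime,I^\prime)+w(v^\prime_*,I^\prime_*)\bigr)$ (a consequence of the Dirac deltas in $W$), so that both factors $f^{n\prime}$ and $f^{n\prime}_*$ are measured in $\|wf^n\|_\infty$ while a leftover $1/w(v^\prime,I^\prime)$ or $1/w(v^\prime_*,I^\prime_*)$ remains inside the collision integral. The resulting quantities $G_{21}$, $G_{22}$ are then shown to be bounded \emph{uniformly in $(v,I)$} — this is exactly where $\be>5$ is used, to make $\int |v-v_*|^{-1}w(v_*,I_*)^{-1}\,dv_*dI_*$ finite after a change of collisional variables — yielding $G_2\leq CT\|wf^n\|_\infty^2$ with the crucial factor $T$. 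With that factor, $T_1=\frac{1}{8C_1(1+\|wf_0\|_\infty)}$ closes the induction for data of arbitrary size. To repair your argument you would need a $(v,I)$-uniform, $T$-proportional bound on the gain term, which your route does not supply.
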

\begin{proof}
	We assume $(t,x,v)\in[0,T]\times \T^3 \times \R^3\times \R_+$. A direct calculation shows that
	\begin{align}\label{estwfn1}
		|(w f^{n+1})(t,x,v,I)|&\leq \|w f_0\|_{\infty}+\int_0^t \left|w(v,I)(Kf^n)(s,x-v(t-s),v)\right|ds\notag\\
		&\qquad\qquad\qquad\qquad+\int_0^t \left|w_\beta(v)\Ga_+(f^n,f^n)(s,x-v(t-s),v,I)\right|ds\notag \\
		&=\|w f_0\|_{\infty}+G_1+G_2.
	\end{align}
It follows from Lemma \ref{lemmaK} that
\begin{align}\label{estG1}
	G_1&=\int_0^t \left|w(v,I)(Kf^n)(s,x-v(t-s),v)\right|ds\notag\\
	&=\int_0^t \left|\int_{\R^3\times \R_+}k_w(v,v_*,I,I_*)(wf^n)(s,x-v(t-s),v_*,I_*)dv_*dI_*\right|ds\notag\\
	&\leq \|wf^n\|_{\infty}\int_0^t \left|\int_{\R^3\times \R_+}k_w(v,v_*,I,I_*)dv_*dI_*\right|ds\notag\\
	&\leq CT\|wf^n\|_{\infty}.
\end{align}
For $G_2$, we first write
\begin{align}\label{rewGa+}
	w(v,I)\Ga_+(f^n,f^n)(t,x,v,I)=\frac{w(v,I)}{\sqrt{M}}&\int_{(\R^3)^3\times (\R_+)^3} W(v,v_*,I,I_*|v^\prime,v^\prime_*,I^\prime,I^\prime_*)\notag\\
	&\quad\times\frac{\sqrt{M^\prime M^\prime_*}}{(I^\prime I^\prime_*)^{\de/2-1}}(f^n)^\prime (f^n)^\prime_*dv_*dv^\prime dv^\prime_* dI_*dI^\prime dI^\prime_*.
\end{align}
Due to the Dirac delta functions in $W(v,v_*,I,I_*|v^\prime,v^\prime_*,I^\prime,I^\prime_*)$, we have 
\begin{align}\label{estw}
	w(v,I)&\leq w(v,I)\chi_{\{|v|^2/2+I\leq 2(|v^\prime|^2/2+I^\prime)\}}+w(v,I)\chi_{\{|v|^2/2+I\leq 2(|v^\prime_*|^2/2+I^\prime_*)\}}\notag\\
	&\leq C(w(v^\prime,I^\prime)+w(v^\prime_*,I^\prime_*)).
\end{align}
Combining \eqref{rewGa+} and \eqref{estw}, one has
\begin{align}\label{estwGa+1}
	&\left|w(v,I)\Ga_+(f^n,f^n)(t,x,v,I)\right|\notag\\
	&\leq\|wf^n\|^2_{\infty}\frac{C}{\sqrt{M}}\int_{(\R^3)^3\times (\R_+)^3} W(v,v_*,I,I_*|v^\prime,v^\prime_*,I^\prime,I^\prime_*)\frac{\sqrt{M^\prime M^\prime_*}}{(I^\prime I^\prime_*)^{\de/2-1}}\notag\\
	&\qquad\qquad\qquad\qquad\qquad\times\left(\frac{1}{w(v^\prime,I^\prime)}+\frac{1}{w(v^\prime_*,I^\prime_*)}\right)dv_*dv^\prime dv^\prime_* dI_*dI^\prime dI^\prime_*\notag\\
	&=\|wf^n\|^2_{\infty}(G_{21}+G_{22}).
\end{align}
We now estimate 
$$
G_{21}=\frac{C}{\sqrt{M}}\int_{(\R^3)^3\times (\R_+)^3} W(v,v_*,I,I_*|v^\prime,v^\prime_*,I^\prime,I^\prime_*)\frac{\sqrt{M^\prime M^\prime_*}}{(I^\prime I^\prime_*)^{\de/2-1}}\frac{1}{w(v^\prime,I^\prime)}dv_*dv^\prime dv^\prime_* dI_*dI^\prime dI^\prime_*.
$$
By exchanging $(v^\prime,I^\prime)$ and $(v^*,I^*)$, we rewrite
\begin{align*}
	G_{21}=C\int_{(\R^3)^3\times (\R_+)^3} W(v,v^\prime,I,I^\prime|v_*,v^\prime_*,I_*,I^\prime_*)\frac{\sqrt{M_* M^\prime_*}}{\sqrt{M}}\frac{1}{(I_* I^\prime_*)^{\de/2-1}}\frac{1}{w(v_*,I_*)}dv_*dv^\prime dv^\prime_* dI_*dI^\prime dI^\prime_*.
\end{align*}
Using \eqref{DefW}, \eqref{si} and the decomposition \eqref{resolve}, it holds that
\begin{align*}
	G_{21}&\leq C\int_{\R^3\times\R_+\times(\R^3)^{\perp n}\times (\R_+)^2\times\R^3\times\R_+}e^{-\frac{|v^\prime|^2}{4}-\frac{I^\prime}{2}} \frac{(II_*)^{\de/4-1/2}(I^\prime )^{\de/2-1}(I^\prime_* )^{\de/4-1/2}}{|u|w(v_*,I_*)}\notag\\
	&\qquad\qquad\qquad\times\frac{1}{\Psi^{\de+(\al-1)/2}}\de_1(\zeta-\frac{\De J}{|u|})\de_3(u+u^\prime)du^\prime d\zeta d\eta dI^\prime dI^\prime_*dv_*dI_*.
\end{align*}
By \eqref{Psi1} and
\begin{align*}
	\frac{1}{\Psi^{\de-1/2}}\leq \bar{\phi}(I,I_*,I^\prime,I^\prime_*),
\end{align*}
where
\begin{align}\label{Defbarphi}
	\bar{\phi}(I,I_*,I^\prime,I^\prime_*)=\frac{1}{(II_*)^{\de/4-1/2}(I^\prime_*)^{\de/2+1/2}}\chi_{\{I^\prime_*> 1\}}+\frac{1}{(II_*)^{\de/4-1/2}(I^\prime)^{\de/2-1/4}(I^\prime_*)^{3/4}}\chi_{\{I^\prime_*\leq 1\}},
\end{align}
we get
\begin{align}\label{1estG21}
	G_{21}&\leq C\int_{\R^3\times\R_+\times(\R^3)^{\perp n}\times (\R_+)^2\times\R^3\times\R_+}e^{-\frac{I^\prime}{2}} \frac{(II_*)^{\de/4-1/2}(I^\prime )^{\de/2-1}(I^\prime_* )^{\de/4-1/2}}{|u|w(v_*,I_*)}\bar{\phi}(I,I_*,I^\prime,I^\prime_*)\notag\\
	&\qquad\qquad\qquad\times\frac{e^{-|v_*+\eta-\zeta n|^2/4}}{|\eta|^\al}\de_1(\zeta-\frac{\De J}{|u|})\de_3(u+u^\prime)du^\prime d\zeta d\eta dI^\prime dI^\prime_*dv_*dI_*\notag\\
	&\leq C\int_{\R^3\times\R_+}\left(\int_{(\R_+)^2}e^{-\frac{I^\prime}{2}}(II_*)^{\de/4-1/2}(I^\prime )^{\de/2-1}(I^\prime_* )^{\de/4-1/2}\bar{\phi}(I,I_*,I^\prime,I^\prime_*)dI^\prime dI^\prime_*\right)\notag\\
	&\qquad\qquad\qquad\times \frac{1}{|u|w(v_*,I_*)}dv_*dI_*.
\end{align}
From the definition of $\bar{\phi}$ \eqref{Defbarphi}, we have
\begin{align}\label{estbarphi}
	&\int_{(\R_+)^2}e^{-\frac{I^\prime}{2}}(II_*)^{\de/4-1/2}(I^\prime )^{\de/2-1}(I^\prime_* )^{\de/4-1/2}\bar{\phi}(I,I_*,I^\prime,I^\prime_*)dI^\prime dI^\prime_*\notag\\
	&\leq C\int_{(\R_+)^2}e^{-\frac{I^\prime}{2}}(I^\prime )^{\de/2-1}\frac{\chi_{\{I^\prime_*> 1\}}}{(I^\prime_* )^{\de/4+1}}dI^\prime dI^\prime_*+C\int_{(\R_+)^2}e^{-\frac{I^\prime}{2}}(I^\prime )^{-3/4}(I^\prime_* )^{\de/4-5/4}\chi_{\{I^\prime_*\leq 1\}}dI^\prime dI^\prime_*\notag\\
	&\leq C,
\end{align}
by the fact that $\de/4-5/4>-1$.
Collecting \eqref{1estG21} and \eqref{estbarphi}, it holds that
\begin{align}\label{estG21}
	G_{21}&\leq C\int_{\R^3\times\R_+} \frac{1}{|v-v_*|(1+|v_*|+\sqrt{I_*})^\be}dv_*dI_*\notag\\
	&\leq C\int_{\R^3\times\R_+} \frac{1}{|v-v_*|(1+|v_*|)^{3+(\be-5)/2} (1+\sqrt{I_*})^{(\be-1)/2}}dv_*dI_*\notag\\
	&\leq C.
\end{align}
The last inequality above holds since $(\be-5)/2>0$ and $(\be-1)/2>1$ by $\be>5$.

For $G_{22}$, we use the third equality of \eqref{ProW}, then exchange $(v^\prime,I^\prime)$ and $(v^\prime_*,I^\prime_*)$ to get
\begin{align}\label{estG22}
	G_{22}&=\frac{C}{\sqrt{M}}\int_{(\R^3)^3\times (\R_+)^3} W(v,v_*,I,I_*|v^\prime,v^\prime_*,I^\prime,I^\prime_*)\frac{\sqrt{M^\prime M^\prime_*}}{(I^\prime I^\prime_*)^{\de/2-1}}\frac{1}{w(v^\prime_*,I^\prime_*)}dv_*dv^\prime dv^\prime_* dI_*dI^\prime dI^\prime_*\notag\\
	&=\frac{C}{\sqrt{M}}\int_{(\R^3)^3\times (\R_+)^3} W(v,v_*,I,I_*|v^\prime_*,v^\prime,I^\prime_*,I^\prime)\frac{\sqrt{M^\prime M^\prime_*}}{(I^\prime I^\prime_*)^{\de/2-1}}\frac{1}{w(v^\prime_*,I^\prime_*)}dv_*dv^\prime dv^\prime_* dI_*dI^\prime dI^\prime_*\notag\\
	&=\frac{C}{\sqrt{M}}\int_{(\R^3)^3\times (\R_+)^3} W(v,v_*,I,I_*|v^\prime,v^\prime_*,I^\prime,I^\prime_*)\frac{\sqrt{M^\prime M^\prime_*}}{(I^\prime I^\prime_*)^{\de/2-1}}\frac{1}{w(v^\prime,I^\prime)}dv_*dv^\prime dv^\prime_* dI_*dI^\prime dI^\prime_*\notag\\
	&=G_{21}\notag\\
	&\leq C.
\end{align}
Then it follows from \eqref{estwGa+1}, \eqref{estG21} and \eqref{estG22} that
\begin{align}\label{estG2}
	G_2&=\left|w(v,I)\Ga_+(f^n,f^n)(t,x,v,I)\right|\leq CT\|wf^n\|^2_{\infty}.
\end{align}
Collecting \eqref{estwfn1}, \eqref{estG1} and \eqref{estG2}, one gets
\begin{align}\label{estwfn}
	|(w f^{n+1})(t,x,v,I)|\leq \|w f_0\|_{\infty}+C_1T(\|wf^n\|_{\infty}+\|wf^n\|^2_{\infty}),
\end{align}
for some constant $C_1>0$. 

Let $T_1$ be defined by \eqref{T_1} and assume $T\leq T_1$ and $\|wf^n\|_{\infty}\leq 2\|w f_0\|_{\infty}$. We have
\begin{align}\label{boundedness}
	|(w f^{n+1})(t,x,v,I)|\leq \|w f_0\|_{\infty}+6C_1T(\|wf_0\|_{\infty}+\|w f_0\|^2_{\infty})\leq 2\|w f_0\|_{\infty}.
\end{align}
Then Lemma \ref{lebound} is proved by induction.
\end{proof}

\begin{lemma}\label{leseq}
	For $\be>5$, we have
	\begin{align}\label{LE3}
		\dis \sup_{0\leq t\leq T_1}\left\|(\sqrt{w} f^{n+2}-\sqrt{w} f^{n+1})(t)\right\|_{\infty}\leq \frac{1}{2}\sup_{0\leq t\leq T_1}\left\|(\sqrt{w} f^{n+1}-\sqrt{w} f^{n})(t)\right\|_{\infty},
	\end{align}
	where $T_1$ is defined in \eqref{T_1}.
\end{lemma}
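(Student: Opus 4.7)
The plan is to subtract the Duhamel representation \eqref{appro} at indices $n+1$ and $n$ and decompose $d^{n+1}:=f^{n+2}-f^{n+1}$ into three contributions: (i) the difference of exponential prefactors acting on $f_0$, (ii) the $K$-term difference, and (iii) the $\Ga_+$-term difference. Writing $d^n:=f^{n+1}-f^n$, the goal is to bound the $\sqrt{w}$-weighted $L^\infty$ norm of each contribution by $CT_1(1+\|wf_0\|_\infty)\|\sqrt{w}d^n\|_\infty$, so that the specific choice of $T_1$ in \eqref{T_1} yields the contraction factor $\tfrac12$ in \eqref{LE3}.

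The basic linear estimate underpinning everything is
\[
|g^{n+1}-g^n|(t,x,v,I)\le C\nu(v,I)\,\|\sqrt{w}\,d^n\|_\infty,
\]
obtained by extracting $\|\sqrt{w}d^n\|_\infty$ from $|d^n_*|$ and bounding the remaining integral exactly as in the proof of Lemma~\ref{lenu}; the extra factor $1/\sqrt{w(v_*,I_*)}$ only improves the $v_*,I_*$-integrability. Combined with $|e^{-a}-e^{-b}|\le|a-b|$ and $\sqrt{w}(v,I)|f_0|\le\|wf_0\|_\infty/\sqrt{w}(v,I)$, this handles contribution (i); the residual factor $\nu/\sqrt{w}$ is uniformly bounded because $2-\al\le 2<5/2<\be/2$.

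Contribution (ii) is split as $e^{-\int_s^t g^{n+1}d\tau}Kd^n+(e^{-\int_s^t g^{n+1}d\tau}-e^{-\int_s^t g^{n}d\tau})Kf^n$. The first piece is controlled by applying Lemma~\ref{lemmaK} with the exponent $\be/2$ in place of $\be$ (the lemma permits any real exponent), giving $|\sqrt{w}\,Kd^n|\le C\|\sqrt{w}d^n\|_\infty$. The second piece combines the pointwise estimate on $g^{n+1}-g^n$ with the uniform bound $\|wf^n\|_\infty\le 2\|wf_0\|_\infty$ from \eqref{LE2} and Lemma~\ref{lemmaK} applied to $|Kf^n|$, once more using that $\nu/\sqrt{w}$ is bounded. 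Contribution (iii) decomposes as $\Ga_+(d^n,f^{n+1})+\Ga_+(f^n,d^n)$ plus an exponential-difference term involving $\Ga_+(f^n,f^n)$; the latter is handled as in (i) using the pointwise bound $|w\Ga_+(f^n,f^n)|\le C\|wf^n\|_\infty^2$ from \eqref{estG2}. For the two bilinear pieces I reproduce the argument of \eqref{estG21}--\eqref{estG22}, replacing \eqref{estw} by its square-root analogue $\sqrt{w}(v,I)\le C(\sqrt{w}(v',I')+\sqrt{w}(v'_*,I'_*))$ obtained from energy conservation with exponent $\be/2$, and assigning the square-root weight $|(d^n)'_*|\le\|\sqrt{w}d^n\|_\infty/\sqrt{w(v'_*,I'_*)}$ to the difference factor and the full weight $|(f^{n+1})'|\le C\|wf_0\|_\infty/w(v',I')$ to the bounded factor. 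The cross-term $1/(\sqrt{w(v',I')w(v'_*,I'_*)})$ that appears after this splitting is dominated by $\tfrac12(1/w(v',I')+1/w(v'_*,I'_*))$, reducing the remaining integrals to the already-controlled quantities $G_{21}$ and $G_{22}$ of \eqref{estG21}--\eqref{estG22}.

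The main obstacle is the weight bookkeeping: throughout, one has to arrange each pointwise product so that the single $\|\sqrt{w}d^n\|_\infty$ factor carries only the square-root weight while the accompanying factor $f^n$ or $f^{n+1}$ is absorbed in the full weight through \eqref{LE2}, and so that after applying the energy-conservation splitting no integral with non-integrable tail in $(v_*,I_*)$ is left over. Once the three contributions are summed, the $s$-integration produces the factor $T$, and taking $T=T_1$ as in \eqref{T_1} with $C_1$ chosen sufficiently large closes the contraction bound \eqref{LE3}.
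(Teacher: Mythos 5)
Your proposal is correct and follows essentially the same route as the paper's proof: the same Duhamel decomposition into exponential-difference, $K$-difference and $\Ga_+$-difference contributions, the bound $|e^{-a}-e^{-b}|\le |a-b|$ combined with $|g^{n+1}-g^n|\le C\nu\,\|\sqrt{w}(f^{n+1}-f^{n})\|_\infty$, Lemma \ref{lemmaK} with exponent $\be/2$, the square-root analogue of \eqref{estw}, and the reduction of the bilinear pieces to the quantities $G_{21}$, $G_{22}$ of \eqref{estG21}--\eqref{estG22}. The only (harmless) deviation is your treatment of the cross-weight $\tfrac{1}{\sqrt{w(v',I')\,w(v'_*,I'_*)}}$ by the AM--GM bound $\tfrac12\big(\tfrac{1}{w(v',I')}+\tfrac{1}{w(v'_*,I'_*)}\big)$, which reduces it to $G_{21}+G_{22}$, whereas the paper instead uses $\sqrt{w(v',I')\,w(v'_*,I'_*)}\ge C\sqrt{w(v,I)}$ together with the arguments of \eqref{estGa+2}--\eqref{estGa+3}; both close the estimate.
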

\begin{proof}
	For $(t,x,v)\in[0,T]\times \T^3 \times \R^3\times \R_+$, we take the difference between $\sqrt{w}f^{n+2}$ and $\sqrt{w}f^{n+1}$ to get
	\begin{align}\label{1diff}
		\dis &|(\sqrt{w}f^{n+2}-\sqrt{w}f^{n+1})(t,x,v,I)|\notag\\
		&\leq  |\sqrt{w(v,I)}f_0(x-vt,v,I)|\left| e^{-{\int^t_0g^{n+1}(\tau,x-v(t-\tau),v,I)d\tau}}-e^{-{\int^t_0g^n(\tau,x-v(t-\tau),v,I)d\tau}}  \right|\notag\\
		&\quad +\int_0^t \left|\sqrt{w(v,I)}\left(Kf^{n+1}\right)(s,x-v(t-s),v,I)\right|\notag \\
		& \qquad\qquad \times\left|e^{-{\int^t_sg^{n+1}(\tau,x-v(t-\tau),v,I)d\tau}}-e^{-{\int^t_sg^n(\tau,x-v(t-\tau),v,I)d\tau}}  \right| ds\notag\\
		&\quad +\int_0^t \left|\sqrt{w(v,I)}\Ga_+(f^{n+1},f^{n+1})(s,x-v(t-s),v,I)\right|\notag \\
		& \qquad\qquad \times\left|e^{-{\int^t_sg^{n+1}(\tau,x-v(t-\tau),v,I)d\tau}}-e^{-{\int^t_sg^n(\tau,x-v(t-\tau),v,I)d\tau}}\right|ds\notag \\
		&\quad +\int_0^t e^{-{\int^t_0g^n(\tau,x-v(t-\tau),v,I)d\tau}} \sqrt{w(v,I)}\left|(Kf^{n+1}-Kf^{n})(s,x-v(t-s),v,I)\right| ds \notag\\
		&\quad +\int_0^t e^{-{\int^t_0g^n(\tau,x-v(t-\tau),v,I)d\tau}} \sqrt{w(v,I)}\left|[\Ga_+(f^{n+1},f^{n+1})-\Ga_+(f^{n},f^{n})](s,x-v(t-s),v,I)\right| ds.
	\end{align}
By the fact that $|e^{-a}-e^{-b}|\leq|a-b|$ for any $a,b\geq0$, we have
\begin{align}\label{diffg}
&\left|e^{-{\int^t_sg^{n+1}(\tau,x-v(t-\tau),v,I)d\tau}}-e^{-{\int^t_sg^n(\tau,x-v(t-\tau),v,I)d\tau}}\right|\notag\\
&\leq \int^t_s\left| (g^{n+1}-g^n)(\tau,x-v(t-\tau),v) \right|d\tau\notag\\
&\leq T\sup_{0\leq t\leq T}\left\|(\sqrt{w} f^{n+1}-\sqrt{w} f^{n})(t)\right\|_{\infty}\notag\\
&\qquad\qquad\times\int_{(\R^3)^3\times (\R_+)^3} W(v,v_*,I,I_*|v^\prime,v^\prime_*,I^\prime,I^\prime_*)\frac{\sqrt{M_*}}{(I I_*)^{\de/2-1}}dv_*dv^\prime dv^\prime_* dI_*dI^\prime dI^\prime_*\notag\\
&\leq CT\nu(v,I)\sup_{0\leq t\leq T}\left\|(\sqrt{w} f^{n+1}-\sqrt{w} f^{n})(t)\right\|_{\infty}.
\end{align}
The last inequality above holds by \eqref{estGa-2}. Since $\sqrt{w(v,I)}\nu(v,I)\leq Cw(v,I)$ for $\be>5$, then from \eqref{1diff} and \eqref{diffg},
 \begin{align*}
 	\dis &|(\sqrt{w}f^{n+2}-\sqrt{w}f^{n+1})(t,x,v,I)|\notag\\
 	&\leq  CT\|w f_0\|_{\infty}\sup_{0\leq t\leq T}\left\|(\sqrt{w} f^{n+1}-\sqrt{w} f^{n})(t)\right\|_{\infty}\notag\\
 	&\quad +CT\sup_{0\leq t\leq T}\left\|(\sqrt{w} f^{n+1}-\sqrt{w} f^{n})(t)\right\|_{\infty}\int_0^t \left|w(v,I)\left(Kf^{n+1}\right)(s,x-v(t-s),v,I)\right|ds\notag\\
 	&\quad +CT\sup_{0\leq t\leq T}\left\|(\sqrt{w} f^{n+1}-\sqrt{w} f^{n})(t)\right\|_{\infty}\int_0^t \left|w(v,I)\Ga_+(f^{n+1},f^{n+1})(s,x-v(t-s),v,I)\right|ds\notag \\
 	&\quad +\int_0^t  \sqrt{w(v,I)}\left|(Kf^{n+1}-Kf^{n})(s,x-v(t-s),v,I)\right| ds \notag\\
 	&\quad +\int_0^t  \sqrt{w(v,I)}\left|[\Ga_+(f^{n+1},f^{n+1})-\Ga_+(f^{n},f^{n})](s,x-v(t-s),v,I)\right| ds.
 \end{align*}
Using \eqref{estG1}, \eqref{estG2}, \eqref{estwfn} and similar arguments in \eqref{estG1}, we have
 \begin{align}\label{diff}
	\dis &|(\sqrt{w}f^{n+2}-\sqrt{w}f^{n+1})(t,x,v,I)|\notag\\
	&\leq  CT(1+\|w f_0\|_{\infty})\sup_{0\leq t\leq T}\left\|(\sqrt{w} f^{n+1}-\sqrt{w} f^{n})(t)\right\|_{\infty}\notag\\
	&\quad +\int_0^t  \sqrt{w(v,I)}\left|[\Ga_+(f^{n+1},f^{n+1})-\Ga_+(f^{n},f^{n})](s,x-v(t-s),v,I)\right| ds\notag\\
	&=CT(1+\|w f_0\|_{\infty})\sup_{0\leq t\leq T}\left\|(\sqrt{w} f^{n+1}-\sqrt{w} f^{n})(t)\right\|_{\infty}+G_3.
\end{align}
We split $G_3$ as follows:
\begin{align}\label{G3}
	G_3&=\int_0^t  \sqrt{w(v,I)}\left|[\Ga_+(f^{n+1},f^{n+1})-\Ga_+(f^{n},f^{n})](s,x-v(t-s),v,I)\right| ds\notag\\
	&\leq \int_0^t  \sqrt{w(v,I)}\left|\Ga_+(f^{n+1}-f^n,f^{n+1})(s,x-v(t-s),v,I)\right| ds\notag\\
	&\qquad\qquad\qquad+\int_0^t \sqrt{w(v,I)}\left|\Ga_+(f^{n},f^{n+1}-f^{n})(s,x-v(t-s),v,I)\right| ds\notag\\
	&=G_{31}+G_{32}.
\end{align}
Consider $G_{31}$ first. By \eqref{estw}, we obtain
\begin{align}\label{G311}
	G_{31}&=\int_0^t  \sqrt{w(v,I)}\left|\Ga_+(f^{n+1}-f^n,f^{n+1})(s,x-v(t-s),v,I)\right| ds\notag\\
	&\leq \frac{C}{\sqrt{M}}\int_0^t\int_{(\R^3)^3\times (\R_+)^3} W(v,v_*,I,I_*|v^\prime,v^\prime_*,I^\prime,I^\prime_*)\frac{\sqrt{M^\prime M^\prime_*}}{(I^\prime I^\prime_*)^{\de/2-1}}\frac{1}{w(v^\prime_*,I^\prime_*)}\notag\\
	&\qquad\qquad\qquad\times(\sqrt{w}f^{n+1}-\sqrt{w}f^n)^\prime (wf^{n+1})^\prime_*dv_*dv^\prime dv^\prime_* dI_*dI^\prime dI^\prime_*ds\notag\\
	&\quad+\frac{C}{\sqrt{M}}\int_0^t\int_{(\R^3)^3\times (\R_+)^3} W(v,v_*,I,I_*|v^\prime,v^\prime_*,I^\prime,I^\prime_*)\frac{\sqrt{M^\prime M^\prime_*}}{(I^\prime I^\prime_*)^{\de/2-1}}\frac{1}{\sqrt{w(v^\prime_*,I^\prime_*)}\sqrt{w(v^\prime,I^\prime)}}\notag\\
	&\qquad\qquad\qquad\times(\sqrt{w}f^{n+1}-\sqrt{w}f^n)^\prime (wf^{n+1})^\prime_*dv_*dv^\prime dv^\prime_* dI_*dI^\prime dI^\prime_*ds.
\end{align}
Using \eqref{estG22} for the first term on the right hand side of \eqref{G311} and the inequality $$\sqrt{w(v^\prime_*,I^\prime_*)}\sqrt{w(v^\prime,I^\prime)}\geq C\sqrt{w(v,I)},$$ similar arguments in \eqref{estGa+2} and \eqref{estGa+3} for the second term, one has
\begin{align}\label{G31}
	G_{31}&\leq CT\sup_{0\leq t\leq T}\left\|wf^{n+1}(t)\right\|_{\infty}\sup_{0\leq t\leq T}\left\|(\sqrt{w} f^{n+1}-\sqrt{w} f^{n})(t)\right\|_{\infty}\left(1+\frac{\nu(v,I)}{\sqrt{w(v,I)}}\right)\notag\\
	&\leq CT\left\|wf_0\right\|_{\infty}\sup_{0\leq t\leq T}\left\|(\sqrt{w} f^{n+1}-\sqrt{w} f^{n})(t)\right\|_{\infty}.
\end{align}
The last equality above holds by \eqref{LE2}.
Similarly we have
\begin{align}\label{G32}
	G_{32}&\leq CT\left\|wf_0\right\|_{\infty}\sup_{0\leq t\leq T}\left\|(\sqrt{w} f^{n+1}-\sqrt{w} f^{n})(t)\right\|_{\infty}.
\end{align}
Combining \eqref{diff}, \eqref{G3}, \eqref{G31} and \eqref{G32} and letting $T=T_1$, we conclude that
\begin{align*}
	\dis &|(\sqrt{w}f^{n+2}-\sqrt{w}f^{n+1})(t,x,v,I)|\notag\\
	&\leq  C_2T_1(1+\|w f_0\|_{\infty})\sup_{0\leq t\leq T}\left\|(\sqrt{w} f^{n+1}-\sqrt{w} f^{n})(t)\right\|_{\infty}\notag\\
	&\leq \frac{C_2}{8C_1}\sup_{0\leq t\leq T}\left\|(\sqrt{w} f^{n+1}-\sqrt{w} f^{n})(t)\right\|_{\infty}.
\end{align*}
Notice in \eqref{estwfn}, we can choose the constant $C_1$ to be sufficiently large. We then choose $C_1$ such that $\frac{C_2}{8C_1}\leq \frac{1}{2}$ to get the desired estimate \eqref{LE3}.
\end{proof}

Since $\{\sqrt{w}f^n\}$ is a Cauchy sequence, we can take the limit to obtain a local-in-time mild solution of \eqref{BE}. As a consequence \eqref{LE} follows from \eqref{boundedness}. The uniqueness can be obtained in a similar way to how we prove Lemma \ref{leseq} and the details are omitted fore brevity. This then completes the proof of Theorem \ref{local}. 

From the next section, we extend the local solution to a global solution by the continuity argument.

\section{Linear $L^2$ decay}
In this section, we consider solving the linear problem for $f=f(t,x,v,I)$ in $L^2$ framework
\begin{align}\label{LBE}
	\pa_tf+v\cdot \na_x f+L f=0,
\end{align}
with the initial data $f(0,x,v,I)=f_0(x,v,I)$ satisfying \eqref{M}, \eqref{J} and \eqref{E}. 
By the properties of $W(v,v_*,I,I_*|v^\prime,v^\prime_*,I^\prime,I^\prime_*)$, we have the following proposition. Detailed proof can be found in Proposition 3 in \cite{Bernhoff}.
\begin{proposition}\label{propoker}
	The  linear operator $L$ is self-adjoint and nonnegative on $L^2(\R^3_v\times\R^+_I)$, and an orthogonal basis of $\ker L$ is given by
	\begin{align*}
		\left\{\sqrt{M},v\sqrt{M},(|v|^2+2I-3-\de)\sqrt{M}\right\}.
	\end{align*}
\end{proposition}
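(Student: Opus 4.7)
The plan is to absorb the internal-energy weight $I^{\de/2-1}$ into a change of variables so that the polyatomic linearized operator takes a classical monatomic-type form, and then apply the standard pre/post-collisional symmetrization. Set $\tilde{f}(v,I):=I^{-(\de/2-1)/2}f(v,I)$; this is a unitary map from $L^2(\R^3\times\R_+;dvdI)$ onto $L^2(\R^3\times\R_+;dv\,d\mu(I))$ with $d\mu(I):=I^{\de/2-1}dI$, because $\int|f|^2dvdI=\int|\tilde{f}|^2I^{\de/2-1}dvdI$. Correspondingly set $\tilde{F}:=F/I^{\de/2-1}$ and $\tilde{M}:=M/I^{\de/2-1}=\frac{1}{(2\pi)^{3/2}\Ga(\de/2)}e^{-|v|^2/2-I}$, which is normalized with respect to $dv\,d\mu$ and for which $F=M+\sqrt{M}f$ is equivalent to $\tilde{F}=\tilde{M}+\sqrt{\tilde{M}}\tilde{f}$. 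Writing $W=(II_*I^\prime I^\prime_*)^{\de/2-1}\widetilde{W}$, with $\widetilde{W}$ still satisfying the three symmetries in \eqref{ProW}, a direct substitution shows
\begin{equation*}
\frac{Q(F,G)}{I^{\de/2-1}} = \int\widetilde{W}\bigl[\tilde{F}^\prime\tilde{G}^\prime_* - \tilde{F}\tilde{G}_*\bigr]\,dv_*\,d\mu_*\,dv^\prime\,d\mu^\prime\,dv^\prime_*\,d\mu^\prime_*,
\end{equation*}
which is precisely the monatomic collision form with respect to the weighted measure $dv\,d\mu$.

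Consequently $\tilde{L}\tilde{f}:=Lf/I^{(\de/2-1)/2}$ has the classical structure $\tilde{L}\tilde{f}=\sqrt{\tilde{M}}\int\widetilde{W}\tilde{M}_*(\tilde\phi+\tilde\phi_*-\tilde\phi^\prime-\tilde\phi^\prime_*)$ with $\tilde\phi:=\tilde{f}/\sqrt{\tilde{M}}$. Using the three symmetries in \eqref{ProW} of $\widetilde{W}$ together with $\tilde{M}\tilde{M}_*=\tilde{M}^\prime\tilde{M}^\prime_*$ on the support of the energy-conservation Dirac delta, the standard pre/post-collisional averaging yields
\begin{equation*}
\langle\tilde{L}\tilde{f},\tilde{g}\rangle_{L^2(dv\,d\mu)}=\frac{1}{4}\int\widetilde{W}\,\tilde{M}\tilde{M}_*(\tilde\phi+\tilde\phi_*-\tilde\phi^\prime-\tilde\phi^\prime_*)(\tilde\psi+\tilde\psi_*-\tilde\psi^\prime-\tilde\psi^\prime_*),
\end{equation*}
which is manifestly symmetric in $(\tilde{f},\tilde{g})$ and nonnegative when $\tilde{f}=\tilde{g}$. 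Since the isometry $f\mapsto\tilde{f}$ identifies the inner products via $\langle Lf,g\rangle_{L^2(dvdI)}=\langle\tilde{L}\tilde{f},\tilde{g}\rangle_{L^2(dv\,d\mu)}$, I conclude that $L$ is self-adjoint and nonnegative on $L^2(\R^3\times\R_+;dvdI)$.

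For the kernel, $\langle\tilde{L}\tilde{f},\tilde{f}\rangle=0$ forces $\tilde\phi+\tilde\phi_*=\tilde\phi^\prime+\tilde\phi^\prime_*$ for $\widetilde{W}$-a.e.\ collisional configuration, i.e.\ $\tilde\phi$ is a collision invariant of the Borgnakke--Larsen process conserving $v$ and $|v|^2/2+I$. The span of these invariants is the five-dimensional space $\mathrm{span}\{1,v_1,v_2,v_3,|v|^2+2I\}$, which is the classical Borgnakke--Larsen identification (cf.\ \cite{BDTP,Desvillettes,Bernhoff}). Hence $\ker\tilde{L}=\sqrt{\tilde{M}}\cdot\mathrm{span}\{1,v,|v|^2+2I\}$, and using $I^{(\de/2-1)/2}\sqrt{\tilde{M}}=\sqrt{M}$ one obtains $\ker L=\sqrt{M}\cdot\mathrm{span}\{1,v,|v|^2+2I\}$.

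Finally, to orthogonalize in $L^2(dvdI)$: parity in $v$ gives that $\sqrt{M}$, $v_i\sqrt{M}$ ($i=1,2,3$) and $(|v|^2+2I)\sqrt{M}$ are mutually orthogonal except possibly for the pair $\{\sqrt{M},(|v|^2+2I)\sqrt{M}\}$. A direct moment computation using $\int|v|^2M\,dvdI=3$ and $\int 2I\cdot M\,dvdI=2\Ga(\de/2+1)/\Ga(\de/2)=\de$ gives $\int(|v|^2+2I)M\,dvdI=3+\de$, so subtracting this mean produces the orthogonal basis element $(|v|^2+2I-3-\de)\sqrt{M}$, completing the claim. The only genuinely nontrivial input is the Borgnakke--Larsen invariant classification, for which I would appeal to the cited literature rather than reprove it here.
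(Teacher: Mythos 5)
The paper offers no proof of this proposition at all: it states that the result follows ``by the properties of $W$'' and refers the reader to Proposition 3 of \cite{Bernhoff}. Your argument is therefore more self-contained than the paper's, and it is correct; it is also essentially the standard proof that the cited reference carries out. Your one organizational novelty is the unitary conjugation $f\mapsto I^{-(\de/2-1)/2}f$ onto $L^2(dv\,I^{\de/2-1}dI)$, which cleanly isolates why the Gibbs factor satisfies $\tilde M\tilde M_*=\tilde M'\tilde M'_*$ on the support of the energy delta (the $(II_*)^{\de/2-1}$ prefactors of $M$ do not match across the collision, but they are exactly cancelled by the weights built into \eqref{DefQ}); Bernhoff works directly with $W$ and the weighted products, but the content is identical. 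Your weak form, the $\tfrac14$-symmetrization using \eqref{ProW}, and the moment computation $\int(|v|^2+2I)M\,dv\,dI=3+\de$ (via $\int 2I\,M\,dv\,dI=2\Ga(\de/2+1)/\Ga(\de/2)=\de$) are all correct, and the parity argument does reduce orthogonalization to the single pair $\{\sqrt M,(|v|^2+2I)\sqrt M\}$.

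Two caveats, neither fatal. First, the only genuinely nontrivial step --- that the Borgnakke--Larsen collision invariants are \emph{exactly} $\mathrm{span}\{1,v,|v|^2+2I\}$ and nothing more --- is deferred to the literature; since the paper defers the entire proposition to the same literature, this is acceptable here, but in a fully self-contained write-up this classification (the analogue of the Gr\"onwall/Cercignani argument, adapted to the extra energy variable $I$) is where all the work lives, and one must check that the support of $\widetilde W$ is rich enough (e.g.\ $\si>0$ on the admissible set) for the functional equation $\tilde\phi+\tilde\phi_*=\tilde\phi'+\tilde\phi'_*$ to force this conclusion. Second, what your computation establishes is symmetry and nonnegativity of the quadratic form on a dense domain; ``self-adjoint'' for the unbounded operator $L=\nu-K$ then follows in the usual way because $\nu$ is a real multiplication operator and $K$ is bounded and symmetric, a tacit step that the paper also does not spell out.
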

 The orthogonal projection operator to $\ker L$ is given by
\begin{align*}
	\FP_1f=\left\{a+b\cdot v+c(|v|^2+2I-3-\de)\right\}\sqrt{M},
\end{align*}
where $a=a(t,x)$, $b=b(t,x)=(b_1(t,x),b_2(t,x),b_3(t,x))$ and $c=c(t,x)$.
We set $\FP_2f=f-\FP_1f$.

By the pointwise estimates \eqref{pointk1} and \eqref{pointk2}, one can prove the compactness of $K$ using Lemma 3.5.1 in \cite{Glassey} or the contradiction argument as the Lemma 3 in \cite{Guo03}, details can be found in Section 4 in \cite{Bernhoff}. Moreover, together with Proposition \ref{propoker} and Lemma \ref{lenu}, we have the following proposition.

\begin{proposition}
	There exists a constant $\la_0>0$ such that for given function $f=f(v,I)$, we have
	\begin{align}\label{proco}
		\int_{\R^3\times\R_+}f(v,I)Lf(v,I)dvdI\geq C\la_0 \int_{\R^3\times\R_+} \nu(v,I)|\FP_2f(v,I)|^2dvdI\geq \la_0\|\FP_2f\|_{L^2_{v,I}}^2 
	\end{align}	
\end{proposition}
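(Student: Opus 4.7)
The plan is to upgrade the basic (non-weighted) coercivity estimate of Bernhoff \cite{Bernhoff}, namely
\begin{align*}
\int_{\R^3\times\R_+} f\, Lf\, dvdI \geq c_0 \|\FP_2 f\|_{L^2_{v,I}}^2,
\end{align*}
to the $\nu$-weighted version claimed in \eqref{proco}, by combining the splitting $L=\nu-K$ with the kernel decay of Lemma \ref{lemmaK}. Since $L$ is self-adjoint with $\ker L = \mathrm{Range}(\FP_1)$ by Proposition \ref{propoker}, we have $\int f\, Lf = \int \FP_2 f\, L(\FP_2 f)$, so it suffices to establish the $\nu$-weighted estimate for $g := \FP_2 f$, which by construction satisfies $\FP_1 g = 0$.

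For such $g$ I would write
\begin{align*}
\int g\, Lg\, dvdI = \int \nu(v,I) |g(v,I)|^2\, dvdI - \int g(v,I)\, (Kg)(v,I)\, dvdI,
\end{align*}
and reduce the proof to the following intermediate claim: for every $\eta > 0$ there exist $R_\eta,\, C_\eta > 0$ such that
\begin{align*}
\left|\int g\, Kg\, dvdI\right| \leq \eta \int \nu |g|^2\, dvdI + C_\eta \int_{B_{R_\eta}} |g|^2\, dvdI,
\end{align*}
where $B_R := \{(v,I) : |v| + \sqrt{I} \leq R\}$. To prove this claim I would split the kernel $k = k_2 - k_1$ according to whether $(v,I)$ and/or $(v_*,I_*)$ lie in $B_R$, so that at most one piece is supported on the compact set $B_R \times B_R$. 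On the three pieces where at least one argument lies in $B_R^c$, I would apply a double Cauchy-Schwarz, using the Gaussian decay in $|v|,|v_*|,I,I_*$ from \eqref{pointk1} and the $\phi$-weighted decay of $k_2$ from \eqref{pointk2} (combined with an $e^{\epsilon|v-v_*|^2}(1+I_*)^m$ cushion as in Lemma \ref{lemmaK}) to bound the contribution by $\epsilon(R) \int \nu |g|^2$ with $\epsilon(R) \to 0$ as $R \to \infty$. On the remaining compact piece, the kernel is bounded by a constant $C_R$ (from the same pointwise bounds restricted to $B_R\times B_R$), which produces the term $C_\eta \|g\|_{L^2(B_R)}^2$.

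Once the claim is in hand, combining it with Bernhoff's basic coercivity yields
\begin{align*}
\int g\, Lg \geq (1-\eta)\int \nu |g|^2 - C_\eta \|g\|_{L^2_{v,I}}^2 \geq (1-\eta)\int \nu|g|^2 - \frac{C_\eta}{c_0}\int g\, Lg,
\end{align*}
so choosing $\eta$ small enough gives the first inequality in \eqref{proco} with $\lambda_0$ depending on $c_0, C_\eta,\eta$. The second inequality is immediate from the uniform lower bound $\nu \geq \nu_0 > 0$ of Lemma \ref{lenu}. The main obstacle is the intermediate Cauchy-Schwarz splitting: in the polyatomic setting the kernel depends on the unbounded internal energies $I, I_*$, so the classical monatomic-style argument of cutting only in the velocity variable is insufficient. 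The extra decay in $I$ and $I_*$ carefully built into Lemma \ref{lemmaK} is precisely what allows the tail contribution to be absorbed into a small multiple of $\int \nu |g|^2$.
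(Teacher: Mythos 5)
Your argument is correct and follows essentially the route the paper intends: the paper itself gives no written proof, merely citing the compactness of $K$ (via the pointwise bounds \eqref{pointk1}--\eqref{pointk2} and \cite{Bernhoff}) together with Proposition \ref{propoker} and Lemma \ref{lenu}, and your cut-off/Cauchy--Schwarz splitting of $\int g\,Kg$ plus the convex combination with Bernhoff's unweighted coercivity is the standard concrete implementation of exactly that citation. The only detail worth recording is that the double Cauchy--Schwarz step also uses the bound on $\int |k|\,dv\,dI$ over the \emph{first} argument, which follows from the symmetry of $k$ in $(v,I)\leftrightarrow(v_*,I_*)$ guaranteed by the self-adjointness of $L$.
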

Then it is direct to obtain
\begin{align}\label{co}
	\frac{1}{2}\pa_t\|\hat{f}(t)\|^2_{L^2_{k,v,I}}+\la_0\|\FP_2\hat{f}(t)\|_{L^2_{k,v,I}}^2\leq0, 
\end{align}
from \eqref{LBE} and \eqref{proco}.

In order to deduce the $L^2$ estimate, we need to take care of terms including $\|Kf\|$, which needs the following lemma.

\begin{lemma}\label{L2K}
	There exists a constant C such that
	\begin{align*}
		\|Kf(t,x)\|_{L^2_{v,I}}\leq C\|f(t,x)\|_{L^2_{v,I}}.
	\end{align*}
\end{lemma}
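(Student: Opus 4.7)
The plan is to prove this as a Schur-type bound, using the integral representation and kernel bound from Lemma \ref{lemmaK}. Writing $K = K_2 - K_1$ with kernel $k = k_2 - k_1$, so that $Kf(t,x,v,I) = \int_{\R^3 \times \R_+} k(v,v_*,I,I_*) f(t,x,v_*,I_*)\, dv_* dI_*$, I would apply Cauchy--Schwarz with the splitting $|k| = |k|^{1/2}\cdot |k|^{1/2}$:
\begin{align*}
|Kf(t,x,v,I)|^2 \leq \left(\int |k(v,v_*,I,I_*)|\, dv_* dI_*\right)\left(\int |k(v,v_*,I,I_*)|\, |f(t,x,v_*,I_*)|^2\, dv_* dI_*\right).
\end{align*}

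For the first factor I would invoke Lemma \ref{lemmaK} with the trivial weight choice $\beta = 0$, $\epsilon = 0$, $m = 0$ (all within the admissible ranges), which gives $\int |k(v,v_*,I,I_*)|\, dv_* dI_* \leq C/(1+|v|+I^{1/4}) \leq C$. Integrating the resulting inequality in $(v,I)$ and swapping the order of integration by Fubini, one obtains
\begin{align*}
\|Kf(t,x)\|_{L^2_{v,I}}^2 \leq C \int_{\R^3 \times \R_+} \left(\int_{\R^3 \times \R_+} |k(v,v_*,I,I_*)|\, dv\, dI\right) |f(t,x,v_*,I_*)|^2\, dv_* dI_*.
\end{align*}

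It then remains to bound the inner integral uniformly in $(v_*,I_*)$. For this, the natural route is to invoke the symmetry $k(v,v_*,I,I_*) = k(v_*,v,I_*,I)$, which follows from the self-adjointness of $L$ in Proposition \ref{propoker} together with $K = \nu - L$ and the self-adjointness of multiplication by $\nu$. (Alternatively, this symmetry can be read off directly from the representations of $k_1$ and $k_2$ via the first identity in \eqref{ProW} and a renaming of the primed variables.) Given this symmetry, a second application of Lemma \ref{lemmaK} with the roles of $(v,I)$ and $(v_*,I_*)$ exchanged yields $\int |k(v,v_*,I,I_*)|\, dv\, dI \leq C/(1+|v_*|+I_*^{1/4}) \leq C$, and plugging back produces the desired bound $\|Kf(t,x)\|_{L^2_{v,I}} \leq C\|f(t,x)\|_{L^2_{v,I}}$.

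The only subtle step is the kernel symmetry; since Proposition \ref{propoker} is already in hand, this step is essentially free, and the main analytic difficulty has already been absorbed by Lemma \ref{lemmaK}. In short, I do not expect a genuine obstacle here: the lemma is a routine Schur bound bootstrapped from the sharper weighted kernel estimate \eqref{estk}.
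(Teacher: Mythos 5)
Your proposal is correct and follows essentially the same route as the paper: a Schur-type bound via Cauchy--Schwarz with the splitting $|k|=|k|^{1/2}\cdot|k|^{1/2}$, combined with the kernel estimate of Lemma \ref{lemmaK}. In fact you are slightly more careful than the paper, which silently uses the bound $\int k(v,v_*,I,I_*)\,dv\,dI\leq C$ in the last step without spelling out the kernel symmetry that justifies it.
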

\begin{proof}
	By Lemma \ref{lemmaK} and H\"older's inequality, one has
	\begin{align*}
		\int_{\R^3\times\R_+}|Kf(t,x,v,I)|^2dvdI&=\int_{\R^3\times\R_+}\left|\int_{\R^3\times\R_+}k(v,v_*,I,I_*)f(t,x,v_*,I_*)dv_*dI_*\right|^2dvdI\notag\\
		&\leq C\int_{\R^3\times\R_+}\left(\int_{\R^3\times\R_+}k(v,v_*,I,I_*)dv_*dI_*\right)\notag\\
		&\qquad\qquad\qquad\times\left(\int_{\R^3\times\R_+}k(v,v_*,I,I_*)\left|f(t,x,v_*,I_*)\right|^2dv_*dI_*\right)dvdI\notag\\
		&\leq C\int_{\R^3\times\R_+\times \R^3\times\R_+}k(v,v_*,I,I_*)\left|f(t,x,v_*,I_*)\right|^2dv_*dI_*dvdI\notag\\
		&\leq C\int_{\R^3\times\R_+}\left|f(t,x,v_*,I_*)\right|^2dv_*dI_*\notag\\
		&=C\|f(t,x)\|_{L^2_{v,I}}^2.
	\end{align*}
\end{proof}

With the above preparation, we will prove following the $L^2$ decay lemma.
\begin{lemma}\label{leL2}
	Let $f(t,x,v,I)$ be any solution to the linearized Boltzmann equation \eqref{LBE} with $(M_0, J_0, E_0) = (0,0,0)$. Then there exists positive constants $\la$ and $C$ such that
	\begin{align}\label{L2decay}
		\|f(t)\|\leq Ce^{-\la t}\|f_0\|,
	\end{align}
for $t\geq 0$.
\end{lemma}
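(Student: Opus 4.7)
The plan is to work in Fourier series on the torus and, for each mode $k\in\Z^3$, construct a Lyapunov functional equivalent to $\|\hat f(t,k)\|_{L^2_{v,I}}^2$ by combining the coercivity of $L$ on the microscopic part $\FP_2\hat f$ with a Kawashima-type dissipation on the macroscopic part $(\hat a,\hat b,\hat c)(t,k)$, and then to use the conservation laws to handle the zero mode.

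First, taking the Fourier transform of \eqref{LBE} in $x$ gives
\begin{equation*}
\pa_t\hat f(t,k)+\rmi v\cdot k\,\hat f(t,k)+L\hat f(t,k)=0,\qquad k\in\Z^3.
\end{equation*}
Pairing with $\hat f$ in $L^2_{v,I}$ and invoking \eqref{proco} reproduces \eqref{co} at each frequency. This controls $\FP_2\hat f$ but leaves $\FP_1\hat f=\{\hat a+\hat b\cdot v+\hat c(|v|^2+2I-3-\de)\}\sqrt M$ undamped, so the next step is to extract dissipation for $(\hat a,\hat b,\hat c)$.

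To that end, I would test the Fourier-transformed equation against $\sqrt M$, $v_j\sqrt M$ and $(|v|^2+2I-3-\de)\sqrt M$ and use $L\FP_1=0$ to obtain a closed first-order system for $(\hat a,\hat b,\hat c)$ in which $\rmi k$ and certain moments of $\FP_2\hat f$ are the only couplings; this system has the Kawashima-type structure of a symmetric hyperbolic system with partial dissipation. Following \cite{Kawashima}, I would introduce auxiliary test functions (degree-three polynomials in $v$ coupled with the appropriate $I$-moments, multiplied by $\sqrt M$) whose inner products with $\hat f$ define compensators; the transport term $\rmi v\cdot k$ then produces cross terms whose real parts reconstruct the macroscopic modulus squared, while $\langle L\hat f,\cdot\rangle$ is controlled via Lemma \ref{L2K}. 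After a suitable weighted sum one obtains an interactive functional $\CE_{\rm int}(t,k)$, satisfying $|\CE_{\rm int}(t,k)|\le C\|\hat f(t,k)\|_{L^2_{v,I}}^2$ and
\begin{equation*}
\pa_t\CE_{\rm int}(t,k)+\frac{|k|^2}{1+|k|^2}\bigl(|\hat a|^2+|\hat b|^2+|\hat c|^2\bigr)(t,k)\le C\|\FP_2\hat f(t,k)\|_{L^2_{v,I}}^2.
\end{equation*}

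Adding a small multiple $\ka>0$ of this inequality to \eqref{co} yields a Lyapunov functional $\CE(t,k)=\|\hat f(t,k)\|_{L^2_{v,I}}^2+\ka\CE_{\rm int}(t,k)\sim\|\hat f(t,k)\|_{L^2_{v,I}}^2$ with
\begin{equation*}
\pa_t\CE(t,k)+\la\,\frac{|k|^2}{1+|k|^2}\CE(t,k)\le 0.
\end{equation*}
For $k\ne 0$ on $\Z^3$ one has $|k|\ge 1$, so $|k|^2/(1+|k|^2)\ge 1/2$ and exponential decay follows uniformly. For $k=0$, the conservation laws \eqref{M}, \eqref{J}, \eqref{E} with $(M_0,J_0,E_0)=0$, combined with the identities $\int v M\,dv\,dI=0$ and $\int(|v|^2+2I)M\,dv\,dI=3+\de$, force $\hat a(t,0)=\hat b(t,0)=\hat c(t,0)=0$, so $\hat f(t,0)=\FP_2\hat f(t,0)$ and \eqref{co} alone gives exponential decay. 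Plancherel then yields \eqref{L2decay}. The main obstacle will be engineering the Kawashima compensators in the presence of the internal-energy variable: the basis element $(|v|^2+2I-3-\de)\sqrt M$ mixes $v$ and $I$, so the auxiliary test functions must carry the right mixed moments against $M=\frac{I^{\de/2-1}}{(2\pi)^{3/2}\Ga(\de/2)}e^{-|v|^2/2-I}$, with all Gaussian--Gamma moments (via $\int_0^\infty I^{s-1}e^{-I}dI=\Ga(s)$) computed explicitly in order to make the coupling matrix between $(\hat a,\hat b,\hat c)$ and their transport moments invertible.
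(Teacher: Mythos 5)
Your proposal follows essentially the same route as the paper: Fourier transform in $x$, the coercivity estimate \eqref{proco} yielding \eqref{co} for the microscopic part, a Kawashima-type compensator construction from the moment (fluid-type) system for $(\hat a,\hat b,\hat c)$ producing an interactive functional $\CE^{int}$ with $\pa_t\rmre\,\CE^{int}+\la\frac{|k|^2}{1+|k|^2}(|\hat a|^2+|\hat b|^2+|\hat c|^2)\le C\|\FP_2\hat f\|^2_{L^2_{v,I}}$, a perturbed Lyapunov functional equivalent to $\|\hat f\|^2$, and the vanishing of the zero mode of $(a,b,c)$ from $(M_0,J_0,E_0)=0$; the paper's explicit compensators are exactly the mixed moments $(v_iv_j-1)\sqrt M$ and $(|v|^2+2I-5-\de)v_i\sqrt M$ you anticipate needing. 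The plan is correct and matches the paper's proof in all essentials.
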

\begin{proof}
Taking inner product of the linear equation \eqref{LBE} with the $5$ moments $\sqrt{M},v_i\sqrt{M},(|v|^2+2I-3-\de)\sqrt{M},(v_iv_j-1)\sqrt{M}$ and $(|v|^2+2I-5-\de)v_i\sqrt{M}$ for $i,j=1,2,3$, $i\neq j$ over $\R^3_v\times\R^+_I$ and using
\begin{align*}
    &\int_{\R^3}e^{-|v|^2/2}dv=1,\qquad \int_{\R^3}|v_i|^2e^{-|v|^2/2}dv=1, \qquad \int_{\R^3}|v|^2e^{-|v|^2/2}dv=3,\\
    &\int_{\R^3}|v_i|^2|v_j|^2e^{-|v|^2/2}dv=1, i\neq j,\qquad \int_{\R^3}|v_i|^4e^{-|v|^2/2}dv=3,\qquad \int_{\R^3}|v|^2|v_j|^2e^{-|v|^2/2}dv=5,\\
    &\int_{\R^3}|v|^4e^{-|v|^2/2}dv=15, \qquad \int_{\R^3}|v|^4|v_j|^2e^{-|v|^2/2}dv=35,\qquad\int_{\R^3}|v|^6e^{-|v|^2/2}dv=105,\\
    &\int_{R_+}I^{\de/2-1}e^{-I}dI=\Ga(\frac{\de}{2}),\qquad \int_{R_+}I^{\de/2}e^{-I}dI=\frac{\de}{2}\Ga(\frac{\de}{2}),\qquad \int_{R_+}I^{\de/2+1}e^{-I}dI=\frac{\de}{2}(\frac{\de}{2}+1)\Ga(\frac{\de}{2}),
\end{align*}
 we obtain the fluid-type system 
\begin{align*}
	\begin{cases}
		&\pa_{t}a+\nabla_{x}\cdot b=0,
		\\
		&\partial_{t}b_i+\pa_{x_i}(a+2c)+\nabla_{x}\cdot(\FP_2f,(v_iv-1)\sqrt{M})=0,
		\\
		&\pa_{t}c+\frac{1}{3+\de}\nabla_{x}\cdot b+\frac{1}{6+2\de}\nabla_{x}\cdot (\FP_2f,(|v|^2+2I-5-\de)v_i\sqrt{M})=0,
		\\
		&\pa_t \left( (\FP_2f,(v_iv_j-1)\sqrt{M})+2c\de_{ij} \right)+\pa_{xi}b_j+\pa_{xj}b_i\\
		&\quad\qquad=-(v\cdot\nabla_{x}\FP_2f,(v_iv_j-1)\sqrt{M})-(L\FP_2f,(v_iv_j-1)\sqrt{M}),
		\\&\pa_t (\FP_2f,(|v|^2+2I-5-\de)v_i\sqrt{M})+(10+2\de)\pa_{x_i}c\\
		&\quad\qquad=-(v\cdot\nabla_{x}\FP_2f,(|v|^2+2I-5-\de)v_i\sqrt{M})-(L\FP_2f,(|v|^2+2I-5-\de)v_i\sqrt{M}).
	\end{cases}
\end{align*}
Let $\hat{a}$, $\hat{b}$, $\hat{b}_i$, $\hat{c}$ and $\hat{f}$ denote the Fourier transformation of $a,b,b_i,c$ and $f$ respectively, then we rewrite the above system in terms of $\hat{a}$, $\hat{b}$, $\hat{c}$ and $\hat{f}$:
\begin{align}\label{syshat}
	\begin{cases}
		&\pa_{t}\hat{a}+\mathrm{i}k\cdot \hat{b}=0,
		\\
		&\partial_{t}\hat{b}_i+\mathrm{i}k_i(\hat{a}+2\hat{c})+\mathrm{i}k\cdot(\FP_2\hat{f},(v_iv-1)\sqrt{M})=0,
		\\
		&\pa_{t}\hat{c}+\frac{1}{3+\de}\mathrm{i}k\cdot \hat{b}+\frac{1}{6+2\de}\mathrm{i}k\cdot (\FP_2\hat{f},(|v|^2+2I-5-\de)v_i\sqrt{M})=0,
		\\
		&\pa_t \left( (\FP_2\hat{f},(v_iv_j-1)\sqrt{M})+2\hat{c}\de_{ij} \right)+\mathrm{i}k_i\hat{b}_j+\mathrm{i}k_j\hat{b}_i\\
		&\quad\qquad=-(\mathrm{i}k\cdot v\FP_2\hat{f},(v_iv_j-1)\sqrt{M})-(L\FP_2\hat{f},(v_iv_j-1)\sqrt{M}),
		\\&\pa_t (\FP_2\hat{f},(|v|^2+2I-5-\de)v_i\sqrt{M})+(10+2\de)\mathrm{i}k_i\hat{c}\\
		&\quad\qquad=-(\mathrm{i}k\cdot v\FP_2\hat{f},(|v|^2+2I-5-\de)v_i\sqrt{M})-(L\FP_2\hat{f},(|v|^2+2I-5-\de)v_i\sqrt{M}),
	\end{cases}
\end{align}
where $k\in \Z^3$ since $x\in \T^3$.
For convenience, we denote $p_{ij}(v,I)=(v_iv_j-1)\sqrt{M}$, $p_i(v,I)=(|v|^2+2I-5-\de)v_i\sqrt{M}$. Notice that both $p_{ij}$ and $p_i$ exponentially decay in $v$ and $I$.

We first estimate $\hat{c}$. From the fifth equation of \eqref{syshat}, we integrate by parts to get
\begin{align}\label{c1}
(10+2\de)|k|^2|\hat{c}|^2&=\sum^3_{i=1}\langle\rmi k_i\hat{c},(10+2\de)\rmi k_i\hat{c}\rangle\notag\\
&=\sum^3_{i=1}\left\{-\langle\rmi k_i\hat{c},\pa_t (\FP_2\hat{f},p_i(v,I))\rangle-\langle\rmi k_i\hat{c},-(\mathrm{i}k\cdot v\FP_2\hat{f},p_i(v,I))-(L\FP_2\hat{f},p_i(v,I))\rangle\right\}\notag\\
&=-\pa_{t}\sum^3_{i=1}\langle\rmi k_i\hat{c}, (\FP_2\hat{f},p_i(v,I))\rangle+\sum^3_{i=1}\langle\rmi k_i\pa_t\hat{c}, (\FP_2\hat{f},p_i(v,I))\rangle\notag\\
&\qquad-\sum^3_{i=1}\left\{\langle\rmi k_i\hat{c},-(\mathrm{i}k\cdot v\FP_2\hat{f},p_i(v,I))-(L\FP_2\hat{f},p_i(v,I))\rangle\right\}.
\end{align}
Since $p_i$ exponentially decays in $v$ and $I$, by Cauchy-Schwarz inequality, it holds that
\begin{align}\label{c2}
	\left|\langle\rmi k_i\hat{c},-(\mathrm{i}k\cdot v\FP_2\hat{f},p_i(v,I))\rangle\right|\leq \ep|k|^2|\hat{c}|^2+\frac{C}{\ep}(1+|k|^2)\|\FP_2\hat{f}\|_{L^2_{v,I}}^2.
\end{align} 
Similarly, by Lemma \ref{L2K} and Cauchy-Schwarz inequality,
\begin{align}\label{c3}
	\left|\langle\rmi k_i\hat{c},-(L\FP_2\hat{f},p_i(v,I))\rangle \right|\leq \ep|k|^2|\hat{c}|^2+\frac{C}{\ep}(1+|k|^2)\|\FP_2\hat{f}\|_{L^2_{v,I}}^2.
\end{align} 
Then we use the third equation in \eqref{syshat} to get
\begin{align*}
\langle\rmi k_i\pa_t\hat{c}, (\FP_2\hat{f},p_i(v,I))\rangle=-\langle\rmi k_i\left\{\frac{1}{3+\de}\mathrm{i}k\cdot \hat{b}+\frac{1}{6+2\de}\mathrm{i}k\cdot (\FP_2\hat{f},p_i(v,I))\right\},(\FP_2\hat{f},p_i(v,I))\rangle.	
\end{align*}
A direct application of Cauchy-Schwarz inequality yields
\begin{align}\label{c4}
	|\langle\rmi k_i\pa_t\hat{c}, (\FP_2\hat{f},p_i(v,I))\rangle|\leq \ep_1|k\cdot \hat{b}|^2+\frac{C}{\ep_1}(1+|k|^2)\|\FP_2\hat{f}\|_{L^2_{v,I}}^2.
\end{align}
Collecting \eqref{c1}, \eqref{c2}, \eqref{c3}, \eqref{c4} and choosing $\ep$ to be small enough, it holds that
\begin{align}\label{estc}
	\pa_{t}\rmre\sum^3_{i=1}\langle\rmi k_i\hat{c}, (\FP_2\hat{f},p_i(v,I))\rangle+\la|k|^2|\hat{c}|^2 \leq \ep_1|k\cdot \hat{b}|^2+\frac{C}{\ep_1}(1+|k|^2)\|\FP_2\hat{f}\|_{L^2_{v,I}}^2.
\end{align}
For the estimate of $\hat{b}$, we use the fourth equation in \eqref{syshat} to obtain
\begin{align}\label{b1}
	\sum^3_{i,j=1}|\rmi k_i\hat{b}_j+\rmi k_j\hat{b}_i|^2&=\sum^3_{i,j=1}\langle\rmi k_i\hat{b}_j+\rmi k_j\hat{b}_i,-\pa_t \left( (\FP_2\hat{f},p_{ij}(v,I))+2\hat{c}\de_{ij} \right)\rangle\notag\\
	&\qquad\qquad-\sum^3_{i,j=1}\langle(\rmi k\cdot v\FP_2\hat{f},p_{ij}(v,I))-(L\FP_2\hat{f},p_{ij}(v,I))\rangle.
\end{align}
Similar arguments as in \eqref{c2} and \eqref{c3} show that
\begin{align}\label{b2}
	\left|-\sum^3_{i,j=1}\langle(\rmi k\cdot v\FP_2\hat{f},p_{ij}(v,I))-(L\FP_2\hat{f},p_{ij}(v,I))\rangle\right|\leq \ep|k|^2|\hat{b}|^2+\frac{C}{\ep}(1+|k|^2)\|\FP_2\hat{f}\|_{L^2_{v,I}}^2.
\end{align}
It follows from integrating by parts that
\begin{align}\label{b3}
	&\langle\rmi k_i\hat{b}_j+\rmi k_j\hat{b}_i,-\pa_t \left( (\FP_2\hat{f},p_{ij}(v,I))+2c\de_{ij} \right)\rangle\notag\\
	&=-\pa_t 	\langle\rmi k_i\hat{b}_j+\rmi k_j\hat{b}_i,\left( (\FP_2\hat{f},p_{ij}(v,I))+2c\de_{ij} \right)\rangle+	\langle\pa_t (\rmi k_i\hat{b}_j+\rmi k_j\hat{b}_i),\left( (\FP_2\hat{f},p_{ij}(v,I))+2c\de_{ij} \right)\rangle.
\end{align}
Using the second equation in \eqref{syshat} and Cauchy-Schwarz inequality, it holds that
\begin{align}\label{b4}
	&\left|\langle\pa_t (\rmi k_i\hat{b}_j+\rmi k_j\hat{b}_i),\left( (\FP_2\hat{f},p_{ij}(v,I))+2\hat{c}\de_{ij} \right)\rangle\right|\notag\\
	&\leq C|k|\left|\langle \mathrm{i}k_i(\hat{a}+2\hat{c})+\mathrm{i}k\cdot(\FP_2\hat{f},(v_iv-1)\sqrt{M}),\left( (\FP_2\hat{f},p_{ij}(v,I))+2\hat{c}\de_{ij} \right)\rangle\right|\notag\\
	&\qquad+C|k|\left|\langle \mathrm{i}k_j(\hat{a}+2\hat{c})+\mathrm{i}k\cdot(\FP_2\hat{f},(v_jv-1)\sqrt{M}),\left( (\FP_2\hat{f},p_{ij}(v,I))+2\hat{c}\de_{ij} \right)\rangle\right|\notag\\
	&\leq \ep_2|k|^2|\hat{a}|^2+\frac{C}{\ep_2}|k|^2|\hat{c}|^2+\frac{C}{\ep_2}|k|^2\|\FP_2\hat{f}\|_{L^2_{v,I}}^2.
\end{align}
We notice that we have the identity
\begin{align*}
	\sum^3_{i,j=1}|\rmi k_i\hat{b}_j+\rmi k_j\hat{b}_i|^2=2|k|^2|\hat{b}|^2+2|k\cdot \hat{b}|^2.
\end{align*}
Together with \eqref{b1}, \eqref{b2}, \eqref{b3}, \eqref{b4} and choosing $\ep$ small, it holds that
\begin{align}\label{estb}
	&\pa_t \rmre	\sum^3_{i,j=1}\langle\rmi k_i\hat{b}_j+\rmi k_j\hat{b}_i,\left( (\FP_2\hat{f},p_{ij}(v,I))+2\hat{c}\de_{ij} \right)\rangle+\la|k|^2|\hat{b}|^2 \notag\\
	&\leq\ep_2|k|^2|\hat{a}|^2+\frac{C}{\ep_2}|k|^2|\hat{c}|^2+\frac{C}{\ep_2}|k|^2\|\FP_2\hat{f}\|_{L^2_{v,I}}^2.
\end{align}
Then we estimate $\hat{a}$. The second equation in \eqref{syshat} implies that
\begin{align}\label{a1}
	|k|^2|\hat{a}|^2&=\sum^3_{i=1}\langle\rmi k_i\hat{a},\rmi k_i\hat{a}\rangle\notag\\
	&=-\sum^3_{i=1}\left\{\langle\rmi k_i\hat{a},\partial_{t}\hat{b}_i+2\mathrm{i}k_i\hat{c}+\mathrm{i}k\cdot(\FP_2\hat{f},(v_iv-1)\sqrt{M})\rangle\right\}\notag\\
	&=-\pa_{t}\sum^3_{i=1}\langle\rmi k_i\hat{a},\hat{b}_i\rangle+\sum^3_{i=1}\langle\rmi k_i\pa_{t}\hat{a},\hat{b}_i\rangle\notag\\
	&\qquad\qquad\qquad\qquad-\sum^3_{i=1}\left\{\langle\rmi k_i\hat{a},2\mathrm{i}k_i\hat{c}+\mathrm{i}k\cdot(\FP_2\hat{f},(v_iv-1)\sqrt{M})\rangle\right\}.
\end{align}
Cauchy-Schwarz yields 
\begin{align}\label{a2}
	\left|-\langle\rmi k_i\hat{a},2\mathrm{i}k_i\hat{c}+\mathrm{i}k\cdot(\FP_2\hat{f},(v_iv-1)\sqrt{M})\rangle\right|\leq \ep|k|^2|\hat{a}|^2+\frac{C}{\ep}|k|^2|\hat{c}|^2+\frac{C}{\ep}|k|^2\|\FP_2\hat{f}\|_{L^2_{v,I}}^2.
\end{align}
From the first equation in \eqref{syshat}, we obtain
\begin{align}\label{a3}
	\left|\sum^3_{i=1}\langle\rmi k_i\pa_{t}\hat{a},\hat{b}_i\rangle\right|=\left|\sum^3_{i=1}\langle\rmi k_i(\mathrm{i}k\cdot \hat{b}),\hat{b}_i\rangle\right|=|k\cdot\hat{b}|^2.
\end{align}
By \eqref{a1}, \eqref{a2} and \eqref{a3}, choosing $\ep$ small, we get
\begin{align}\label{esta}
	\pa_{t}\rmre \sum^3_{i=1}\langle\rmi k_i\hat{a},\hat{b}_i\rangle+\la|k|^2|\hat{a}|^2\leq |k\cdot\hat{b}|^2+C|k|^2|\hat{c}|^2+C|k|^2\|\FP_2\hat{f}\|_{L^2_{v,I}}^2.
\end{align}
Multiplying \eqref{esta} by $\ep$ and taking sum with \eqref{estb}, one has
\begin{align*}
	&\pa_t \rmre	\sum^3_{i,j=1}\langle\rmi k_i\hat{b}_j+\rmi k_j\hat{b}_i,\left( (\FP_2\hat{f},p_{ij}(v,I))+2\hat{c}\de_{ij} \right)\rangle+\la|k|^2|\hat{b}|^2+\ep\pa_{t}\rmre \sum^3_{i=1}\langle\rmi k_i\hat{a},\hat{b}_i\rangle+\la\ep|k|^2|\hat{a}|^2\notag\\
	&\leq \ep|k\cdot\hat{b}|^2+C\ep|k|^2|\hat{c}|^2+C\ep|k|^2\|\FP_2\hat{f}\|_{L^2_{v,I}}^2+\ep_2|k|^2|\hat{a}|^2+\frac{C}{\ep_2}|k|^2|\hat{c}|^2+\frac{C}{\ep_2}|k|^2\|\FP_2\hat{f}\|_{L^2_{v,I}}^2.
\end{align*}
We first choose $\ep$ small, then choose $\ep_1$ small to get
\begin{align}\label{estab}
	&\pa_t \rmre\left(\sum^3_{i,j=1}\langle\rmi k_i\hat{b}_j+\rmi k_j\hat{b}_i,\left( (\FP_2\hat{f},p_{ij}(v,I))+2\hat{c}\de_{ij} \right)\rangle+\sum^3_{i=1}\langle\rmi k_i\hat{a},\hat{b}_i\rangle\right)+\la|k|^2(|\hat{a}|^2+|\hat{b}|^2)\notag\\
	&\leq C|k|^2|\hat{c}|^2+C|k|^2\|\FP_2\hat{f}\|_{L^2_{v,I}}^2.
\end{align}
Multiplying \eqref{estab} by $\ep$ and taking sum with \eqref{estc} one has
\begin{align}
	&\pa_t \ep\rmre\left(\sum^3_{i,j=1}\langle\rmi k_i\hat{b}_j+\rmi k_j\hat{b}_i,\left( (\FP_2\hat{f},p_{ij}(v,I))+2\hat{c}\de_{ij} \right)\rangle+\sum^3_{i=1}\langle\rmi k_i\hat{a},\hat{b}_i\rangle\right)+\la\ep|k|^2(|\hat{a}|^2+|\hat{b}|^2)\notag\\
	&\qquad+\pa_{t}\rmre\sum^3_{i=1}\langle\rmi k_i\hat{c}, (\FP_2\hat{f},p_i(v,I))\rangle+\la|k|^2|\hat{c}|^2 \notag\\
	&\leq C\ep|k|^2|\hat{c}|^2+C\ep|k|^2\|\FP_2\hat{f}\|_{L^2_{v,I}}^2+\ep_1|k\cdot \hat{b}|^2+\frac{C}{\ep_1}(1+|k|^2)\|\FP_2\hat{f}\|_{L^2_{v,I}}^2.\notag
\end{align}
Choosing $\ep$ small first, then choosing $\ep_2$ small and taking integral with respect to $k$, it holds that
\begin{align}\label{estabc}
	&\pa_t \rmre\,  \CE^{int}(\hat{f})+\la\frac{|k|^2}{1+|k|^2}(\|\hat{a}(t)\|^2+\|\hat{b}(t)\|^2+\|\hat{c}(t)\|^2)\notag\\
	&\leq C\|\FP_2\hat{f}\|_{L^2_{k,v,I}}^2,
\end{align}
where
\begin{align*}
\CE^{int}(\hat{f}):=&\int_{\T^3}\left\{\sum^3_{i,j=1}\langle\rmi k_i\hat{b}_j+\rmi k_j\hat{b}_i,\left( (\FP_2\hat{f},p_{ij}(v,I))+2\hat{c}\de_{ij} \right)\rangle\right.\notag\\
&\qquad\qquad\qquad\left.+\sum^3_{i=1}\langle\rmi k_i\hat{a},\hat{b}_i\rangle+\sum^3_{i=1}\langle\rmi k_i\hat{c}, (\FP_2\hat{f},p_i(v,I))\rangle\right\}dx.
\end{align*}
It is direct to see that 
$$
\left|\rmre\,\CE^{int}(\hat{f})\right|\leq C\|\hat{f}\|_{L^2_{k,v,I}}.
$$
Then we can choose $\ep$ small such that
\begin{align*}
	\CE(\hat{f}):=\|\hat{f}\|_{L^2_{k,v,I}}^2+\ep\rmre\,\CE^{int}(\hat{f})\sim\|\hat{f}\|_{L^2_{k,v,I}}^2.
\end{align*}
We multiply \eqref{estabc} by a small number and take sum with \eqref{co} to get 
\begin{align}\label{estCE}
	\pa_{t}\CE(\hat{f}(t))+\la\CE(\hat{f}(t))\leq 0,
\end{align}
for $k\neq0$. In order to get exponential decay, the case when $k=0$ needs to be estimated separately.
From the assumption $(M_0, J_0, E_0) = (0,0,0)$, we obtain
\begin{align*}
	\int_{\T^3}a(0,x)dx=\int_{\T^3}b_i(0,x)dx=\int_{\T^3}c(0,x)dx=0,\ i=1,2,3,
\end{align*}
which yields
\begin{align*}
	\int_{\T^3}a(t,x)dx=\int_{\T^3}b_i(t,x)dx=\int_{\T^3}c(t,x)dx=0,\ i=1,2,3,
\end{align*}
by
$$
\frac{d}{dt}\int_{\T^3}(a,b,c)(t,x)dx=0
$$
from the first three equations in -\eqref{syshat}. Hence we have
$$
(\hat{a},\hat{b},\hat{c})(t,0)=\int_{\T^3}(a,b,c)(t,x)dx=0,
$$
which implies \eqref{estCE} for $k=0$. Then \eqref{L2decay} follows from \eqref{estCE} and Gronwall's inequality. Thus Lemma \ref{leL2} is proven.
\end{proof}

\section{Linear $L^\infty$ decay}

In this section, following the strategy in \cite{GuoY}, we consider the linear problem \eqref{LBE} in the weighted $L^\infty$ framework via the iteration technique as well as the interplay with $L^2$ properties. Recall $w(v,I)=(1+|v|+\sqrt{I})^\be$. Denote the function $h=h(t,x,v,I)=(wf)(t,x,v,I)$ and the operator $K_wh=wK\frac{h}{w}$. We rewrite the linearized Boltzmann equation in terms of $h$:
\begin{align}\label{LBEh}
	\pa_th+v\cdot \na_x h+\nu h-K_wh=0,\qquad h(0,x,v,I)=h_0(x,v,I)=(wf_0)(x,v,I).
\end{align}
Let $S(t)$ be the operator which solves \eqref{LBEh}, then the mild solution of \eqref{LBEh} is defined by
\begin{align}\label{mildlinear}
	\left\{S(t)h_0\right\}(t,x,v)&=h(t,x,v,I)\notag\\
	&=e^{-\nu(v,I)t}h_0(x-vt,v,I)+\int_0^t e^{-\nu(v,I)(t-s)}(K_wh)(s,x-v(t-s),v,I)ds.
\end{align}
Motivated by the $L^2\cap L^\infty$ approach developed by Guo \cite{GuoY}, we have the following $L^\infty$ decay property for $S(t)h_0$:
\begin{lemma}\label{lelinearLinfty}
	Assume $\left\{S(t)h_0\right\}(t,x,v)=h(t,x,v,I)$ be the solution for \eqref{LBEh} with the initial data $f(0,x,v,I)=f_0(x,v,I)$ satisfying \eqref{M}, \eqref{J} \eqref{E} and $(M_0,J_0,E_0)=0$, then for $\be>5$, there exists constants $\la$ and $C$ such that
	\begin{align}\label{liinfde}
	\|S(t)h_0\|_{\infty}=\|h(t)\|_{\infty}\leq Ce^{-\la t}\|h_0\|_\infty,
	\end{align}
for any $t\geq 0$.
\end{lemma}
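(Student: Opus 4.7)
The plan is to follow the $L^2 \cap L^\infty$ scheme of Guo, iterating the mild formulation \eqref{mildlinear} twice and then splitting the resulting integrals into bounded/unbounded regions in $(v, I, v_*, I_*)$, with Lemma \ref{lemmaK} handling the unbounded parts and the linear $L^2$ decay Lemma \ref{leL2} handling the bounded parts. First I would substitute \eqref{mildlinear} into the $K_w h$ term of \eqref{mildlinear} itself to produce
\begin{align*}
h(t,x,v,I) &= e^{-\nu(v,I)t}h_0(x-vt,v,I) + \int_0^t e^{-\nu(v,I)(t-s)}\!\!\int k_w(v,v_*,I,I_*) e^{-\nu(v_*,I_*)s}h_0(\cdots)\, dv_*dI_*ds \\
&\quad + \int_0^t\!\!\int_0^s\!\!\int\!\! e^{-\nu(v,I)(t-s)}e^{-\nu(v_*,I_*)(s-s_1)}k_w(v,v_*,I,I_*)(K_w h)(s_1,\cdots)\, dv_*dI_*ds_1ds.
\end{align*}
The first two terms are easily bounded by $C e^{-\nu_0 t}\|h_0\|_\infty$ thanks to \eqref{estk} (with $\ep=m=0$) and $\nu(v,I) \geq \nu_0$ from Lemma \ref{lenu}.

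The heart of the proof is the triple-integral term, which I would expand once more by writing $K_w h = \int k_w(v_*, v_{**}, I_*, I_{**}) h(s_1,\cdots)\, dv_{**} dI_{**}$. On this expanded integral, I would choose a large parameter $N$ and discard the region $\{|v|\geq N\}\cup\{I\geq N\}\cup\{|v_*|\geq N\}\cup\{I_*\geq N\}\cup\{|v-v_*|\geq N\}\cup\{|v_*-v_{**}|\geq N\}\cup\{I_{**}\geq N\}$. On this bad region, Lemma \ref{lemmaK} applied with a nontrivial $\ep>0$ (e.g.\ the bound $\int k_w e^{\ep|v-v_*|^2}(1+I_*)^m dv_*dI_* \leq C/(1+|v|+I^{1/4})$) gives a factor of order $e^{-\ep N^2}$ or $N^{-1/4}$, yielding a contribution $\leq C(N)\sup_{s\leq t}\|h(s)\|_\infty$ with $C(N) \to 0$ as $N\to\infty$. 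The exponential growth factor $e^{\ep|v-v_*|^2}$ and the polynomial $(1+I_*)^m$ in Lemma \ref{lemmaK} are exactly what let us absorb the size of $k_w(v_*, v_{**}, I_*, I_{**})$ for large relative velocity, and more importantly what lets us extract decay when $I_*$ is large --- this additional $I$-direction is precisely the new difficulty over the monatomic case.

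On the complementary bounded region where all velocity and internal-energy variables are $\leq N$, I would first cut out the small time slice $s-s_1 \leq \kappa$ (which contributes $O(\kappa)\sup_s\|h(s)\|_\infty$), and then on $s - s_1 \geq \kappa$ apply Cauchy--Schwarz in $(v_*, v_{**}, I_*, I_{**})$ to bound the integrand by a product of bounded weights with $\|h(s_1, x - v(t-s) - v_*(s-s_1), \cdot, \cdot)\|_{L^2_{v_{**},I_{**}}}$. The change of variable $y = x - v(t-s) - v_*(s-s_1)$ has Jacobian bounded below on $\{s-s_1\geq \kappa\}$, allowing passage to $\int_{\T^3}\|h(s_1,y,\cdot,\cdot)\|_{L^2_{v_{**},I_{**}}}^2 dy = \|f(s_1)\|^2$ (using $w$ is bounded on the bounded region). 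Lemma \ref{leL2} then gives $\|f(s_1)\| \leq Ce^{-\lambda s_1}\|f_0\| \leq Ce^{-\lambda s_1}\|h_0\|_\infty$.

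Putting these estimates together produces, for $t\geq T_0$ large, an inequality of the form
\[
\|h(t)\|_\infty \leq C_0 e^{-\lambda' t}\|h_0\|_\infty + \Big(C(N)+C\kappa\Big)\sup_{0\leq s\leq t}\|h(s)\|_\infty,
\]
after which one first selects $N$ large then $\kappa$ small so that $C(N)+C\kappa \leq 1/2$, and iterates on successive time intervals of length $T_0$ to obtain \eqref{liinfde}. The main obstacle I expect is the bookkeeping on the internal-energy side: keeping the exponential factor $e^{\ep|v-v_*|^2}$ and the polynomial $(1+I_*)^m$ alive across the double-$k_w$ iteration so as to kill both $|v|$-large and $I$-large regions simultaneously, while still having the Jacobian of the $v_*$-substitution non-degenerate when passing to the $L^2$ norm of $f$.
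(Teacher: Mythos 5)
Your proposal is correct and follows essentially the same route as the paper: double iteration of the mild form, a five-way splitting into large $|v|$ or $I$, large relative velocities, large $I_*$ or $I_{**}$, the short time slice $s-s_1\le\kappa$, and the remaining bounded region where Cauchy--Schwarz, the change of variables $y=x_1-v_*(s-s_1)$, and Lemma \ref{leL2} convert everything to the $L^2$ decay. The only step you gloss over is that the paper first replaces $k_w$ by a smooth compactly supported approximation $k_N$ on the bounded region (to tame the $|v-v_*|^{-1}$-type singularity before Cauchy--Schwarz and the $v_*$-substitution), and it closes the estimate by absorbing $\sup_{s\le t}\|e^{\la s}h(s)\|_\infty$ directly rather than iterating over time intervals; both are routine variants.
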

\begin{proof}
	First we let $\la$ be defined in Lemma \ref{leL2}. Actually we can choose $\la$ so small such that $0<\la\leq \frac{\nu_0}{2}$ where $\nu_0$ is the lower bound of $\nu$. Using \eqref{mildlinear} and recalling the definition of $k_w$ \eqref{defkw}, we have
	\begin{align*}
		h(t,x,v,I)&=e^{-\nu(v,I)t}h_0(x-vt,v,I)\notag\\
		&\qquad+\int_0^t e^{-\nu(v,I)(t-s)}\int_{\R^3\times \R_+}k_w(v,v_*,I,I_*)h(s,x-v(t-s),v_*,I_*)dv_*dI_* ds\notag\\
		&=e^{-\nu(v,I)t}h_0(x-vt,v,I)\notag\\
		&\qquad+\int_0^t e^{-\nu(v,I)(t-s)}\int_{\R^3\times \R_+}k_w(v,v_*,I,I_*)e^{-\nu(v_*,I_*)s}h_0(x-v(t-s),v_*,I_*)dv_*dI_* ds\notag\\
		&\qquad+\int_0^t e^{-\nu(v,I)(t-s)}\int_{\R^3\times \R_+}k_w(v,v_*,I,I_*)\int_0^s e^{-\nu(v_*,I_*)(s-s_1)}\notag\\
		&\qquad\quad\times\int_{\R^3\times \R_+}k_w(v_*,v_{**},I_*,I_{**})h(s_1,x_1-v_*(s-s_1),v_{**},I_{**})dv_{**}dI_{**}ds_1dv_*dI_* ds,
	\end{align*}
where $x_1=x-v(t-s)$. Then it holds directly that
\begin{align*}
	|h(t,x,v,I)|\leq e^{-\nu_0t}\|h_0\|_\infty+H_1+H_2,
\end{align*}
where $\nu_0$ is defined in \eqref{estnu},
\begin{align*}
	H_1=\|h_0\|_\infty\int_0^t e^{-\nu(v,I)(t-s)}\int_{\R^3\times \R_+}k_w(v,v_*,I,I_*)e^{-\nu(v_*,I_*)s}dv_*dI_* ds,
\end{align*}
and
\begin{align*}
	H_2&=\int_0^t e^{-\nu(v,I)(t-s)}\int_{\R^3\times \R_+}k_w(v,v_*,I,I_*)\int_0^s e^{-\nu(v_*,I_*)(s-s_1)}\notag\\
	&\qquad\quad\times\int_{\R^3\times \R_+}k_w(v_*,v_{**},I_*,I_{**})|h(s_1,x_1-v_*(s-s_1),v_{**},I_{**})|dv_{**}dI_{**}ds_1dv_*dI_* ds.
\end{align*}
We apply \eqref{estnu} and \eqref{estk} to get
\begin{align}\label{H1}
	H_1&\leq\|h_0\|_\infty\int_0^t e^{-\nu_0(t-s)}\int_{\R^3\times \R_+}k_w(v,v_*,I,I_*)e^{-\nu_0s}dv_*dI_* ds\notag\\
	&\leq Cte^{-\nu_0t}\|h_0\|_\infty\int_{\R^3\times \R_+}k_w(v,v_*,I,I_*)dv_*dI_*\notag\\
	&\leq Ce^{-\la t}\|h_0\|_\infty.
\end{align}
For $H_2$, we divided it into five cases. First by Fubini's theorem, we rewrite
\begin{align*}
	H_2&=\int_0^t\int_0^s\int_{(\R^3)^2\times (\R_+)^2} e^{-\nu(v,I)(t-s)}e^{-\nu(v_*,I_*)(s-s_1)}k_w(v,v_*,I,I_*) \notag\\
	&\qquad\quad\times k_w(v_*,v_{**},I_*,I_{**})|h(s_1,x_1-v_*(s-s_1),v_{**},I_{**})|dv_{**}dv_*dI_{**}dI_* ds_1ds.
\end{align*}
Define 
\begin{align*}
	\chi_1=\chi_{\{|v|\geq N\}}&+\chi_{\{I\geq N\}},\
	  \chi_2=\chi_{\{|v|\leq N,|v_*|\geq 2N\}}+\chi_{\{|v_*|\leq 2N,|v_{**}|\geq 3N\}},\ \chi_3=\chi_{\{I_*\geq N\}}+\chi_{\{I_{**}\geq N\}},\notag\\
&\chi_4=\chi_{\{s-s_1\leq\ka\}},\quad \chi_5=\chi_{\{|v|\leq N,|v_*|\leq 2N, |v_{**}|\leq3N\}}\chi_{\{I,I_*,I_{**}\leq N\}}\chi_{\{s-s_1\geq \ka\}}.
\end{align*}
We have
\begin{align*}
		H_2&\leq\sum^5_{i=0}\int_0^t\int_0^s\int_{(\R^3)^2\times (\R_+)^2} \chi_ie^{-\nu(v,I)(t-s)}e^{-\nu(v_*,I_*)(s-s_1)}k_w(v,v_*,I,I_*) \notag\\
	&\qquad\quad\times k_w(v_*,v_{**},I_*,I_{**})|h(s_1,x_1-v_*(s-s_1),v_{**},I_{**})|dv_{**}dv_*dI_{**}dI_* ds_1ds\notag\\
	&=\sum^5_{i=0}H_{2i}.
\end{align*}
Consider $H_{21}$, which corresponds to the case that $|v|\geq N$ or $I\geq N$. Using \eqref{estnu}, one has
\begin{align}\label{1H21}
	H_{21}&\leq \int_0^t\int_0^s\int_{(\R^3)^2\times (\R_+)^2} \chi_1e^{-\nu_0(t-s)}e^{-\nu_0(s-s_1)}k_w(v,v_*,I,I_*) \notag\\
	&\qquad\quad\times k_w(v_*,v_{**},I_*,I_{**})\|h(s_1)\|_{\infty}dv_{**}dv_*dI_{**}dI_* ds_1ds\notag\\
	&\leq Ce^{-\la t}\sup_{0\leq s\leq t}\|e^{\la s}h(s)\|_{\infty}\int_0^t\int_0^s\int_{(\R^3)^2\times (\R_+)^2} \chi_1e^{-\frac{\nu_0}{2} (t-s_1)}k_w(v,v_*,I,I_*) \notag\\
	&\qquad\quad\times k_w(v_*,v_{**},I_*,I_{**})dv_{**}dv_*dI_{**}dI_* ds_1ds\notag\\
	&\leq Ce^{-\la t}\sup_{0\leq s\leq t}\|e^{\la s}h(s)\|_{\infty}\notag\\
	&\qquad\quad\times\int_{(\R^3)^2\times (\R_+)^2}\chi_1 k_w(v,v_*,I,I_*)k_w(v_*,v_{**},I_*,I_{**})dv_{**}dv_*dI_{**}dI_*.
\end{align}
It follows from \eqref{estk} that
\begin{align}\label{12kw}
	&\int_{(\R^3)^2\times (\R_+)^2}\chi_1 k_w(v,v_*,I,I_*)k_w(v_*,v_{**},I_*,I_{**})dv_{**}dv_*dI_{**}dI_*\notag\\
	&\leq C\int_{\R^3\times \R_+}\chi_1 k_w(v,v_*,I,I_*)dv_*dI_*\notag\\
	&\leq C\frac{\chi_{\{|v|\geq N\}}+\chi_{\{I\geq N\}}}{1+|v|+I^{1/8}}\notag\\
	&\leq \frac{1}{N^{1/8}}.
\end{align}
We conclude from \eqref{1H21} and \eqref{12kw} that
\begin{align}\label{H21}
	H_{21}&\leq \frac{C}{N^{1/8}}e^{-\la t}\sup_{0\leq s\leq t}\|e^{\la s}h(s)\|_{\infty}.
\end{align}
For $H_{22}$, we first notice that $|v|\leq N$, $|v_*|\geq 2N$ implies $|v-v_*|\geq N$, and $|v_*|\leq 2N$, $|v_{**}|\geq 3N$ implies $|v_*-v_{**}|\geq N$. Then similar arguments in \eqref{1H21} and \eqref{12kw} show that
\begin{align}\label{1H22}
	H_{22}&\leq Ce^{-\la t}\sup_{0\leq s\leq t}\|e^{\la s}h(s)\|_{\infty}\notag\\
	&\qquad\quad\times\int_{(\R^3)^2\times (\R_+)^2}\chi_2 k_w(v,v_*,I,I_*)k_w(v_*,v_{**},I_*,I_{**})dv_{**}dv_*dI_{**}dI_*.
\end{align}
Again use \eqref{estk} to obtain
\begin{align}\label{21kw}
	&\int_{(\R^3)^2\times (\R_+)^2}\chi_2 k_w(v,v_*,I,I_*)k_w(v_*,v_{**},I_*,I_{**})dv_{**}dv_*dI_{**}dI_*\notag\\
	&=\int_{(\R^3)^2\times (\R_+)^2}\chi_2 k_w(v,v_*,I,I_*)e^{\frac{|v-v_*|^2}{64}}e^{-\frac{|v-v_*|^2}{64}}\notag\\
	&\qquad\qquad\qquad\qquad\times k_w(v_*,v_{**},I_*,I_{**})e^{\frac{|v_*-v_{**}|^2}{64}}e^{-\frac{|v_*-v_{**}|^2}{64}}dv_{**}dv_*dI_{**}dI_*\notag\\
	&\leq\frac{C}{N^{1/8}}\int_{(\R^3)^2\times (\R_+)^2} k_w(v,v_*,I,I_*)e^{\frac{|v-v_*|^2}{64}}k_w(v_*,v_{**},I_*,I_{**})e^{\frac{|v_*-v_{**}|^2}{64}}dv_{**}dv_*dI_{**}dI_*\notag\\
	&\leq \frac{1}{N^{1/8}}.
\end{align}
Combining \eqref{1H22} and \eqref{21kw}, we get
\begin{align}\label{H22}
	H_{22}&\leq \frac{C}{N^{1/8}}e^{-\la t}\sup_{0\leq s\leq t}\|e^{\la s}h(s)\|_{\infty}.
\end{align}
Similarly as above, we have for $H_{23}$ that
\begin{align}\label{H23}
	H_{23}&\leq Ce^{-\la t}\sup_{0\leq s\leq t}\|e^{\la s}h(s)\|_{\infty}\notag\\
	&\ \times\int_{(\R^3)^2\times (\R_+)^2}\chi_3 k_w(v,v_*,I,I_*)\left(\frac{1+I_*}{1+I_*}\right)^{1/8}k_w(v_*,v_{**},I_*,I_{**})\left(\frac{1+I_{**}}{1+I_{**}}\right)^{1/8}dv_{**}dv_*dI_{**}dI_*\notag\\
	&\leq \frac{C}{N^{1/8}}e^{-\la t}\sup_{0\leq s\leq t}\|e^{\la s}h(s)\|_{\infty}.\notag\\
	&\qquad\times\int_{(\R^3)^2\times (\R_+)^2} k_w(v,v_*,I,I_*)(1+I_*)^{1/8}k_w(v_*,v_{**},I_*,I_{**})(1+I_{**})^{1/8}dv_{**}dv_*dI_{**}dI_*\notag\\
	&\leq \frac{C}{N^{1/8}}e^{-\la t}\sup_{0\leq s\leq t}\|e^{\la s}h(s)\|_{\infty}.
\end{align}
It holds for $H_{24}$ that
\begin{align}\label{H24}
	H_{24}&\leq \int_0^t\int_{s-\ka}^s\int_{(\R^3)^2\times (\R_+)^2} e^{-\nu_0(t-s)}e^{-\nu_0(s-s_1)}k_w(v,v_*,I,I_*) \notag\\
	&\qquad\quad\times k_w(v_*,v_{**},I_*,I_{**})\|h(s_1)\|_{\infty}dv_{**}dv_*dI_{**}dI_* ds_1ds\notag\\
	&\leq C\ka e^{-\la t}\sup_{0\leq s\leq t}\|e^{\la s}h(s)\|_{\infty}.
\end{align}
The last step is to estimate $H_{25}$, which is the case that $|v|\leq N,|v_*|\leq 2N, |v_{**}|\leq3N$, $I,I_*,I_{**}\leq N$ and $s-s_1\geq \ka$. Notice that in this case, all the velocity and internal energy variables are bounded by $N$. We can choose a smooth function $k_N=k_N(v,v_*,I,I_*)$ with compact support such that
\begin{align*}
	\sup_{|v|\leq 3N,I\leq 3N}\int_{|v_*|\leq 3N,I_*\leq 3N}|k_w(v,v_*,I,I_*)-k_N(v,v_*,I,I_*)|dv_*dI_* \leq \frac{C}{N}.
\end{align*}
In fact, such function can be constructed by removing the singularity using a smooth cut-off function to restrict $k_w$ in the region $\{|v|\leq 3N,I\leq 3N,|v_*|\leq 3N,I_*\leq 3N,|v-v_*|\geq C_N,I\geq C_N, I_*\geq C_N\}$, where $C_N$ is a small number which depends on $N$. A direct calculation shows that
\begin{align}\label{1H25}
	H_{25}&\leq \int_0^t\int_0^s\int_{(\R^3)^2\times (\R_+)^2} \chi_5e^{-\nu_0(t-s)}e^{-\nu_0(s-s_1)}\left|k_w(v,v_*,I,I_*)-k_N(v,v_*,I,I_*)\right| \notag\\
	&\qquad\quad\times k_w(v_*,v_{**},I_*,I_{**})\|h(s_1)\|_{\infty}dv_{**}dv_*dI_{**}dI_* ds_1ds\notag\\
	&+\int_0^t\int_0^s\int_{(\R^3)^2\times (\R_+)^2} \chi_5e^{-\nu_0(t-s)}e^{-\nu_0(s-s_1)}k_w(v,v_*,I,I_*) \notag\\
	&\qquad\quad\times \left|k_w(v_*,v_{**},I_*,I_{**})-k_N(v,v_*,I,I_*)\right|\|h(s_1)\|_{\infty}dv_{**}dv_*dI_{**}dI_* ds_1ds\notag\\
	&+\int_0^t\int_0^s\int_{(\R^3)^2\times (\R_+)^2} \chi_5e^{-\nu_0(t-s)}e^{-\nu_0(s-s_1)}k_N(v,v_*,I,I_*) \notag\\
	&\qquad\quad\times k_N(v_*,v_{**},I_*,I_{**})|h(s_1,x_1-v_*(s-s_1),v_{**},I_{**})|dv_{**}dv_*dI_{**}dI_* ds_1ds\notag\\
	&\leq\frac{C}{N^{1/8}}e^{-\la t}\sup_{0\leq s\leq t}\|e^{\la s}h(s)\|_{\infty}+H_3,
\end{align}
where
\begin{align*}
	H_3=&C_N\int_0^t\int_0^s\int_{(\R^3)^2\times (\R_+)^2} \chi_5e^{-\nu_0(t-s)}e^{-\nu_0(s-s_1)}\notag\\
	&\qquad\qquad\times|h(s_1,x_1-v_*(s-s_1),v_{**},I_{**})|dv_{**}dv_*dI_{**}dI_* ds_1ds.
\end{align*}
We use change of variables $y=x_1-v_*(s-s_1)$ to obtain that
\begin{align*}
	H_3&\leq C_{\be,N,\ka}\int_0^t\int_0^s \chi_5e^{-\nu_0(t-s)}e^{-\nu_0(s-s_1)}\left(1+(s-s_1)^3\right)\notag\\
	&\qquad\qquad\qquad\times\left(\int_{\T^3\times \R^3\times \R_+}|f(s_1,y,v_{**},I_{**})|dydv_{**}dI_{**}\right) ds_1ds\notag\\
	&\leq C_{\be,N,\ka}\int_0^t\int_0^s \chi_5e^{-\nu_0(t-s)}e^{-\nu_0(s-s_1)}\left(1+(s-s_1)^3\right)\notag\\
	&\qquad\qquad\qquad\times\left(\int_{\T^3\times \R^3\times \R_+}|f(s_1,y,v_{**},I_{**})|^2dydv_{**}dI_{**}\right)^\frac{1}{2} ds_1ds.
\end{align*}
Then we can apply Lemma \ref{leL2} and the fact that
$$
\|f_0\|^2=\int_{\T^3\times \R^3\times \R_+}|f_0(x,v,I)|^2dxdvdI\leq \|h_0\|^2_\infty \int_{\T^3\times \R^3\times \R_+}\frac{1}{w^2(v,I)}dxdvdI\leq C\|h_0\|^2_\infty
$$
to get
\begin{align}\label{H3}
	H_3&\leq C_{\be,N,\ka}e^{-\la t}\|f_0\|\int_0^t\int_0^se^{-\nu_0(t-s)}e^{-\nu_0(s-s_1)}\left(1+(s-s_1)^3\right) ds_1ds\notag\\
	&\leq C_{\be,N,\ka}e^{-\la t}\|h_0\|_\infty.
\end{align}
We have from \eqref{1H25} and \eqref{H3} that
\begin{align}\label{H25}
	H_{25}\leq\frac{C}{N^{1/8}}e^{-\la t}\sup_{0\leq s\leq t}\|e^{\la s}h(s)\|_{\infty}+C_{\be,N,\ka}e^{-\la t}\|h_0\|_\infty.
\end{align}
Collecting \eqref{H1}, \eqref{H21}, \eqref{H22}, \eqref{H23}, \eqref{H24} and \eqref{H25}, it holds that
\begin{align*}
	|h(t,x,v,I)|\leq C_{\be,N,\ka}e^{-\la t}\|h_0\|_\infty+\frac{C}{N^{1/8}}e^{-\la t}\sup_{0\leq s\leq t}\|e^{\la s}h(s)\|_{\infty}+C\ka e^{-\la t}\sup_{0\leq s\leq t}\|e^{\la s}h(s)\|_{\infty}.
\end{align*}
We can multiply $e^{\la t}$ on both sides of the above inequalty and take the $L^\infty$ norm, then choose $N$ large and $\ka$ small to get
\begin{align*}
	\sup_{0\leq s\leq t}\|e^{\la s}h(s)\|_{\infty}\leq C\|h_0\|_\infty.
\end{align*}
Hence, Lemma \ref{lelinearLinfty} is proven by the above inequality.
\end{proof}

\section{Nonlinear $L^\infty$ decay}
In this section, we obtain the nonlinear $L^\infty$ decay to complete the
\begin{proof}[Proof of Theorem \ref{global}]
	We rewrite the Boltzmann equation \eqref{BE} in terms of $h=h(t,x,v,I)=(wf)(t,x,v,I)$ as follows:
	\begin{align*}
		\pa_th+v\cdot \na_x h+\nu h=K_wh+w\Ga (\frac{h}{w},\frac{h}{w}),   \quad &\dis h(0,x,v,I)=h_0(x,v,I)=(wf_0)(x,v,I).
	\end{align*}
	Recalling the definition of $S(t)$ \eqref{mildlinear} which solves the linear problem \eqref{LBEh}, we have
	\begin{align*}
		h(t,x,v,I)&=\left\{S(t)h_0\right\}(t,x,v)+\int^t_0\left\{S(t-s)w\Ga(\frac{h}{w},\frac{h}{w})(s)\right\}(s,x-v(t-s),v,I)ds\notag\\
		&=\left\{S(t)h_0\right\}(t,x,v)+\int^t_0e^{-\nu(v,I)(t-s)}\left\{w\Ga(\frac{h}{w},\frac{h}{w})(s)\right\}(s,x-v(t-s),v,I)ds\notag\\
		&\qquad+\int^t_0\int^t_se^{-\nu(v,I)(t-s_1)}K_w\left\{S(s_1-s)w\Ga(\frac{h}{w},\frac{h}{w})(s)\right\}(s,x-v(t-s),v,I)d{s_1}ds.
	\end{align*}
	Then it holds by \eqref{liinfde} that
	\begin{align}\label{esth1}
		\|h(t)\|_\infty\leq Ce^{-\la t}\|h_0\|_\infty+J_1+J_2,
	\end{align}
	where
	\begin{align*}
		J_1=\int^t_0e^{-\nu(v,I)(t-s)}\left|\left\{w\Ga(\frac{h}{w},\frac{h}{w})(s)\right\}(s,x-v(t-s),v,I)\right| ds,
	\end{align*}
and
\begin{align*}
	J_2=&\int^t_0\int^t_se^{-\nu(v,I)(t-s_1)}\int_{\R^3\times\R_+}k_w(v,v_*,I,I_*)\notag\\
	&\qquad\times\left|\left\{S(s_1-s)w\Ga(\frac{h}{w},\frac{h}{w})(s)\right\}(s,x-v(t-s),v_*,I_*)\right|dv_*dI_*d{s_1}ds.
\end{align*}
By \eqref{estGa} in Lemma \ref{lebound}, one has
\begin{align}\label{J1}
	J_1&\leq C\int^t_0e^{-\nu(v,I)(t-s)}\nu(v,I)\|h(s)\|^2_\infty ds\notag\\
	&\leq Ce^{-\frac{\la}{4} t}\sup_{0\leq s\leq t}\|e^{\frac{\la}{4} s}h(s)\|^2_{\infty}\int^t_0e^{-\frac{\nu(v,I)}{2}(t-s)}\nu(v,I) ds\notag\\
	&\leq Ce^{-\frac{\la}{4} t}\sup_{0\leq s\leq t}\|e^{\frac{\la}{4} s}h(s)\|^2_{\infty}.
\end{align}
For $J_2$, we use \eqref{estGa} and \eqref{liinfde} to get
\begin{align}\label{J2}
	J_2&\leq \int^t_0\int^t_se^{-\nu(v,I)(t-s_1)}\int_{\R^3\times\R_+}k_w(v,v_*,I,I_*)\nu(v_*,I_*)\notag\\
	&\qquad\times\left|\left\{S(s_1-s)\frac{w}{\nu}\Ga(\frac{h}{w},\frac{h}{w})(s)\right\}(s,x-v(t-s),v_*,I_*)\right|dv_*dI_*d{s_1}ds\notag\\
	&\leq C\int^t_0\int^t_se^{-\nu(v,I)(t-s_1)}\int_{\R^3\times\R_+}e^{-\la(s_1-s)}k_w(v,v_*,I,I_*)\nu(v_*,I_*)\notag\\
	&\qquad\times\big\|\left\{\frac{w}{\nu}\Ga(\frac{h}{w},\frac{h}{w})\right\}(s)\big\|_\infty dv_*dI_*d{s_1}ds\notag\\
	&\leq Ce^{-\frac{\la}{4} t}\sup_{0\leq s\leq t}\|e^{\frac{\la}{4} s}h(s)\|^2_{\infty}\int^t_0\int^t_s\int_{\R^3\times\R_+}e^{-\frac{\nu(v,I)}{2}(t-s_1)}e^{-\frac{\la}{2}(s_1-s)}\notag\\
	&\qquad\times k_w(v,v_*,I,I_*)\nu(v_*,I_*)dv_*dI_*d{s_1}ds\notag\\
	&\leq Ce^{-\frac{\la}{4} t}\sup_{0\leq s\leq t}\|e^{\frac{\la}{4} s}h(s)\|^2_{\infty}.
\end{align}
In the last inequality above, we use the fact that
\begin{align}\label{J22}
	&\int^t_0\int^t_s\int_{\R^3\times\R_+}e^{-\frac{\nu(v,I)}{2}(t-s_1)}e^{-\frac{\la}{2}(s_1-s)}k_w(v,v_*,I,I_*)\nu(v_*,I_*)dv_*dI_*d{s_1}ds\notag\\
	&=\int^t_0\int_{\R^3\times\R_+}e^{-\frac{\nu(v,I)}{2}(t-s_1)}\left(\int^{s_1}_0e^{-\frac{\la}{2}(s_1-s)}ds\right)k_w(v,v_*,I,I_*)\nu(v_*,I_*)dv_*dI_*d{s_1}\notag\\
	&\leq C\int^t_0\int_{\R^3\times\R_+}e^{-\frac{\nu(v,I)}{2}(t-s_1)}k_w(v,v_*,I,I_*)\nu(v_*,I_*)dv_*dI_*d{s_1}\notag\\
	&\leq C\int_{\R^3\times\R_+}k_w(v,v_*,I,I_*)\frac{\nu(v_*,I_*)}{\nu(v,I)}dv_*dI_*\notag\\
	&\leq C.
\end{align}
The last inequality above holds by Lemma \ref{estnu} and Lemma \ref{lemmaK}.
Combining \eqref{esth1}, \eqref{J1} and \eqref{J2}, it holds that
	\begin{align*}
	\|h(t)\|_\infty\leq Ce^{-\la t}\|h_0\|_\infty+Ce^{-\frac{\la}{4} t}\sup_{0\leq s\leq t}\|e^{\frac{\la}{4} s}h(s)\|^2_{\infty}.
\end{align*}
Multiplying $e^{\frac{\la}{4} t}$ on both sides of the above inequality, we obtain
\begin{align*}
	\sup_{0\leq s\leq t}\|e^{\frac{\la}{4} s}h(s)\|_{\infty}\leq C\|h_0\|_\infty+C\sup_{0\leq s\leq t}\|e^{\frac{\la}{4} s}h(s)\|^2_{\infty}.
\end{align*}
Therefore, we obtain \eqref{GE} and Theorem \ref{global} is proven.
\end{proof}

\medskip
\noindent {\bf Acknowledgments:}\,
The research of RJD is partially supported by the General Research Fund (Project No.~14301720) from RGC of Hong Kong and a Direct Grant from CUHK. The research of ZGL is supported by the Hong Kong PhD Fellowship Scheme.


\begin{thebibliography}{99}
\bibitem{BM}
C. Baranger and C. Mouhot, Explicit spectral gap estimates for the linearized Boltzmann and Landau operators with hard potentials. {\it Rev. Mat. Iberoamericana} {\bf 21}  (2005), no.~3, 819--841.
	
\bibitem{Bernhoff1}
N. Bernhoff, Linearized Boltzmann collision operator: I. polyatomic molecules modeled by a discrete internal energy variable and multicomponent mixtures, arXiv:2201.01365, 2022.
	
\bibitem{Bernhoff}
N. Bernhoff, Linearized Boltzmann collision operator: II. polyatomic molecules modeled by a
continuous internal energy variable, arXiv:2201.01377, 2022.

\bibitem{Berroir}
A. Berroir, Contribution à la théorie cinétique des gaz polyatomiques. Première partie. {\it Annales de l'I.H.P. Physique théorique} {\bf 12}  (1970), no. 1, 1-70.

\bibitem{Bird}
G. A. Bird, {\it Molecular gas dynamics and the direct simulation of gas flows.} The Clarendon Press, New York, 1995.

\bibitem{BL}
C. Borgnakke and P.S. Larsen, Statistical collision model for Monte Carlo simulation of polyatomic gas mixture.
{\it J. Comput. Phys.} {\bf 18} (1975), no. 4, 405--420.

\bibitem{BDTP}
J.-F. Bourgat, L. Desvillettes, P. Le Tallec and B. Perthame, Microreversible collisions for
polyatomic gases and Boltzmann's theorem. {\it European J. Mech. B Fluids} {\bf 13} 
 (1994), no. 2, 237--254.

\bibitem{BST}
S. Brull, M. Shahine and P. Thieullen, Compactness property of the linearized Boltzmann operator for a diatomic single gas model. {\it  Netw. Heterog. Media} {\bf  17} (2022), no. 6, 847--861.

\bibitem{BST1}
S. Brull, M. Shahine and P. Thieullen, Fredholm property of the linearized Boltzmann operator for a polyatomic single gas model. arXiv:2208.14343, 2022.

\bibitem{CC}
S. Chapman and T.G. Cowling, {\it The Mathematical Theory of Non-Uniform Gases.} Cambridge University Press, Cambridge, 1990.

\bibitem{Desvillettes}
L. Desvillettes, Sur un modèle de type Borgnakke-Larsen conduisant à des lois d'énergie non linéaires en température pour les gaz parfaits polyatomiques. {\it Ann. Fac. Sci. Toulouse Math.} {\bf 6} (1997), no. 2, 257--262.

\bibitem{DMS}
L. Desvillettes, R. Monaco and F. Salvarani, A kinetic model allowing to obtain the energy law of polytropic gases in the presence of chemical reactions. {\it Eur. J. Mech. B Fluids} {\bf 24} (2005), no. 2, 219--236.
0

\bibitem{DL}
R.~J. DiPerna and P.-L. Lions,  On the Cauchy problem for Boltzmann equations: Global existence and weak stability. {\it Ann. of Math.} {\bf 130}  (1989), 321--366.

\bibitem{Drange}
H. B. Drange, The linearized Boltzmann collision operator for cut-off potentials. {\it SIAM J. Appl. Math.} {\bf 29}  (1975), no. 4, 665--676.

\bibitem{Duan}
R.-J. Duan, Hypocoercivity of linear degenerately dissipative kinetic equations. {\it Nonlinearity} {\bf  24} (2011), no. 8, 2165--2189.

\bibitem{DHWY}
R.-J. Duan, F.-M. Huang, Y. Wang, and T. Yang,  Global well-posedness of the Boltzmann equation with large amplitude initial data. {\it Arch. Ration. Mech. Anal.} {\bf 225}  (2017), no. 1, 375--424.

\bibitem{EP}
R. Ellis and M.~A. Pinsky,  The first and second fluid approximations to the linearized Boltzmann equation. {\it J. Math. Pures. Appl.} {\bf 54}  (1975), 125--156.

\bibitem{EG}
A. Ern and V. Giovangigli, {\it Multicomponent Transport Algorithms.} Springer, Berlin 1994.

\bibitem{GP}
I. M. Gamba and M. Pavi\'c-\v{C}oli\'c,  On the Cauchy problem for Boltzmann equation modelling a polyatomic gas.
 arXiv:2005.01017, 2020.

\bibitem{Glassey}
R.~T. Glassey, {\it The Cauchy Problem in Kinetic Theory.} Society for Industrial and Applied Mathematics (SIAM), Philadelphia, 1996.


\bibitem{Grad}
H. Grad, Asymptotic theory of the Boltzmann equation, {\it in Rarefied Gas Dynamics, edited by J. A. Laurmann.}  (Academic Press, New York, 1963), Vol. 1, pp. 26–59.


\bibitem{Guo02}
Y. Guo, The Landau equation in a periodic box. {\it  Commun. Math. Phys. } {\bf 231} (2002),  no. 3, 391--434.

\bibitem{Guo03}
Y. Guo, Classical solutions to the Boltzmann equation for molecules with an angular cutoff. {\it Arch. Ration. Mech. Anal. } {\bf 169} (2003), no.~4, 305--353.


\bibitem{Guo04}
Y. Guo, The Boltzmann equation in the whole space. {\it Indiana Univ. Math. J. } {\bf 53} (2004), no. 4, 1081–1094.


\bibitem{GuoY}
Y. Guo, Decay and Continuity of the Boltzmann Equation in Bounded Domains. {\it Arch. Rational. Mech. Anal.} {\bf 197} (2010), 713--809.  


\bibitem{Guo}
Y. Guo, Bounded solutions for the Boltzmann equation. {\it Quart. Appl. Math.} {\bf 68}  (2010), no.~1, 143--148.

\bibitem{IS}
R. Illner and M. Shinbrot.  Global existence for a rare gas in an infinite vacuum. {\it  Comm.
Math. Phys. } {\bf 95} (1984), 217--226

\bibitem{Kawashima}
S. Kawahima, Systems of a hyperbolic-parabolic composite type, with applications to the equations of magnetohydrodynamics. Thesis, Kyoto University, 1984.

\bibitem{KS}
S. Kaniel and M. Shinbrot, The Boltzmann equation I: uniqueness and local existence. {\it Comm. Math. Phys.}  {\bf 58} (1978), no.~1, 65--84.

\bibitem{Kuscer}
I. Kuščer, A model for rotational energy exchange in polyatomic gases. {\it Physica A}  {\bf 158} (1989), 784--800.

\bibitem{Li}
Z.-G. Li, Large Amplitude Solutions in $L^p_vL^{\infty}_{T}L^{\infty}_{x}$ to the Boltzmann Equation for Soft Potentials, {\it SIAM J. Math. Anal.} {\bf 54}  (2022), no. 4, 4163--4197.

\bibitem{LYY}
T.-P. Liu, T. Yang and S.-H. Yu, Energy method for Boltzmann equation. {\it Phys. D}  {\bf 188} (2004), no. 3-4, 1393--1423.

\bibitem{LY}
T.-P. Liu and S.-H. Yu, Boltzmann equation: micro-macro decompositions and positivity of shock profiles. {\it  
	Comm. Math. Phys}, {\bf 246} (2004), no. 1, 133--179.

\bibitem{MBKK}
F. R. W. McCourt, J. J. M. Beenakker, W. E. Köhler and I. Kuscer, {\it Nonequilibrium Phenomena in Polyatomic Gases.} Oxford University Press.

\bibitem{NI}
T. Nishida and K. Imai, Global solutions to the initial value problem for the nonlinear Boltzmann equation. {\it Publ. Res. Inst. Math. Sci.} {\bf 12}  (1976/77), no. 1, 229--239.

\bibitem{Nishimura}
K. Nishimura, Global existence for the Boltzmann equation in $L^r_v L^\infty_t L^\infty_x$  spaces. {\it Commun. Pur. Appl. Anal.} {\bf 18}  (2019), no.~4, 1769--1782.

\bibitem{Pullin}
D. I. Pullin, Kinetic models for polyatomic molecules with phenomenological energy exchange. {\it Phys Fluids.} {\bf 21}  (1978), no. 2, 209--216.

\bibitem{Shizuta}
Y. Shizuta, On the classical solutions of the Boltzmann equation. {\it Commun. Pure Appl. Math.} {\bf 36}  (1983), 705--754.


\bibitem{Ukai}
S. Ukai, On the existence of global solutions of mixed problem for non-linear Boltzmann equation. {\it Proc. Japan Acad. } {\bf 50} (1974), 179--184.


\bibitem{UA}
S. Ukai and K. Asano, On the Cauchy problem of the Boltzmann equation with a soft potential. {\it Publ. Res. Inst. Math. Sci. } {\bf 18}(1982), no.~4,477--519.


\bibitem{UY}
S. Ukai and T. Yang, The Boltzmann equation in the space $L^2 \cap L_\be^\infty$: Global and time-periodic solutions. {\it Anal. Appl. (Singap.) } {\bf 4}  (2006), no. 3, 263--310.

\end{thebibliography}
\end{document}